\newcommand{\R}{{\mathbb R}}
\newcommand{\N}{{\mathbb N}}
\newcommand{\E}{{\mathbb E}}
\newcommand{\PP}{{\mathbb P}}
\newcommand{\grad}{{\mathrm  {grad}\ }}
\renewcommand{\[}{\begin{eqnarray*}}
\renewcommand{\]}{\end{eqnarray*}}
\newtheorem{definition}{Definition}[section]
\newtheorem{proposition}[definition]{Proposition}
\newtheorem{lemma}[definition]{Lemma}
\theoremstyle{plain}
\newtheorem{theorem}{Theorem}[section]
\newtheorem{corollary}[definition]{Corollary}
\begin{document}

\title{ 
Local Limit Theorems and 
Strong Approximations for Robbins-Monro Procedures}
\author{Valentin Konakov   \footnote{National Research University Higher School of Economics (HSE), Russian Federation, vkonakov@hse.ru, research of Valentin Konakov for the article was prepared within the framework of the Basic Research Program at HSE University.} \and Enno Mammen  \footnote{Institute for  Mathematics, Heidelberg University, Germany, mammen@math.uni-heidelberg.de} \and  Lorick~Huang\footnote{INSA - Toulouse, Toulouse, France, {lhuang@insa-toulouse.fr} }}
\date{\today}

\maketitle

\begin{abstract}
The Robbins-Monro algorithm is a recursive, simulation-based stochastic procedure to approximate the zeros of a function that can be written as an expectation.
It is known that under some technical assumptions, Gaussian limit  theorems approximate the stochastic performance of  the algorithm.
Here, we are interested in  strong approximations for Robbins-Monro procedures. The main tool for getting them are local limit theorems, that is, studying the convergence of the density of the algorithm.
The analysis relies on a version of parametrix techniques for Markov chains converging to diffusions. The main difficulty that arises here is the fact that the drift is unbounded.
\end{abstract}

\section{Introduction}

This paper is devoted to strong approximations for Robbins-Monro procedures. The approximations are based on the study of a local limit theorem for  Robbins-Monro procedures.
These algorithms have first been introduced in \cite{robb:monr:51} to approximate the solution of an equation $h(\theta)=0$, where randomly disturbed values of $h(\theta)$ are  observed at updated points $\theta$. Since then, extensive literature have been published on the subject, but to the best of our knowledge, a local limit theorem has never been obtained. 
 We refer to the monographs   \cite{neve:khas:73}, \cite{benv:meti:prio:90}, \cite{duf:97}, and  \cite{kush:yin:03} for a general mathematical discussion of these algorithms and a review of the literature. An important class of Robbins-Monro procedures are optimisation methods based on stochastic gradient decent. There is an increasing literature of their applications in the implementation of artificial neural networks and in reinforcement learning. We refer to
\cite{der:jen:kas:23, jen:rie:22,  mey:22, mou:bac:11} and the references therein for some recent developments and overviews. For applications in statistics see also \cite{bor:che:21, ChenLee, Dieule, LiLou} 

The main idea of this paper was to study what can be obtained for the theory of Robbins-Monro procedures by using the parametrix method. The parametrix method is an approach for getting series expansions for the differences of transition densities of SDE's with variable and with constant coefficients, for a more detailed discussion see Section \ref{subsec:parametrix}. We will apply parametrix expansions to compare transition densities of Robbins-Monro procedures and their diffusion limits. These bounds can be used to get total variation bounds on the multivariate densities of the Robbins-Monro procedures and the limiting diffusion processes, evaluated on an increasing grid of points. In particular, this allows strong approximations of Robbins-Monro procedures by the limiting diffusion processes. For a further discussion of applications see Section \ref{subsec:main}.

We fix a probability space $(\Omega ,\mathcal{F} ,\PP)$ on which all random variables we consider below are defined.
Let $(\gamma_k)_{k\ge0}$ be a decreasing step sequence that will be specified later, and let $(\eta_k)_{k\ge 0}$ be a collection of independent and identically distributed random variables.
We define the following recursive procedure:
\begin{equation}\label{robbins:monro:lineaire}
\theta _{n+1}=\theta _{n}-\gamma _{n+1}H(\theta _{n},\eta_{n+1}),
\ \theta_0 \in \R^d,
\end{equation} 
where $H$ is a function from $\R^d\times \mathcal X$ to $\R^d$ with $\mathcal X$ equal to the support of $\eta_i$. Without loss of generality we can assume that $\mathcal X$ is a subset of $\R$.
Generally, the Robbins-Monro procedure is used to approximate the zeros of the function:
$h(\theta) = \E[H(\theta,\eta)]$, where $\eta$ has the same distribution as  $\eta_k$.

Even though the general theory extends to the case of multiple zeros, in this paper, we assume that $h$ has only one zero, $\theta^*$ (i.e. $h(\theta)=0$ iff $\theta =\theta^*$ ). We assume that the sequence $(\gamma_k)_{k\ge 1}$ is chosen as \begin{equation} \label{eq:gammdef}
\gamma_k=  \frac A {k^{ \beta}+ B}\end{equation} with  constants $A>0$, $B \geq 0$ and $1/2< \beta \leq 1$. For this choice  we get that
\begin{equation} \label{eq:gammconv}
\sum_{k \ge 1} \gamma_k = +\infty, \ \ \sum_{k\ge 1} \gamma_k^2 < +\infty,
\end{equation}
which is usually assumed for the step sequence $(\gamma_k)_{k\ge 1}$. Our theory can be generalized to other monotonically decreasing choices of the step sequence $\gamma_k$ as long as we have that \eqref{eq:gammconv} holds,  and that 
\begin{equation} \label{eq:sqgammconv}\frac {\sqrt {\gamma_k} -\sqrt {\gamma_{k+1}}} {(\gamma_k)^{3/2}}\to   \bar \alpha
\end{equation}
 for some constant $ \bar \alpha$. Note that for the choice \eqref{eq:gammdef} we have that $\bar \alpha =0$, if  $1/2< \beta < 1$ and $\bar \alpha =(2 A (B+1))^{-1}$, if  $ \beta = 1$.

Under appropriate assumptions, it can be shown that the convergence:
\begin{equation} \label{eq:thetaconv}
\theta_n \underset{n \rightarrow +\infty}{\longrightarrow} \theta^*, 
\end{equation}
holds almost surely, see \cite{benv:meti:prio:90}. Furthermore, Gaussian limit theorems have been proved. For a formulation of such a result we remark first that after a renormalisation the procedure \eqref{robbins:monro:lineaire} stabilizes around the solution of the following Ordinary Differential Equation (ODE):
\begin{equation}\label{EDO_h}
{\frac{d}{dt} \bar{\theta}^N_t = - h(\bar{\theta}^N_t) \text{ with initial value }  \bar{\theta}^N_0.}
\end{equation}
{Note that  $\bar \theta^N _{t}$ depends on $N$ because we allow that its initial value $\bar \theta^N _{0}$ depends on $N$.} 
Fluctuations of the Robbins-Monro algorithm should be considered with respect to the solution $(\bar{\theta}^N_t)_{t \ge0}$ of the ODE \eqref{EDO_h}.
For the defintion of the renormalisation, we consider a \textit{shift} in the indexation of the procedure that will allow us to consider $(\theta_n)_{n \ge 0}$ in a region that is close to stationarity.
Let $N\in \N$, and consider a sequence $\left( \theta _{n}^{N}\right) _{n\geq 0}=\left( \theta _{N+n}\right) _{n\geq 0}$,  of shifted Robbins-Monro algorithms.
These algorithms satisfy the following recurrence equation:
\begin{equation} \label{14}
\theta _{n+1}^{N}=\theta _{n}^{N}-\gamma _{n+1}^{N}H(\theta_{n}^{N},\eta_{n+1}^{N}) \\ 
\end{equation}
with starting value $\theta _{0}^{N} \in \R^d$, where $\eta_{n+1}^{N}=\eta_{N+n+1}$, and $\gamma _{n+1}^{N} = \gamma _{N+n+1}$. 
Set now:
\[
t_{0}^{N}=0, \ t_{1}^{N}=\gamma _{1}^{N}, \ t_{2}^{N}=\gamma
_{1}^{N}+\gamma _{2}^{N},\ \dots, \  t_k^N = \gamma_1^N + \cdots + \gamma_k^N
\]
and define for an arbitrary fixed terminal time $T>0$:
$$
M(N) = \inf\{ k \in \N \ ; \ t_k^N\ge T \}.
$$
Closeness of $\theta _{n}^{N}$ and $\bar \theta^N _{t}$ on the grid $t_{0}^{N},\ \dots, \  t_{M(N)}^N$ becomes intuitively clear because $\bar \theta^N _{t}$ is for $t=t_{n+1}^N$ close to its Euler approximation 
$${ \check \theta^N _{n+1} =\check \theta^N _{n} - \gamma_{n+1} ^N h(\check \theta^N _{n} ) \text{ with } \check \theta^N _{0} = \bar \theta^N _{0}.}$$
Note that the Robbins -Monro 
procedure can be rewritten as perturbed Euler scheme
$$ \theta^N _{n+1} =  \theta^N _{n} - \gamma_{n+1} ^N h( \theta^N _{n} )+ \varepsilon_n^N,$$
where $$\varepsilon_n^N = \theta^N _{n+1} -  \theta^N _{n}+ \gamma_{n+1} ^N h(  \theta^N _{n} ) = 
- \gamma_{n+1} ^N (H( \theta^N _{n}, \eta^N _{n+1}) - h( \theta^N _{n})).$$
The centered innovations $\varepsilon_n^N $ may be considered as "small fluctuations".

On the interval $[0,T]$ we consider  the renormalized process $U_t^N$ that is equal to :
\begin{equation}\label{proc:renor}
U_t^N= \frac{\theta_k^N - \bar{\theta}^N_{t_k^N}}{\sqrt{\gamma_k^N}}
\end{equation}
as long as $ t \in [ t_k^N , t_{k+1}^N )$.
Under our assumptions, stated in Subsection \ref{subsec:assumptions},  it can be shown that the convergence \eqref{eq:thetaconv} holds and that  the sequence of processes $\left \{ (U_t^N)_{0 \leq t \leq T}, N \geq 1\right \}$ converges weakly  to  the   solution $(X_t)_{0 \leq t \leq T}$ of the $d$-dimensional SDE:
\begin{equation}\label{EDS_limit}
\mathrm dX^i_t = \bar \alpha X^i_t \mathrm dt- \sum_{j=1} ^d \frac {\partial h_i} {\partial x_j} (\bar \theta^N_t) \cdot X^j_t \mathrm dt+  \sum_{j=1} ^d R_{ij} ^{1/2} (\bar \theta^N_t) \mathrm dW^j_t, \ i=1,...,d,
\end{equation} with $X_0 = U_0^N$
or in matrix notation 
\begin{equation}\label{EDS_limit2}
\mathrm dX_t = \left(\bar \alpha I -\mathcal Dh (\bar{\theta}^N_t)  \right)X_t \mathrm dt + R^{1/2}(\bar \theta^N_t)\mathrm dW_t,
\end{equation}
where $W$ is a $d$-dimensional Brownian motion, where for $\theta \in \R^d$ the matrix $R( \theta)$ is the covariance of $H(\theta, \eta)$, and where  
 we write  $\mathcal D h (x)= ({\grad} h_1,...,\grad h_d)^\intercal (x)= \left (\frac {\partial h_i} {\partial x_j} (x) \right) _{1 \leq i,j \leq d}$ for the $d\times d$ valued derivative of $h$ at the point $x \in \R^d$. { To simplify notation, dependence of $X$ on $N$ is not denoted in notation.} This convergence can be shown by application of results discussed in \cite{benv:meti:prio:90} or in \cite{kush:yin:03}. For a motivation of drift and diffusion factor we remark that \eqref{eq:sqgammconv} holds and $\gamma_{k}^N \to 0$ and $\gamma_{k+1}^N / \gamma_k^N \to 1$ for $N\to \infty$. 
 
 Few results on the existence of densities are available for diffusions with unbounded drift as in \eqref{EDS_limit2}. To obtain upper bounds by application of the parametrix method we need to control terms in the parametrix series, and in case of unbounded drifts this becomes a delicate problem. For drifts with sublinear growth the generalization of the parametrix method was obtained in \cite{DeckKruse}, but the method developed there fails for drifts with linear growth as in \eqref{EDS_limit2}. It seems quite plausible that a linearly growing drift is exactly the borderline case, starting from which it is necessary to introduce a forward flow corresponding to the transport of the initial condition or, equivalently, a backward flow corresponding to the transport of the terminal condition \cite{dela:meno:10, kona:meno:molc:10, menpeszha21}. The presence of forward or backward flows complicates the proof of convergence of the parametrix series. Convergence of the complete parametrix series was established after the method of majorizing diffusions appeared, which was first applied in \cite{dela:meno:10}, and then developed in \cite{bitt:kona:21, Pesce}. Diffusions with dynamics  \eqref{EDS_limit2} and unbounded drift appeared in many applied problems and, as follows from the results of this work, Robbins-Monro (RM) stochastic approximation models are one of such applications. The weak convergence  of suitable Markov chains associated with the RM procedure to Gaussian diffusions is well known. As for the convergence of transition densities and their rate of convergence, as far as we know, this work contains the first results of this kind.
 
 Our main results are stated in the next section. 
Throughout the paper $C$ denotes a positive constant that is chosen large enough. The value of the constant $C$ may vary from line to line.

\section{Main Results}

\subsection{Assumptions and outline of the paper} \label{subsec:assumptions} Throughout the paper we will make the following assumptions. 

\begin{itemize}
\item[(A1)] The innovations $\eta_1, \eta_2, ... $ are i.i.d. with some distribution $\mu$ and the step $\gamma_k$  is given by   \eqref{eq:gammdef}.

\item[(A2)] For any compact subset $Q$ of $\R^d$ there exist constants $C_Q$ and $q_Q$, possibly depending on $Q$ such that for all $\theta \in Q$
$$|H(\theta, x)| \leq C_Q ( 1 + |x|^{q_Q} ).$$

\item[(A3)]  { The function $h(\theta)=\int H(\theta, x ) \mu( \mathrm d x)$ has a unique stationary point $\theta^*$, $h(\theta^*)=0$.  The solutions $\bar \theta^N_t$ of \eqref{EDO_h} with initial values $\bar \theta^N_0$ converge to $\theta^*$, uniformly for $t \in [0,T]$. The function $h(\theta)$ is bounded and has two bounded derivatives and the function $H(\theta, x)$ is Lipschitz w.r.t{.}\ its first argument  with a constant not depending on  $x$ for $\theta$ in a neighborhood of $\theta^*$. }
\end{itemize}
{  A natural choice of  $\bar \theta^N_0$ is $\bar \theta_{NT}$, where $\bar \theta_{t}$ is a solution of 
$\frac{d}{dt} \bar{\theta}_t = - h(\bar{\theta}_t)$ with  some initial value $\bar{\theta}_0 \in \R$. In such setting, convergence and stability of a solution of the differential equation is a classical topic of mathematics. For two more recent contributions see e.g.  
\cite {Cara2001} and \cite{damak2021}.
The first paper contains results on stability based on the construction of Lyapunov functions. The latter  paper relates stability of nonlinear equations to linearized versions. In particular, one gets that $\bar{\theta}^N_t \to  \theta^*$ for $t \to \infty$ if all eigen values of $\mathcal D h(\theta^*)$ have strictly negative real parts,  see also Section 4.5.3 in Part II of \cite{benv:meti:prio:90}.}

For the next assumption we need some notation that will be used again when we define the truncated processes. We define { with $a_N = \ln( 1/\gamma_1^N)$
\begin{eqnarray*}
\chi_{N}(x) &=& \left\{\begin{array}{cc}x & \text{for } \|x\| \leq a_N, \\ a_N \frac x {\|x\|} \phi_{N}(\|x\|) & \text{for } \|x\| > a_N. \end{array}\right.\\
\phi_{N}(\|x\|) &=& \left\{\begin{array}{cl}k_N \int_{a_N}^{3 a_N- \|x\|} \exp\left( -\frac 1 {(t-a_N)(2a_N-t)}\right ) \mathrm d t& \text{for } a_N < \|x\| \leq 2a_N, \\
1 & \text{for } \|x\| \leq a_N, \\ 0  & \text{for } \|x\| >2a_N , \end{array}\right. \\
\alpha_t^N &=&\alpha_{t_k^N} ^N =  \frac{\sqrt{\gamma_k^N}-\sqrt{\gamma^N_{k+1}}}{(\gamma^N_{k+1})^{3/2}} \text { for } t_k^N \leq t < t_{k+1}^N,
\end{eqnarray*}
 where the value $k_N$ depends on $N$ and it is equal to $(\int_{a_N}^{2a_N } \exp (-(t-a_N)^{-1} (2a_N-t)^{-1} ) \mathrm d t)^{-1}= (\int_{0}^{a_N } \exp (-v^{-1} (a_N-v)^{-1} ) \mathrm d v)^{-1}$.} Thus $\phi_{N}(u)$ is continuous at $u = a_N$. We will also make use of the fact that $\phi_{N}$  is infinitely differentiable.
\begin{itemize}
\item[(A4)] { For $\xi(\theta,v) = H(\theta,v)- h(\theta)$  with $\theta \in \R^d, v \in \R$ and for $y $ in a neighborhood of 0, the centered random variables 
$\xi  ( \theta^* +  y , \eta_{k}  )$
 have densities   $f_{y}(z)$ that are  five times continuously differentiable with derivatives that are at least of polynomial decay of order $M> 2d +6$, i.e.
 for $x \in \mathbb R^d$, for $t \in [0,T]$ and  
for   multi-indices $\nu$ , $|\nu| \leq 5$ it holds with a constant $C>0$ that
\begin{eqnarray} \label{polybound}| D_z^{\nu} f_{x}(z) | \leq C Q_M(z),\end{eqnarray}
where, for  all $r > d$ the function $Q_r:\mathbb R ^d \to \mathbb R$ is defined by 
$$Q_r(z) = c_r (1 + \|z\|)^{-r},$$ with $c_r$  chosen such that 
$\int_{\mathbb R ^d} Q_r(z) \mathrm d z =1$.  Furthermore, it holds for $x,y,z \in \mathbb R^d$, $1 \leq i,j \leq d$  that 
\begin{eqnarray}
\nonumber&&\int _{\mathbb R^d} f_{y}(z) z_i \mathrm d z=0 , \\ \nonumber&&\int _{\mathbb R^d} f_{y}(z) z_i z_j\mathrm d z=R_{ij} (\theta^* + y ) , \\ 
\label{bounddeltadensity}&&| f_{x} (z)- f_{y} (z)| \leq C  \|x-y\| Q_M(z)
\end{eqnarray}
for some constant $C>0$. Here, $R_{ij} (\theta )$ are the elements of   the covariance matrix $R(\theta)$ of $H(\theta, \eta) $ which is assumed to exist.} \end{itemize}
{ We will use the notation
$$f^N_{t,x}(z) = f_x(\bar \theta_t^N - \theta^*+z)$$
for the density of 
$\xi  ( \bar \theta^N_t +  x , \eta_{k}  )$.}

\begin{itemize}
\item[(A5)]  The covariance matrix $R(\theta)$ of $H(\theta, \eta) $ has a smallest eigenvalue bounded away from 0, uniformly for $\theta$. The elements $R_{ij}$ of the covariance matrix are absolutely bounded and are Lipschitz continuous with a uniformly valid  Lipschitz constant in a tubular neighborhood of $\bar \theta^N_t$ for all $t \in \Gamma_N$ for $N$ large enough.
\end{itemize}

We shortly discuss our assumptions. Assumptions (A1)--(A3) and (A5) are similar to the assumptions used e.g. in 
 \cite{benv:meti:prio:90} for the proof of functional central limit theorems. Here, the assumptions are slightly weaker and simpler because we consider the case of i.i.d. innovations $(\eta_n)_{n\geq 1}$ whereas in  \cite{benv:meti:prio:90} the innovations are Markov chains. In the additional condition (A4) we assume that the innovations have densities which allow for derivatives  up to order 5. This is   a technical assumption that is needed for Taylor expansions of the density which are used in the parametrix method. The polynomial bound in \eqref{polybound} is used when we study convolutions of the innovation densities, see also \cite{kon:koz:men:17}. In particular, the convolutions of the upper bounds can be easily handled, see e.g. \eqref{eq:lem3cent5}, which helps in the proofs.

 In Subsection \ref{subsec:truncRobMon} we introduce a truncated version $V_t^N$ of the Robbins-Monro procedure   and show that it approximates the untruncated version $U_t^N$. More precisely, we will show that the supremum of the absolute difference of the two processes is of order $O_P(\sqrt{\gamma_1^N}) = O_P(N^{-\beta/2})$. In the following subsection \ref{subsec:main} we will state our main result. We will show that the truncated  Robbins-Monro procedure can be approximated by the diffusion  $X_t$ in the following sense. We will give bounds on the total variation distance and Hellinger distance between the transition densities of the two processes. Furthermore, we will consider the joint distributions on an increasing grid of time points for the two processes. We will state bounds on the total variation distance and Hellinger distance for the distribution of the processes on the grid. This result allows a strong approximation of the  truncated  Robbins-Monro procedure by the diffusion  $X_t$. We will give bounds on the total variation distance and Hellinger distance between the values of the truncated  Robbins-Monro procedure and the diffusion on an  increasing grid of time points. For the proof of these results 
we define a truncated modification of the diffusion  in Subsection \ref{subsec:compdiff}. For the comparison of the truncated Robbins-Monro process to the truncated diffusion we will make use of the parametrix method. How this approach can be adapted to our setting will be explained in Subsection \ref{subsec:parametrix}. Subsection \ref{subsec:compRobMon} states our result on the comparison of the truncated processes and outlines its proof.  All proofs of our results will be given in Section \ref{sec:proofs}.

In all stated lemmas, propositions and theorems of the paper we make the assumptions (A1) -- (A5). { Constants that depend on the parameters introduced in the assumptions are denoted by $C$. The value of $C$ may be different on different locations. }

\subsection{Approximation of Robbins-Monro algorithm by a truncated modification} \label{subsec:truncRobMon}
In this section we  introduce a truncated modification of $(U_t^N)_{t \ge 0}$ for which we will show uniform convergence to the untruncated version. For a discussion of practical and theoretical aspects of truncated Robbins-Monro procedures we refer to \cite{kush:yin:03}. For a motivation how we truncate $U^N_{t_{k}^N}$ we rewrite the process $U^N_{t_{k}^N}$ as specified in the following lemma.
\begin{lemma}\label{DYN:MKV:CHAIN} With 
\begin{eqnarray*}
\beta_{k+1} ^N &=&  \sqrt {\gamma^N_{k+1}}\left ( -h (\bar{\theta}^N_{t^N_k} ) - \frac {\bar{\theta}^N_{t^N_{k+1}} -\bar{\theta}^N_{t^N_k} } { \gamma^N_{k+1}} \right ) \end{eqnarray*}  the Markov chain $(U_{t_k^N}^N)$ has the following representation:
\begin{eqnarray}\label{DYN_GEN}
U^N_{t^N_{k+1}} &=& U_{t^N_k}^N + G_N(t_k^N,U_{t^N_k}^N) \gamma^N_{k+1} U_{t^N_k}^N - \sqrt{\gamma^N_{k+1}} \xi \left ( \bar{\theta}^N_{t_k^N} +  \sqrt{\gamma^N_{k}} U_{t^N_k}^N , \eta_{k+1}^N  \right ) + \beta_{k+1} ^N,
\end{eqnarray}
where
\begin{eqnarray*}
G_N(t_k^N,x)&=& \alpha_{t^N_k}^N I - \sqrt { \frac {\gamma^N_{k}}{\gamma^N_{k+1}}} \int_0^1\mathcal D h \left
(\bar{\theta}^N_{t^N_k} + \delta x \sqrt{\gamma^N_{k}} \right )   \mathrm d\delta . \end{eqnarray*}
\end{lemma}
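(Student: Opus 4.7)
The proof is a direct algebraic manipulation of the recursion \eqref{14} after inserting the change of variables \eqref{proc:renor}; no analytic estimate is needed, only bookkeeping of the various $\sqrt{\gamma^N_k}$ factors. I would proceed as follows.

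First I would write $H(\theta,v)=h(\theta)+\xi(\theta,v)$ and apply this to \eqref{14}, so that the Robbins--Monro update takes the form
\[
\theta^N_{k+1}=\theta^N_k-\gamma^N_{k+1}\,h(\theta^N_k)-\gamma^N_{k+1}\,\xi(\theta^N_k,\eta^N_{k+1}).
\]
Substituting $\theta^N_k=\bar\theta_{t^N_k}+\sqrt{\gamma^N_k}\,U^N_{t^N_k}$ inside the drift and subtracting $\bar\theta_{t^N_{k+1}}$ from both sides yields
\[
\theta^N_{k+1}-\bar\theta_{t^N_{k+1}}=\bigl(\bar\theta_{t^N_k}-\bar\theta_{t^N_{k+1}}\bigr)+\sqrt{\gamma^N_k}\,U^N_{t^N_k}-\gamma^N_{k+1}\,h\bigl(\bar\theta_{t^N_k}+\sqrt{\gamma^N_k}\,U^N_{t^N_k}\bigr)-\gamma^N_{k+1}\,\xi\bigl(\bar\theta_{t^N_k}+\sqrt{\gamma^N_k}\,U^N_{t^N_k},\eta^N_{k+1}\bigr).
\]

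Next I would add and subtract $\gamma^N_{k+1}\,h(\bar\theta_{t^N_k})$. The ``Euler-defect'' piece $\bar\theta_{t^N_k}-\bar\theta_{t^N_{k+1}}-\gamma^N_{k+1}\,h(\bar\theta_{t^N_k})$ is precisely $\sqrt{\gamma^N_{k+1}}\,\beta^N_{k+1}$ by the definition of $\beta^N_{k+1}$ in the statement. For the remaining drift increment I would invoke the fundamental theorem of calculus,
\[
h\bigl(\bar\theta_{t^N_k}+\sqrt{\gamma^N_k}\,U^N_{t^N_k}\bigr)-h\bigl(\bar\theta_{t^N_k}\bigr)=\sqrt{\gamma^N_k}\left(\int_0^1\mathcal Dh\bigl(\bar\theta_{t^N_k}+\delta\sqrt{\gamma^N_k}\,U^N_{t^N_k}\bigr)\,\mathrm d\delta\right)U^N_{t^N_k},
\]
which is legitimate since $h\in C^1$ by (A3).

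Finally I would divide throughout by $\sqrt{\gamma^N_{k+1}}$ to obtain $U^N_{t^N_{k+1}}$ on the left. The only nontrivial arithmetic step is to rewrite the pure transport term: since
\[
\frac{\sqrt{\gamma^N_k}}{\sqrt{\gamma^N_{k+1}}}=1+\frac{\sqrt{\gamma^N_k}-\sqrt{\gamma^N_{k+1}}}{\sqrt{\gamma^N_{k+1}}}=1+\alpha^N_{t^N_k}\,\gamma^N_{k+1}
\]
by the definition of $\alpha^N_{t^N_k}$, the coefficient $\sqrt{\gamma^N_k/\gamma^N_{k+1}}\,U^N_{t^N_k}$ splits as $U^N_{t^N_k}+\alpha^N_{t^N_k}\,\gamma^N_{k+1}\,U^N_{t^N_k}$. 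Collecting this linear-in-$\gamma^N_{k+1}$ contribution together with the $\mathcal Dh$-integral term produces exactly $G_N(t^N_k,U^N_{t^N_k})\,\gamma^N_{k+1}\,U^N_{t^N_k}$, while the noise contribution becomes $-\sqrt{\gamma^N_{k+1}}\,\xi(\bar\theta_{t^N_k}+\sqrt{\gamma^N_k}\,U^N_{t^N_k},\eta^N_{k+1})$ and the deterministic remainder is $\beta^N_{k+1}$. This yields \eqref{DYN_GEN}.

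There is no real obstacle: the only place to be careful is matching the powers of $\gamma^N_{k+1}$ when dividing, in particular confirming that the $\beta^N_{k+1}$ term is normalized to appear without a prefactor, and that $\sqrt{\gamma^N_k/\gamma^N_{k+1}}$ (and not $1$) multiplies the Jacobian integral inside $G_N$. The Markov property of $(U^N_{t^N_k})_k$ is then evident from \eqref{DYN_GEN} because $\eta^N_{k+1}$ is independent of $\mathcal F_k=\sigma(\eta^N_1,\dots,\eta^N_k)$.
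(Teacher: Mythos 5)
Your proof is correct and follows essentially the same algebraic route as the paper: substitute $\theta^N_k=\bar\theta_{t^N_k}+\sqrt{\gamma^N_k}\,U^N_{t^N_k}$ into \eqref{14}, split $\sqrt{\gamma^N_k/\gamma^N_{k+1}}=1+\alpha^N_{t^N_k}\gamma^N_{k+1}$, decompose $H$ as $h+\xi$ with an added/subtracted $h(\bar\theta_{t^N_k})$ so that the ODE-defect term becomes $\beta^N_{k+1}$, and apply the fundamental theorem of calculus to the drift increment. The only cosmetic difference is that the paper normalizes by $\sqrt{\gamma^N_{k+1}}$ at the start and manipulates in $U$-variables throughout, whereas you carry out the bookkeeping in $\theta$-variables and divide at the end; the content is identical.
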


The proof of the lemma will be given in Subsection \ref {sec:DYN:MKV:CHAIN}.

The representation \eqref{DYN_GEN} motivates the following truncated process $V^N_{t_{k}}$:
\begin{eqnarray*}V^N_{t^N_{k+1}} &=& V_{t^N_k}^N + F_N(t_k^N,V_{t^N_k}^N) \gamma^N_{k+1} \chi_{N} (V_{t^N_k}^N) - \sqrt{\gamma^N_{k+1}} \xi \left ( \bar{\theta}^N_{t_k^N} +  \sqrt{\gamma^N_{k}} \chi_{N} (V_{t^N_k}^N) , \eta_{k+1}^N  \right ) ,
\end{eqnarray*}
where {\begin{eqnarray} \label{eq:defFN}
F_N(t,x)&=&  \overline{\alpha}\ I - \ \int_{0}^{1}{\mathcal{D}h({\overline{\theta}}_{t}\  + \delta}\chi_{N}(x)\sqrt{\gamma_{1}^{N}}\ ) 
   \mathrm d\delta , \end{eqnarray}}
 and where $I$ is the $d \times d$ identity matrix. The term $a_N$ and the functions $\chi_{N}$ and  $\phi_{N}$ were defined after the statement of Assumption (A3).
Note that in the definition of the truncated process the term $\beta_{k+1} ^N$ is omitted. This is done because this term is asymptotically negligible, as stated in the following lemma.
\begin{lemma}\label{BETA}
It holds that:
$$
\sup_{k \geq 1} \|\beta_{k+1}^N \| \underset{N \rightarrow +\infty}{\longrightarrow} 0 .
$$
\end{lemma}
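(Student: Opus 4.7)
The plan is to recognize $\beta_{k+1}^N$ as the local truncation error of a one-step Euler discretization of the ODE \eqref{EDO_h} and to bound it using the regularity provided by assumption (A3). Concretely, I would first use the ODE to write
$$\bar{\theta}_{t^N_{k+1}} - \bar{\theta}_{t^N_k} = -\int_{t^N_k}^{t^N_{k+1}} h(\bar{\theta}_s)\, ds,$$
so that, since $t^N_{k+1}-t^N_k = \gamma^N_{k+1}$, the divided difference
$(\bar{\theta}_{t^N_{k+1}} - \bar{\theta}_{t^N_k}) / \gamma^N_{k+1}$ is the time-average of $-h(\bar{\theta}_s)$ over $[t^N_k, t^N_{k+1}]$. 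Subtracting $-h(\bar{\theta}_{t^N_k})$, the expression in parentheses defining $\beta^N_{k+1}$ becomes
$$\frac{1}{\gamma^N_{k+1}}\int_{t^N_k}^{t^N_{k+1}}\bigl[h(\bar{\theta}_s) - h(\bar{\theta}_{t^N_k})\bigr]\, ds.$$

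Next I would invoke (A3): $h$ has two bounded derivatives, hence is globally Lipschitz with some constant $L$, and, because $\bar\theta$ solves the ODE on the compact interval $[0,T]$, the norm $\|h(\bar{\theta}_s)\|$ is uniformly bounded by some constant $K$ for $s\in[0,T]$. Consequently $\|\bar{\theta}_s-\bar{\theta}_{t^N_k}\| \leq K\,|s-t^N_k|$, so the integrand above is bounded by $LK\,(s-t^N_k)$ and the integral by $\tfrac{LK}{2}(\gamma^N_{k+1})^2$. Dividing by $\gamma^N_{k+1}$ yields a bound of order $\gamma^N_{k+1}$ for the bracket, and multiplying by $\sqrt{\gamma^N_{k+1}}$ gives
$$\|\beta^N_{k+1}\| \leq C\,(\gamma^N_{k+1})^{3/2}$$
for a constant $C$ independent of $k$ and $N$, valid for $0 \leq k \leq M(N)-1$.

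Since $\gamma^N_{k+1} \leq \gamma^N_1 = A/((N+1)^\beta + B) \to 0$ as $N\to\infty$, the bound tends to zero, and in fact does so uniformly in $k$ within the relevant index range $\{0,\dots,M(N)-1\}$. This uniform version is slightly stronger than what the statement literally asserts, but it is the form actually needed for the subsequent comparison between $U^N_t$ and the truncated process $V^N_t$. There is no genuine obstacle here; the only point to watch is that the Lipschitz constant of $h$ and the bound on $\bar{\theta}$ must be taken uniformly over $[0,T]$, which is immediate from (A3) and the compactness of the time interval.
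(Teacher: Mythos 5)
Your proof is correct and follows essentially the same Euler local truncation error argument as the paper, arriving at the identical bound $\|\beta_{k+1}^N\| \leq C(\gamma_{k+1}^N)^{3/2}$ via the integral representation of $\bar\theta_{t_{k+1}^N}-\bar\theta_{t_k^N}$ and the Lipschitz property of $h$. The only difference is cosmetic: the paper derives boundedness of $h$ and $\mathcal{D}h$ along the trajectory from Lyapunov stability of $\theta^*$ (implicitly invoking that $\|\theta_0-\theta^*\|$ is small), whereas you obtain it directly from compactness of $[0,T]$ and the boundedness in (A3), which is cleaner and needs fewer hypotheses.
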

For a proof of the lemma see Subsection \ref{sec:BETA}. 
Before we will come in the next subsection to the statement of our main result we will compare  the truncated and the untruncated Robbins-Monro algorithm in the following proposition. {
\begin{proposition}\label{TRUNC}
For $C>0$ large enough it holds for $\frac{1}{2} < \beta < \frac{2}{3}$ and for $\beta=1$ that:
$$
\mathbb P \left (\sup_{1 \leq k \leq M(N)} \left \| U_{t_k^N}^N - V_{t_k^N}^N \right \| > C N^{- \beta/2}\right ) \underset{N \rightarrow +\infty}{\longrightarrow} 0 .
$$
Furthermore, for $\frac{2}{3} \leq \beta < 1$ it holds for $C>0$ large enough
$$
\mathbb P \left (\sup_{1 \leq k \leq M(N)} \left \| U_{t_k^N}^N - V_{t_k^N}^N \right \| > C N^{- (1 - \beta)}\ln N\right ) \underset{N \rightarrow +\infty}{\longrightarrow} 0 .
$$
\end{proposition}}
For a proof of the proposition see Subsection \ref{sec:TRUNC}.
\subsection{Main result} \label{subsec:main}

The following theorem compares the truncated Robbins-Monro procedure and the diffusion process. It states a bound for the difference in total variation norm and Hellinger norm for the  transition densities $p_N$ of the truncated Robbins-Monro procedure $V^N_t$  and the transition density $q$ of the diffusion process $X_t$. For $x,z \in \R^d$ and  $0 \leq s < t $ we denote the conditional density of $X_t$ at $z$ given $X_s=x$ by $q(s,t,x,z)$ and for $x,z \in \R^d$ and $s,t \in \{t_{0}^{N},\ \dots, \  t_{M(N)}^N\}$ with $s< t$ we write $p_N(s,t,x,z)$ for the conditional density of $V_{t}^N$ at $z$ given $V_{s}^N=x$.

\begin{theorem}\label{theo:main} { There exists $N_0 >0$ such that for $N \geq N_0$}, for $s,t \in \{t_{0}^{N},\ \dots, \  t_{M(N)}^N\}$ with $s< t$ and $x \in \R^d$ with $|x| \leq a_N/2$  and $a_N = \ln( 1/\gamma_1^N)$ it holds that \begin{eqnarray*}
&& \int_{\R^d} |p_N - q|(s,t,x,z) \mathrm d z    \\
&& \qquad \leq C   
\left\{\begin{array}{cc}(\ln N)^2 N^{-\beta/2} & \text{if } 1/2 < \beta \leq 2/3, \\  N^{-(1-\beta)} &\text{if } 2/3 < \beta < 1,\\ (\ln N)^2 N^{-1/2} & \text{if }  \beta =1.\end{array}\right.
 \end{eqnarray*}
\end{theorem} 

The theorem follows directly by application of Proposition \ref{prop:compdiff} in Subsection \ref{subsec:compdiff}  and Theorem \ref{prop:comptrunc} in Subsection \ref{subsec:compRobMon}. Note that because of $\int_{\R^d}( \sqrt{p_N} -\sqrt {q}) ^2 (s,t,x,z) \mathrm d z  \leq    \int_{\R^d} |p_N - q|(s,t,x,z) \mathrm d z$ we get also a bound for the Hellinger norm.

{We shortly discuss the rates in the upper bound of Theorem \ref{theo:main}. We do this for the choice $s=0$ and $t=T$. For our definition \eqref{eq:gammdef} of $\gamma_k$ one can show that $M(N)$ is of order $N^\beta$. Thus we have that $V_T^N$ is a statistic of i.i.d. variables $\eta_1,...,\eta_m$ with $m$ of order $O(N^\beta)$. This motivates that the rate of convergence is not faster than of order $O(N^{-\beta/2})$, a rate, which we would expect for statistics in the domain of central limit theorems. Up to a logarithmic factor we get this rate for $1/2 < \beta \leq 2/3$ and for $\beta=1$. Thus one would expect that, up to the logarithmic factor, the rate of the theorem is optimal for this values of $\beta$. If we will compare the transition densities of the truncated and untruncated diffusion processes in Theorem \ref{prop:comptrunc}  we will have an additional term of order  $O(N^{-(1-\beta)})$ for $1/2 < \beta < 1$. Now, for $ 2/3 < \beta < 1$ this term dominates the $O(N^{-\beta/2})$ term. We do not know if the rate $O(N^{-(1-\beta)})$ is sharp for the error caused by our choice of truncation or if one could get a better result than the bound of Theorem \ref{prop:comptrunc} by choosing another truncation. At least one could expect this because of the jump in the rate at $\beta=1$. But anyway, at this stage we cannot say anything about the optimality of the order of the bound of Theorem \ref{theo:main} for $ 2/3 < \beta < 1$.}

These  results can also be used  for getting a result on the distributions of the truncated Robbins-Monro procedure and the diffusion process on an increasing grid of time points $\tau^N_0=0 < \tau^N_1 < ...< \tau^N_{m_N}$ with $ \tau^N_1, ..., \tau^N_{m_N} \in \{ t_{0}^{N},\ \dots, \  t_{M(N)-1}^N\}$ and $\tau^N_{m_N} =t_{M(N)-1}^N$, { where $m_N$ is some sequence of natural numbers.} For $m_N \geq 1$, $z_1,...,z_{m_N},x \in \R^d$ put $z=(z_1,...,z_{m_N})$ and denote the conditional distribution of $(X_{\tau^N_j}:1 \leq j \leq m_N)$ given $X_0 = x$ and of $(V^N_{\tau^N_j}:1 \leq j \leq m_N)$ given $V^N_{0} = x$ by $Q_{x}^{m_N}$ or $P_{N,x}^{m_N}$, respectively.  We get the following corollary of Proposition \ref{prop:compdiffcorr} and Proposition \ref{prop:comptrunccorr} for the L$_1$-distance between these measures. 
\begin{corollary}\label{cor:main} { Suppose that for some sequence $m_N \to \infty$ and grid points $\tau^N_0=0 < \tau^N_1 < ...< \tau^N_{m_N}$ we have that 
\begin{eqnarray} \label{eq:tau}  C^{-1}  m_N^{-1} \leq  {|\tau^N_{j} - \tau^N_{j-1}|} \leq C m_N^{-1} \text{ for }1 \leq j \leq m_N, \quad  N^{-\beta}  m_N   (\ln N)^{4} \to 0.\end{eqnarray}}
With a measure  $\nu$ that dominates $Q_{x}^{m_N}$ and $P_{N,x}^{m_N}$ it holds for $x \in \R^d$ with $|x| \leq a_N/2$ that 
\begin{eqnarray*}
&&\int \left | \frac { \mathrm d Q_{x}^{m_N}}{\mathrm d \nu } 
-  \frac {  \mathrm d P_{N,x}^{m_N}}{ \mathrm d \nu }
\right | \mathrm d \nu \leq C \delta_N , \end{eqnarray*}
where 
\begin{eqnarray*}
&& \delta _N =  m_N^{1/4} \left (I_{\{\frac 1 2 <\beta<1\}} N^{-(1-\beta)/2} +  \sqrt{\ln(N)}N^{-\beta/4}\right )\\
&& \qquad \qquad + m_N (\ln N)^2 N^{-\beta/2}. 
\end{eqnarray*}\end{corollary}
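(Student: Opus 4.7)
My plan is to combine the single-transition bound of Theorem \ref{theo:main} with a standard tensorization argument, controlling the joint $L_1$-distance by means of the chain rules for both the squared Hellinger distance and for the $L_1$-distance itself. Writing $\Delta_j = \tau^N_j - \tau^N_{j-1}$, subadditivity of the squared Hellinger distance for sequentially defined measures, used together with the Markov property of both $V^N$ and $X$, yields
\begin{equation*}
H^2(Q_x^{m_N}, P_{N,x}^{m_N}) \leq \sum_{j=1}^{m_N} \E\Bigl[ H^2\bigl( q(\tau^N_{j-1},\tau^N_j, Z_{j-1}, \cdot),\, p_N(\tau^N_{j-1},\tau^N_j, Z_{j-1}, \cdot) \bigr) \Bigr],
\end{equation*}
where the expectation is over a coupled value $Z_{j-1}$ of the chain at time $\tau^N_{j-1}$. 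Theorem \ref{theo:main} supplies, whenever $|Z_{j-1}| \leq a_N/2$, a single-step bound of the form $C \Delta_j^{1/2}\, I_{\{\frac12<\beta<1\}}(\gamma_1^N)^{\beta^{-1}-1} + C \ln(1/\gamma_1^N)^2 (\gamma_1^N)^{1/2}$ on both $H^2$ and $L_1$. Summing, applying Cauchy-Schwarz in the form $\sum_j \Delta_j^{1/2} \leq \sqrt{m_N T}$, and converting to $L_1$ via $L_1 \leq 2H$ together with $\sqrt{a+b} \leq \sqrt{a}+\sqrt{b}$, produces the two $m_N^{1/4}$ summands in $\delta_N$.

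The remaining term $m_N \ln(1/\gamma_1^N)^2 (\gamma_1^N)^{1/2}$ is obtained instead from the direct $L_1$ chain rule $\int |dQ_x^{m_N} - dP_{N,x}^{m_N}| \leq \sum_j \E\bigl[\int |p_N - q|\bigr]$ by summing the $\Delta_j$-independent part of the Theorem \ref{theo:main} bound over $m_N$ steps. In effect one keeps, summand-by-summand, the smaller of the two chain-rule bounds. The whole programme is decomposed in the paper as a triangle-inequality combination of Proposition \ref{prop:compdiffcorr} (comparing $X_t$ to a truncated diffusion on the grid) and Proposition \ref{prop:comptrunccorr} (comparing the truncated diffusion to $V_t^N$), each internally running the tensorization just described.

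To apply Theorem \ref{theo:main} inside the expectation, one must handle its restriction to $|x| \leq a_N/2$. I would localize the recursion to the event $\mathcal E_N = \{|V^N_{\tau^N_j}| \vee |X_{\tau^N_j}| \leq a_N/2 \text{ for all } j = 0,\ldots, m_N\}$, so that the complementary contribution is absorbed in $\PP(\mathcal E_N^c)$. For $V^N$ the truncation by $\chi_N$ already forces $|V^N_{\tau^N_j}| \leq a_N+1$, and Assumptions (A3), (A5) combined with stochastic-stability estimates for the perturbed Euler scheme of Lemma \ref{DYN:MKV:CHAIN} deliver a uniform tail bound with $\PP(|V^N_{\tau^N_j}| > a_N/2)$ of smaller order than $m_N^{-1}\delta_N$; for the diffusion $X_t$ the analogous bound follows from classical Gaussian tail estimates for \eqref{EDS_limit2}. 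A union bound over the $m_N$ grid points renders $\PP(\mathcal E_N^c)$ negligible relative to $\delta_N$.

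The main obstacle, I expect, lies in ensuring that the single-step bounds of Theorem \ref{theo:main} are genuinely uniform over admissible conditioning values: the implicit constant must not deteriorate as $|Z_{j-1}|$ grows up to the truncation threshold $a_N/2$, a point that is particularly sensitive in the parametrix error terms depending on the distance between the starting point and $\bar\theta_{\tau^N_{j-1}}$. Once this uniformity is secured, verifying that $\delta_N$ has the claimed asymptotic order under assumption \eqref{eq:tau} — notably the growth condition $N^{-\beta} m_N \ln(N)^4 \to 0$ — is a routine bookkeeping calculation using $\gamma_1^N \asymp N^{-\beta}$.
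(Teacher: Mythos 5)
You correctly identify the high-level plan (tensorize single-step bounds via the Hellinger and $L_1$ chain rules), and you correctly identify the decomposition the paper actually uses (triangle inequality through $Q_{N,x}^{m_N}$, then Propositions~\ref{prop:compdiffcorr} and~\ref{prop:comptrunccorr}). However, the direct tensorization sketched in your first paragraph does \emph{not} produce the claimed $\delta_N$, and the "keep the smaller of the two chain-rule bounds summand-by-summand" heuristic in your second paragraph is not a valid argument for the direct comparison $Q_{x}^{m_N}$ vs.\ $P_{N,x}^{m_N}$.

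Concretely: applying the Hellinger chain rule directly between $Q_{x}^{m_N}$ and $P_{N,x}^{m_N}$ and plugging in Theorem~\ref{theo:main} yields
$H^2(Q_{x}^{m_N}, P_{N,x}^{m_N}) \le C\sqrt{m_N}\, I_{\{\frac12<\beta<1\}}(\gamma_1^N)^{\beta^{-1}-1} + C\, m_N \ln^2(1/\gamma_1^N)\sqrt{\gamma_1^N}$,
because the second summand of Theorem~\ref{theo:main} carries no $(t-s)^{1/2}$ factor and therefore sums linearly. Taking the square root and using $L_1 \le 2H$ gives the extra term $C\, m_N^{1/2}\ln(1/\gamma_1^N)(\gamma_1^N)^{1/4}$, not the stated $m_N^{1/4}\sqrt{\ln(1/\gamma_1^N)}(\gamma_1^N)^{1/4}$. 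Their ratio is $(m_N\ln^2(1/\gamma_1^N))^{1/4} \to \infty$, and one can check (take $m_N = N^\mu$ with $\beta/2<\mu<\beta$, admissible under~\eqref{eq:tau}) that this extra term is also asymptotically larger than $m_N\ln^2(1/\gamma_1^N)(\gamma_1^N)^{1/2}$. So the direct route is genuinely weaker. You cannot repair this by switching to the $L_1$ chain rule for the troublesome summands: the two chain rules act on the whole product measure, not term by term.

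What makes the paper's bound achievable is the intermediate measure $Q_{N,x}^{m_N}$: the Hellinger chain rule is applied only to the pair $(Q_{x}^{m_N}, Q_{N,x}^{m_N})$, where \emph{both} pieces of the one-step bound (Proposition~\ref{prop:compdiff}) carry a $(t-s)^{1/2}$ factor, so tensorizing gives $H^2 \lesssim \sqrt{m_N}(\dots)$ and hence $L_1 \lesssim m_N^{1/4}(\dots)$; while the pure $L_1$ chain rule (the telescoping argument in Proposition~\ref{prop:comptrunccorr}) is applied to the pair $(Q_{N,x}^{m_N}, P_{N,x}^{m_N})$, whose one-step bound (Theorem~\ref{prop:comptrunc}) has no $(t-s)$ dependence, giving linear-in-$m_N$ growth but at the smaller rate $\sqrt{\gamma_1^N}\ln^2$. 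The triangle inequality then produces exactly $\delta_N$. Your localization to the event $\{|V^N_{\tau^N_j}|\vee|X_{\tau^N_j}| \le a_N/2\}$ mirrors the tail estimates used in the proof of Proposition~\ref{prop:compdiffcorr}, so that part is on the right track.
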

In particular, we have that the upper bound $ \delta _N$ in the corollary converges to 0 if $m_N$ is of the form $m_N = C N^\mu$ with $\mu <  \beta/2$ for $\frac 1 2 <\beta\leq \frac 4 5 $, $\mu < 2(1- \beta)$ for $\frac 4 5 \leq \beta<1$ and $\mu < 1/2$ for $\beta=1$. 

The corollary can be used to get a strong approximation result. It is well known that for two probability densities $f$ and $g$ with respect to a measure $\nu$ one can construct random variables $X$ and $Y$ with densities $f$ or $g$, respectively, such that $\mathbb P\left ( X=Y\right ) = \int \text{min}(f,g) \mathrm{d}\nu$. Thus
Corollary \ref{cor:main} implies that on a large enough probability space we can construct versions of 
$V^N_t$ and $X_t$ \begin{equation} \label{strongappgrid} \mathbb P\left ( V^N_{\tau^N_j} = X_{\tau^N_j}: 0 \leq j \leq m_N\right ) \geq 1 - C \delta_N,\end{equation} 
{ see also formula (6.11) in \cite{Vill2006}.}
{ For illustration let us consider the case where $\gamma_k = k^{-1}$, i.e. we choose $A=1, B=0$ and $\beta=1$ in \eqref{eq:gammdef}. 
Choose $m_N$ and $N$ such that $m_N ( \ln N)^2 (N) ^{-1/2} \to 0$. {We now show that Corollary \ref{cor:main} can be used to get strong approximations for $(\theta_{k_0^N}= \theta_N, \theta_{k_1^N},...,\theta_{k_{m_N-1}^N},$ $\theta_{k_{m_N}^N} = \theta_{2 N})$, where $k_0^N, k_1^N, ..., k_{m_N-1}^N,k_{m_N}^N $ is an approximately equidistant grid of $[N,2N]$ with $k_0^N =N$ and $k_{m_N}^N=2N$, more precisely we choose $k_j^N$ such that $|k_j^N- N - (j/m_N) N| \leq 1/2$ for $j=1,...,m_N$. We now apply our theory with  $\tau_j^N $ equal to partial sums of harmonic series $ \frac 1 {N+1} + ... + \frac 1 {k_j^N}$ which is approximately equal to $ \ln( \frac  {k_j^N} N)$. One can show that 
the partition $\tau_j^N: j=0,...,m_N$ of the interval $[0,\ln 2]$ fulfills the condition \eqref{eq:tau}. With the help of \eqref{strongappgrid} we get that for each $N$ there exists a diffusion $X_t: t \in [0,\ln 2]$  such that 
$$\mathbb P \left ( V_{\tau_j^N}^N= X_{\tau_j^N} \text{ for } j \in \{N, k_1^N, ..., k_{m_N-1}^N, 2N\} \right ) \to 1.$$ Now Proposition \ref{TRUNC} gives with some constant $C$
$$\mathbb P \left ( \|U_{\tau_j^N}^N- X_{\tau_j^N}\|\leq C N^{-1/2}  \text{ for } j \in \{N, k_1^N, ..., k_{m_N-1}^N, 2N\} \right ) \to 1.$$ 

Using the definition of $U_t^N$ this implies that
$$\mathbb P \left ( \left \| \theta _{t_j^N} - \left [ \bar \theta^N _{\tau_j^N} + (N+j)^{-1/2} X_{\tau_j^N}\right] \right \|\leq C N^{-1}  \text{ for } j \in \{N, k_1^N, ..., k_{m_N-1}^N, 2N\} \right ) \to 1.$$}
Thus we get a strong approximation of the Robbins-Monro sequence on a grid with an increasing number of elements. 
This strong approximation result has some applications when termination or uncertainty quantifications of a Robbins-Monro procedure is chosen depending on the development of the past values of the procedure.  }
Other possible applications of Corollary \ref{cor:main} are proofs of  local invariance principles, that is, of the convergence in total variation for a wide class of stochastic functionals of the Robbins-Monro procedure to the functionals of a limiting diffusion process. This application will be discussed in another publication. The proof of such a result would   be based on the results of this paper and the stratification method developed in the papers of Y.\ Davydov, see \cite{dav:80a, dav:83, dav:80b, dav:81}. Furthermore, our results allow to get results that hold uniformly for applications to unbounded test functions as e.g. moments. Finally,  one could use the results the other way around: how well can diffusions be approximated by Robbins-Monro series.

\subsection{Comparison of truncated and untruncated diffusion}\label{subsec:compdiff}  
In this section we will compare $X_t$ with a smoothly truncated modification $X_t^N$ defined by
$$\mathrm d X_t^N = F_N(t, X_t^N) \chi_{N}(X_t^N) \mathrm d t + R^{1/2} ( \bar \theta^N_t) \mathrm d B_t$$
with $F_N$ defined in \eqref{eq:defFN}. The  conditional density of $X^N_t$ at $z$ given $X^N_0=x$ is denoted by $q_N(0,t,x,z)$.

The following proposition states a bound for the difference of the two densities in total variation norm.

\begin{proposition}\label{prop:compdiff} { There exists $N_0 >0$ such that for $N \geq N_0$}, for $0 \leq t \leq T$ and $x \in \R^d$ with $|x| \leq a_N/2$ it holds that \begin{eqnarray*}
&&\int_{\R^d}( \sqrt{q_N} -\sqrt {q}) ^2 (s,t,x,z) \mathrm d z  \leq    \int_{\R^d} |q_N - q|(s,t,x,z) \mathrm d z    \\
&& \qquad \leq C  (t-s)^{1/2} \left( I_{\{\frac 1 2 <\beta<1\}} (\gamma_1^N)^{\beta^{-1}-1} +  \ln(1/\gamma_1^N) (\gamma_1^N)^{1/2}\right ) \\
&& \qquad \leq C  (t-s)^{1/2} \left( I_{\{\frac 1 2 <\beta<1\}} N^{-(1-\beta)} +  (\ln N) N^{-\beta/2}\right ). \end{eqnarray*}
\end{proposition}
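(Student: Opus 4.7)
The plan is to exploit that the diffusions $X$ and $X^N$ share the diffusion matrix $R^{1/2}(\bar\theta_\cdot)$ and differ only in drift, so Girsanov's theorem can be applied. Setting $b(u,y)=(\bar\alpha I-\mathcal Dh(\bar\theta_u))y$, $b_N(u,y)=F_N(u,y)\chi_N(y)$ and $\varphi(u,y)=R^{-1/2}(\bar\theta_u)(b_N-b)(u,y)$, I would realise $X$ and $X^N$ on a common probability space: $X$ under $\mathbb P$, $X^N$ under the measure $\mathbb P^N$ whose density with respect to $\mathbb P$ on $[s,t]$ is $Z_t=\exp\bigl(\int_s^t\varphi(u,X_u)\cdot\mathrm d W_u-\tfrac12\int_s^t\|\varphi(u,X_u)\|^2\,\mathrm d u\bigr)$. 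Novikov's condition is verified using that $\|R^{-1/2}\|\le C$ (by (A5)), that $\varphi$ grows at most linearly in $y$ on $\{\|y\|\le a_N\}$ (see below), and that $X_u$ has Gaussian marginals. Combining the standard inequalities $\int(\sqrt{q_N}-\sqrt q)^2\,\mathrm d z\le\int|q_N-q|\,\mathrm d z\le\sqrt{2\,\mathrm{KL}(\mathbb P\|\mathbb P^N)}$ (Pinsker applied to the time-$t$ marginals via the data-processing inequality) with the Girsanov identity $\mathrm{KL}(\mathbb P\|\mathbb P^N)=\tfrac12\E\bigl[\int_s^t\|\varphi(u,X_u)\|^2\,\mathrm d u\bigr]$ reduces the problem to bounding this integrated second moment.

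The drift difference is expanded on $\{\|y\|\le a_N\}$, where $\chi_N(y)=y$. With $r^N_k=\sqrt{\gamma^N_k/\gamma^N_{k+1}}$,
\begin{equation*}
(b_N-b)(u,y)=(\alpha^N_u-\bar\alpha)y-(r^N_k-1)\mathcal Dh(\bar\theta_u)y-r^N_k\int_0^1\bigl[\mathcal Dh(\bar\theta_u+\delta y\sqrt{\gamma^N_k})-\mathcal Dh(\bar\theta_u)\bigr]y\,\mathrm d\delta .
\end{equation*}
The three terms are estimated separately. Taylor-expanding the step $\gamma_j=A/(j^\beta+B)$ together with \eqref{eq:sqgammconv} gives $|\alpha^N_u-\bar\alpha|\le C(\gamma_1^N)^{\beta^{-1}-1}$ for $1/2<\beta<1$ and a strictly smaller order for $\beta=1$; $|r^N_k-1|=O((\gamma_1^N)^{1/\beta})$ is of strictly smaller order and absorbed; the Hessian-type term is bounded by $C\sqrt{\gamma^N_k}\|y\|^2$ using the boundedness of $\mathcal D^2h$ from (A3). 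On the complement $\{\|y\|>a_N\}$, the bounds $\|\chi_N(y)\|\le a_N$ and $\|b(u,y)\|\le C\|y\|$ combine with the Gaussian tail of $X_u$ to absorb the contribution.

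To combine, note that $X_u|X_s=x$ is Gaussian with mean $\Phi(u,s)x$, where $\Phi$ is the fundamental matrix of the linear drift (uniformly bounded on $[0,T]$), and with bounded covariance. Hence for $|x|\le a_N/2$ one has $\E\|X_u\|^m\le Ca_N^m$ ($m=2,4$) and $\mathbb P(\|X_u\|>a_N)\le C\exp(-ca_N^2)$, so
\begin{equation*}
\E\|\varphi(u,X_u)\|^2\le C\bigl[(\alpha^N_u-\bar\alpha)^2+(\gamma^N_1)^{2/\beta}\bigr]\,\E\|X_u\|^2+C\gamma^N_k\,\E\|X_u\|^4+\text{(exp. negligible)}.
\end{equation*}
Integrating over $[s,t]$ and taking a square root then delivers a bound of the order $(t-s)^{1/2}\bigl(\mathbf 1_{\{1/2<\beta<1\}}(\gamma_1^N)^{\beta^{-1}-1}+\ln(1/\gamma_1^N)(\gamma_1^N)^{1/2}\bigr)$, the logarithmic factors $a_N=\ln(1/\gamma_1^N)$ coming from the moment bounds on $X_u$. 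The hardest point is controlling the unbounded linear growth of $b(u,y)$: both the verification of Novikov's condition and the tail estimate on $\{\|X_u\|>a_N\}$ hinge on the choice $a_N=\ln(1/\gamma_1^N)$ being matched to the Gaussian decay $\exp(-ca_N^2)$, and the identification of the sharp exponent $\beta^{-1}-1$ in the first term requires a careful Taylor analysis of $\gamma_k=A/(k^\beta+B)$ that turns the normalisation constant $\bar\alpha$ from \eqref{eq:sqgammconv} into the driver of the rate.
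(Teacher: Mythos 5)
Your Girsanov$+$Pinsker route is a genuine variant of the paper's argument, and it reduces to exactly the same integrated quantity. The paper applies Corollary 1.2 of Bogachev--R\"ockner--Shaposhnikov [BRS16] directly to the Fokker--Planck equations, which gives
\[
\Bigl(\int_{\R^d}|q_N-q|(s,t,x,z)\,\mathrm dz\Bigr)^2 \le C\int_s^t\!\int_{\R^d}\bigl|R^{-1/2}(\bar\theta_u)\bigl([\bar\alpha I-\mathcal Dh(\bar\theta_u)]y-F_N(u,y)\chi_N(y)\bigr)\bigr|^2\,q(s,u,x,y)\,\mathrm dy\,\mathrm du,
\]
which is the same right-hand side you obtain from $\mathrm{KL}(\mathbb P\|\mathbb P^N)=\tfrac12\E\int_s^t\|\varphi(u,X_u)\|^2\,\mathrm du$ plus Pinsker and data processing. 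After that point your decomposition of the drift difference into the $(\alpha^N_u-\bar\alpha)$ term, the $(r^N_k-1)$ term, the Hessian term, and the tail contribution on $\{\|y\|>a_N\}$ matches the paper's, including the identification of the exponent $\beta^{-1}-1$ and the role of $a_N=\ln(1/\gamma_1^N)$. What the [BRS16] corollary buys over Girsanov is that it only requires square integrability of $R^{-1/2}(b_N\chi_N-b)$ against $q\,\mathrm dy\,\mathrm du$, so no exponential-moment condition is needed; this neatly sidesteps the issue you gloss over with Novikov.

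That Novikov verification is the one genuine soft spot in your write-up. On $\{\|y\|\le a_N\}$ the Hessian term gives $\|\varphi(u,y)\|^2\sim\gamma_1^N\|y\|^4$, and $\E\exp(c\gamma_1^N\|X_u\|^4)=\infty$ for any $c>0$ when $X_u$ is Gaussian, so Novikov as stated fails. It can be rescued because $\gamma_1^N\|y\|^4\le\gamma_1^Na_N^4\to0$ uniformly on the truncation region, and the genuinely unbounded part $\|y\|^2\,\I_{\{\|y\|>a_N\}}$ only forces finiteness of $\E\exp(c(t-s)\|X_u\|^2)$, which holds for $t-s$ small and can then be bootstrapped by splitting $[s,t]$ and using the Markov property; but you should say this explicitly. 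A second, milder caveat: tracking $\E\|X_u\|^2\lesssim a_N^2$ and $\E\|X_u\|^4\lesssim a_N^4$ through your final estimate produces an extra power of $a_N=\ln(1/\gamma_1^N)$ beyond the one in the stated bound. The paper's own write-up of $(3.17)$--$(3.19)$ is also terse on exactly this bookkeeping, so the discrepancy is in the logarithmic slack rather than the polynomial rate, but you should be aware that the moment bounds as you wrote them do not reproduce the displayed constant exactly.
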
 

{ Here in the last inequality we used that  $C^{-1} N ^{-\beta} \leq \gamma_1^N \leq C N ^{-\beta}$ which directly follows from \eqref{eq:gammdef}. The proof of the other inequalities }of the proposition will be given in Section \ref{sec:compdiff}.  The proposition can be used for getting a result on the distributions of the diffusions on an increasing grid of time points. With $m_N \geq 1$, $z_1,...,z_{m_N},x \in \R^d$, $z$, $\tau^N_j$ and $Q_{x}^{m_N}$ defined as in the last subsection, denote the conditional distribution of $(X_{N,t_j}:1 \leq j \leq m_N)$ given $X_{N,0} = x$ by $Q_{N,x}^{m_N}$.  We get the following corollary of Proposition \ref{prop:compdiff} for the Hellinger distance and L$_1$-distance between the measures $Q_{x}^{m_N}$ and $Q_{N,x}^{m_N}$. The proof of this result can also be found in Section \ref{sec:compdiff}. 

\begin{proposition}\label{prop:compdiffcorr} Suppose that \eqref{eq:tau} holds. With a measure  $\nu$ that dominates $Q_{x}^{m_N}$ and $Q_{N,x}^{m_N}$ it holds { for $N \geq 1$}, for $x \in \R^d$ with $|x| \leq a_N/2$ that 
\begin{eqnarray*}
&&\frac 1 2 \int \left | \frac { \mathrm d Q_{x}^{m_N}}{\mathrm d \nu } 
-  \frac {  \mathrm d Q_{N,x}^{m_N}}{ \mathrm d \nu }
\right | \mathrm d \nu \leq  H (Q_{x}^{m_N},Q_{N,x}^{m_N})   \\
&& \qquad \leq C  m_N^{1/4} \left (I_{\{\frac 1 2 <\beta<1\}} (\gamma_1^N)^{(\beta^{-1}-1)/2} +  \sqrt{\ln(1/\gamma_1^N)} (\gamma_1^N)^{1/4}\right ) \\
&& \qquad \leq C  m_N^{1/4} \left (I_{\{\frac 1 2 <\beta<1\}} N^{-(1-\beta)/2} +  \sqrt{\ln(N)}N^{-\beta/4}\right ). \end{eqnarray*}\end{proposition}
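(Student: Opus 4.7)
The left inequality $\tfrac 1 2 \int |dQ_{x}^{m_N}/d\nu - dQ_{N,x}^{m_N}/d\nu| \, d\nu \leq H(Q_{x}^{m_N}, Q_{N,x}^{m_N})$ is the standard Cauchy--Schwarz bound between $L^1$ and Hellinger distance (exploiting $\int(\sqrt p + \sqrt q)^2\, d\nu \leq 4$); hence it suffices to bound $H(Q_x^{m_N}, Q_{N,x}^{m_N})$. My plan is to reduce the joint Hellinger distance to a sum of per-step squared Hellinger distances via a chain rule, and then to bound each per-step term using Proposition \ref{prop:compdiff}.

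For the chain rule, set $h_j^2(y) = \int (\sqrt{q}-\sqrt{q_N})^2(\tau^N_{j-1},\tau^N_j, y, z)\, dz$ and write the Bhattacharyya coefficient $B(Q_x^{m_N}, Q_{N,x}^{m_N}) = \int \prod_{j=1}^{m_N} \sqrt{q \cdot q_N}(\tau^N_{j-1},\tau^N_j, z_{j-1}, z_j)\, dz_1 \cdots dz_{m_N}$ with $z_0 = x$. Integrating iteratively from the last coordinate backwards and using $\int \sqrt{q \cdot q_N}(\tau^N_{j-1},\tau^N_j,y,z)\, dz = 1 - \tfrac 1 2 h_j^2(y)$ yields the telescoping identity $H^2(Q_x^{m_N}, Q_{N,x}^{m_N}) = \sum_{j=1}^{m_N} \int h_j^2(y)\, \mu_{j-1}(y)\, dy$, where the nonnegative pseudo-density $\mu_{j-1}$ (obtained by integrating out the first $j-2$ intermediate coordinates) satisfies $\int \mu_{j-1}\, dy \leq 1$ and, by Cauchy--Schwarz, $\mu_{j-1}(y) \leq \sqrt{q^{(j-1)}(x,y)\, q_N^{(j-1)}(x,y)}$, with $q^{(j-1)}, q_N^{(j-1)}$ the $(j{-}1)$-step marginal densities of $X_t$ and $X_t^N$ at time $\tau^N_{j-1}$.

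For $y$ in the safe region $S_N := \{y \in \R^d : |y| \leq a_N/2\}$, Proposition \ref{prop:compdiff} gives $h_j^2(y) \leq C (\tau^N_j - \tau^N_{j-1})^{1/2} \rho_N$ with $\rho_N = I_{\{1/2<\beta<1\}} (\gamma_1^N)^{\beta^{-1}-1} + \ln(1/\gamma_1^N)(\gamma_1^N)^{1/2}$. Using $\int \mu_{j-1}\, dy \leq 1$ and the grid hypothesis $|\tau^N_j - \tau^N_{j-1}| \leq C/m_N$ from \eqref{eq:tau}, the contribution of $S_N$ summed over $j$ is bounded by $C m_N \cdot m_N^{-1/2} \rho_N = C m_N^{1/2} \rho_N$. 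On the complement $S_N^c$, the trivial bound $h_j^2 \leq 2$ combined with the pointwise control on $\mu_{j-1}$ gives a per-step contribution of at most $2 \sqrt{\mathbb P(X_{\tau^N_{j-1}} \notin S_N)\, \mathbb P(X_{\tau^N_{j-1}}^N \notin S_N)}$.

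The key technical point is controlling this escape contribution. Since $a_N = \ln(1/\gamma_1^N) \sim \beta \ln N \to \infty$ and both diffusions have drift of at most linear growth (by (A3)) with uniformly bounded diffusion coefficient (by (A5)), standard Gaussian tail estimates for linear SDEs give that both $\mathbb P(\sup_{0\leq t\leq T}|X_t| > a_N/2)$ and $\mathbb P(\sup_{0\leq t\leq T}|X_t^N| > a_N/2)$ are $o(N^{-p})$ for every $p > 0$. Summed over the polynomially many grid points, the total escape contribution remains $o(N^{-p})$ and is dominated by the main rate. Combining gives $H^2(Q_x^{m_N}, Q_{N,x}^{m_N}) \leq C m_N^{1/2} \rho_N$, hence $H \leq C m_N^{1/4} \rho_N^{1/2}$, and the inequality $\sqrt{a+b} \leq \sqrt a + \sqrt b$ applied to $\rho_N^{1/2}$ produces precisely the bound asserted in the proposition.
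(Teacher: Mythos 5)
Your proof is correct and follows essentially the same strategy as the paper: a Hellinger chain rule/tensorization to reduce the joint distance to per-step terms, application of Proposition \ref{prop:compdiff} on the region $\{|y|\leq a_N/2\}$, and Gaussian tail estimates to handle the complement. The only notable difference is that you derive the chain rule as an exact telescoping identity with a single geometric-mean weight $\mu_{j-1}$, whereas the paper invokes the chain-rule \emph{inequality} from an external reference (Proposition~2.1 of \cite{CLEM22}) involving expectations over both marginals; the two routes yield the same bound, with yours being somewhat more self-contained.
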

In particular, we have that the upper bound in the proposition converges to 0 if $m_N$ is of the form $m_N = C N^\mu$ with $\mu <  \beta$ for $\frac 1 2 <\beta\leq \frac 2 3$, $\mu < 2(1- \beta)$ for $\frac 2 3 \leq \beta<1$ and $\mu < 1$ for $\beta=1$.

\subsection{The parametrix method} \label{subsec:parametrix}  

The main tool of our proofs is the parametrix method. This method allows to represent transition densities of certain processes by so-called parametrix series. A parametrix for a differential operator is often easier to construct than a fundamental solution and for many purposes it is almost as powerful. Sometimes it is also possible to construct a fundamental solution from a parametrix by iteratively improving it. The idea of parametrix representations is old and goes back to \cite{lev:07}, who considered non-degenerate second-order operators in non-divergent form, see \cite{Fri64}. It is based on perturbative theory methods. In real time, the density of an SDE with variable coefficients is a priori close to the density of an SDE with constant coefficients, for which we have good density controls. The idea of the method is to use Kolmogorov equations satisfied by the two densities for precise estimates of their difference. In addition to Levi's approach, other versions of the parametrix method have been developed. \cite{MS67} proposed an approach
to obtain asymptotic expansions of the Laplacian spectrum on a manifold as a function of its curvature. This approach allows to study errors of discrete approximation schemes. In the framework of inhomogeneous non-degenerate diffusion processes it has been used in \cite{KM00} to obtain local limit propositions for approximating Markov processes and in \cite{KM02} to get error bounds for Euler schemes. In recent years there were some progresses of the parametrix method in the literature. Without claiming to be complete, we only mention extensions to processes with jumps, to McKean--Vlasov type equations, see \cite{FKM21}, to  degenerate Kolmogorov type diffusions, see \cite{dela:meno:10,kona:meno:molc:10} and to parabolic SPDEs, see \cite {PP21}. We now explain the core of the method for the example of a classical Brownian diffusion without going into technical details. We just explain the "Master formula" as it appeared in the celebrated  article \cite {MS67}. In this example we are interested in studying Brownian SDEs of the form
\begin{equation} \label{parintro1} Z_t = z + \int_0^t b(s,Z_s) \mathrm d s + \int_0^t \sigma(s,Z_s) \mathrm d W_s, \end{equation} 
where $(W_s)_{s \geq 0}$ is an $\R^k$-valued Brownian motion on some filtered probability space $(\Omega. \mathcal F, ( \mathcal F_t)_{t \geq 0}, \mathbb P)$ and where the process $Z_t$ 
is $\R^k$-valued. The coefficients $b$ and $\sigma$ are $\R^k$-valued or $\R^k\times \R^k$-valued, respectively and under certain assumptions on $b$ and $\sigma$ a unique weak solution of \eqref{parintro1} exists and admits a transition density/fundamental solution $p(s,t,x,y)$. Along with equation  \eqref{parintro1}, consider the equation with coefficients "frozen" at the point $y$   and put $\tilde p(s,t,x,y) = p^y(s,t,x,y)$ where $p^z(s,t,x,y)$ is the Gaussian  transition density of
\begin{equation} \label{parintro2} \tilde Z_v = \tilde Z_0 + \int_0^v b(u,z) \mathrm d u + \int_0^v \sigma(u,z) \mathrm d W_u. \end{equation} 
Below we will make use of the backward and forward Kolmogorov equations:
\begin{equation} \label{parintro3}\frac {\partial \tilde p}{\partial s} + \tilde L \tilde p = 0, \quad \frac {\partial  p}{\partial s} +  L  p = 0, \quad - \frac {\partial \tilde p}{\partial t} + \tilde L^* \tilde p = 0 , \quad - \frac {\partial  p}{\partial t} +  L ^*  p = 0.\end{equation} 
together with the initial conditions
\begin{equation} \label{parintro4} \tilde p (t,t,x,y) = \delta(x-y)  \text{  and   }    p (t,t,x,y) = \delta(x-y) . \end{equation} 
With the help of  \eqref{parintro3} and  \eqref{parintro4} we can write the basic equality for the parametrix method:
\begin{eqnarray*}&&p (s,t,x,y) - \tilde p (s,t,x,y)\\
&& \qquad = \int_s^t \mathrm d u \frac {\partial}{\partial u} \left [ \int_{\R^k}   p (s,u,x,z)  \tilde p (u,t,z,y) \mathrm d z\right ]\\
&& \qquad = \int_s^t \mathrm d u \int_{\R^k}   \left [  \tilde p (u,t,z,y) L^* p (s,u,x,z)-   p (s,u,x,z) \tilde L \tilde p (u,t,z,y)\right ]  \mathrm d z\\
&& \qquad = \int_s^t \mathrm d u \int_{\R^k}   \left [   p (s,u,x,z) (L- \tilde L) \tilde  p (u,t,z,y)\right ]  \mathrm d z.
\end{eqnarray*}
This equation can be written as 
\begin{equation} \label{parintro5} p = \tilde p (t,t,x,y) + p \otimes H,\end{equation} 
where $H= [L - \tilde L] \tilde p$ 
and
the convolution type binary operation $\otimes$ is defined by
\begin{equation} \label{parintro5a} (f \otimes g) (s,t,x,y) = \int_s^t \mathrm d u \int _{\mathbb R^d} f(s,u,x,z) g(u,t,z,y) \mathrm d z.\end{equation} 
Iterative application of \eqref{parintro5}  gives an infinite series
\begin{equation} \label{parintro6} p =  \sum_{r=0} ^\infty \tilde p \otimes H^{(r)},\end{equation} 
where $\tilde p \otimes H^{(0)}= \tilde p$ and $\tilde p \otimes H^{(r+1)} = (\tilde p \otimes H^{(r)}) \otimes H$ 
for $r=0,1,2,...$ . An important property of representation \eqref{parintro6} is that it allows us to express the non-Gaussian density $p$ in terms of Gaussian densities $\tilde p$. Equation \eqref{parintro6} is the "Master formula" in our proof. We will apply it twice, to the truncated diffusion $X_t^N$ and to the truncated Robbins-Monro process $U_t^N$. In the latter application the parametrix method allows us to express transition densities of the Robbins-Monro process as an expression depending on the densities of sums of independent random variables. The main idea of the proof is based on the comparison of the densities of sums with the Gaussian densities showing up in the parametrix expansion of the truncated diffusion $X_t^N$.

We will make use of infinite series expansions of  $q_N$
\begin{eqnarray} \label{eq:paraqN}
q_N(t,s,x,y) &=& \sum_{r=0}^\infty \tilde q_N\otimes H_N^{(r)}(t,s,x,y),
\end{eqnarray}
where the notation on the right hand side of this equation will be explained now. The  equation is based on looking at a frozen diffusion process $\tilde X_t^{s,x,y}$ 
$$\tilde X_t^{s,x,y} = x + \int_s^t F_N(u, \theta_{u,s}^N(y)) \chi_{N} (\theta_{u,s}^N(y)\mathrm d u + \int_s^t R^{1/2}(\bar \theta^N_u) \mathrm d B_u,$$
where 
the functions $\theta_{t,s}^N$ ($0 \leq t \leq s$) are the solutions of the following ordinary differential equations
\begin{eqnarray} \label{eq:defthetaN}
\frac {\mathrm d}{\mathrm d t} \theta_{t,s}^N(y) = F_N(t,\theta_{t,s}^N(y)) \chi_{N} (\theta_{t,s}^N(y))
\end{eqnarray}
with terminal condition
$\theta_{s,s}^N(y)=y$.
This is a Gaussian process with transition density
$$\tilde q_N(t,s,x,y) =\tilde q_N^{t,x,y}(t,s,x,z)|_{z=y} = g_{\bar \sigma}(t,s, \theta_{t,s}^N(y) -x),$$
where \begin{eqnarray*} 
g_{\bar \sigma}(t,s,z) &=& \frac 1 {(2 \pi)^{d/2} \sqrt{\det \bar \sigma(t,s)} }\exp \left ( -\frac 1 2 z^\intercal \bar \sigma(t,s)^{-1} z \right ),\\
\bar \sigma(t,s)&=& \int_t^s R(\bar \theta^N_u) \mathrm d u.
\end{eqnarray*}
We define
$$H_N(t,s,x,y) = ( L_t^N -\tilde L_t^N)\tilde q_N(t,s,x,y), $$where $ L_t^N$ and $\tilde L_t^N$ are  the following generators:
\begin{eqnarray*} 
L_t^N&=& \frac 1 2 \sum_{i,j=1}^d R_{ij}(\bar \theta^N_t) \frac {\partial^2}{ \partial x_i \partial x_j} + \sum_{i=1}^d \left ( \sum_{j=1}^d\left [F_N(t, x )\right]_{i,j} \left [\chi_{N}( x )\right]_{j}\right ) \frac {\partial}{ \partial x_i },\\
\tilde L_t^N&=& \frac 1 2 \sum_{i,j=1}^d R_{ij}(\bar \theta^N_t) \frac {\partial^2}{ \partial x_i \partial x_j} + \sum_{i=1}^d \left ( \sum_{j=1}^d\left [F_N(t, \theta_{t,s}^N(y) )\right]_{i,j} \left [\chi_{N}( \theta_{t,s}^N(y) )\right]_{j}\right ) \frac {\partial}{ \partial x_i },
\end{eqnarray*}
where the flow $\theta_{t,s}^N(y)$ is defined in \eqref{eq:defthetaN}.
The convolution type operation $\otimes$ is defined as in \eqref{parintro5a} and for $r=1,2,...$ the $r$-fold convolution is given by $g\otimes H_N^{(r)}= (g\otimes H_N^{(r-1)})\otimes H_N$ with $g\otimes H_N^{(0)}=g$. The validity of formula  \eqref{parintro6} for our choices of $p, \tilde p$, and  $H$ and the correctness  of \eqref{eq:paraqN}    for $q_N,\tilde q_N$, and $H_N$ follows from Lemmas \ref{em2:boundstheta} and \ref{lem4:boundsH} stated in Subsection \ref{sec:somebounds}.

For the proof in the next subsection we will make use of  the  following series expansion of $p_N$ for $l < k$
\begin{eqnarray} \label{eq:parapN}
p_N(t_l^N, t_k^N,x,y) &=& \sum_{r=0}^N \tilde p_N\otimes_N \mathcal K_N^{(r)}(t_l^N, t_k^N,x,y),
\end{eqnarray}
where the notation on the right hand side of this equation will be explained now. For this purpose we consider the following frozen Markov chain 
\begin{eqnarray} \label{eq:frozenMC}
\tilde V_{t_{i+1}^ N}^{N,y}  &=&\tilde V_{t_{i}^ N}^{N,y} + \left ( \theta_{t_{i+1}^ N, t_{j}^ N}^{N} (y) -  \theta_{t_{i}^ N, t_{j}^ N}^{N} (y)\right )\\
&& \quad - \sqrt{\gamma_{i+1}^N} \xi\left ( \bar \theta^N_{t_i^N}+ \chi_{N} \left ( \theta_{t_{i}^ N, t_{j}^ N}^{N} (y) \right )\sqrt{\gamma_{i}^N}, \eta_{i+1}\right) \nonumber
\\ 
&=&\tilde V_{t_{i}^ N}^{N,y} + \int_{t_{i}^ N}^{t_{i+1}^ N} F_N\left (u,
 \theta_{u, t_{j}^ N}^{N} (y)\right ) \chi_{N} \left (  \theta_{u, t_j^N}^N (y)\right )\mathrm d u\nonumber \\
&& \quad - \sqrt{\gamma_{i+1}^N} \xi\left ( \bar \theta^N_{t_i^N}+ \chi_{N} \left ( \theta_{t_{i}^ N, t_{j}^ N}^{N} (y) \right )\sqrt{\gamma_{i}^N}, \eta_{i+1}\right). \nonumber\end{eqnarray}
By iterative application of \eqref{eq:frozenMC} we get that for $k < j$
$$\tilde V_{t_{j}^ N}^{N,y}  =\tilde V_{t_k^ N}^{N,y} + y - \theta_{t_{k}^ N, t_{j}^ N}^{N} (y) - \sum_{i=k} ^{j-1} \sqrt {\gamma_{i+1}^N} \xi_i,$$
where $$\xi_i =  \xi\left ( \bar \theta^N_{t_i^N}+ \chi_{N} \left ( \theta_{t_{i}^ N, t_{j}^ N}^{N} (y) \right )\sqrt{\gamma_{i}^N}, \eta_{i+1}\right).$$
For the transition density $  \tilde p_N^y( t_{k}^ N, t_{j}^ N, x,z) $ of the frozen Markov chain we have 
\begin{eqnarray*}   \tilde p_N^y( t_{k}^ N, t_{j}^ N, x,z)&=& \frac {\mathrm d}{\mathrm d z} P\left ( \tilde V_{t_j^ N}^{N,y} \in \mathrm d z \big | \tilde V_{t_k^ N}^{N,y}=x\right )\\
&=& p_{S_N}\left (z-x-y +  \theta_{t_{k}^ N, t_{j}^ N}^{N} (y)\right ),
\end{eqnarray*} 
where $p_{S_N}$ denotes the density of $-\sum_{i=k} ^{j-1} \sqrt {\gamma_{i+1}^N} \xi_i$. Note that 
$$ \tilde p_N^y( t_{k}^ N, t_{j}^ N, x,y)= p_{S_N}\left (  \theta_{t_{k}^ N, t_{j}^ N}^{N} (y)- x\right ).$$
For a test function $\phi$ we define now the one step generators:
\begin{eqnarray*}  \mathcal L_N \phi( t_{i}^ N, t_{j}^ N, x,y)&=& \frac1 {\gamma_{i+1}^N} \int_{\mathbb R^d} \left (\phi( t_{i+1}^ N, t_{j}^ N, z,y) - \phi( t_{i+1}^ N, t_{j}^ N, x,y)\right )p_N( t_{i}^ N, t_{i+1}^ N, x,z) \mathrm d z,\\
\tilde {\mathcal L}_N \phi( t_{i}^ N, t_{j}^ N, x,y)&=& \frac1 {\gamma_{i+1}^N} \int_{\mathbb R^d} \left (\phi( t_{i+1}^ N, t_{j}^ N, z,y) - \phi( t_{i+1}^ N, t_{j}^ N, x,y)\right ) \tilde p^y_N( t_{i}^ N, t_{i+1}^ N, x,z) \mathrm d z.
\end{eqnarray*} 
We put $$\mathcal K_N( t_{i}^ N, t_{j}^ N, x,y) = \left ( {\mathcal L}_N - \tilde {\mathcal L}_N\right ) \tilde p^y_N( t_{i}^ N, t_{j}^ N, x,y)$$ and with the discretized time convolution $$(f \otimes_N g) ( t_{i}^ N, t_{j}^ N, x,y) = \sum_{k=i} ^{j-1} \gamma_{k+1} ^N \int_{\mathbb R^d} f ( t_{i}^ N, t_{k}^ N, x,z) g( t_{k}^ N, t_{j}^ N, z,y) \mathrm d z$$
we define for $r=1,2,...$ the $r$-fold convolution as $g\otimes_N \mathcal K_N^{(r)}= (g\otimes_N \mathcal K_N^{(r-1)})\otimes_N \mathcal K_N$ with $g\otimes_N \mathcal K_N^{(0)}=g$.
With this notation one can show that \eqref{eq:parapN} holds. For the proof one makes repeated use of the Markov property, see also Lemma 3.6 in Konakov and Mammen (2000). 

\subsection{Comparison of the truncated version of the Robbins-Monro algorithm with the truncated  diffusion}\label{subsec:compRobMon}
In this subsection we want to prove the following bound for the difference $|q_N -p_N|$ of the transition density $p_N$ of the truncated Robbins-Monro procedure  $V_{t_i^N}^N$ and of the transition density $q_N$ of the truncated diffusion $X_t^N$. The main result of this section is the following result
\begin{theorem}\label{prop:comptrunc} { There exists  a constant $N_0 >0$ such that for $N \geq N_0$, for $i < j$ and  for all $x,y \in \mathbb R^n$ }it holds that
\begin{eqnarray*}&&|q_N(t_{i}^ N, t_{j}^ N,x,y) - p_N(t_{i}^ N, t_{j}^ N,x,y)| \\ && \qquad \leq C \sqrt{\gamma_1^N} \ln^2(1 / \gamma_1^N) \mathcal Q_{M-d-6}(t_{j}^ N-  t_{i}^ N,y-\theta_{t_{j}^ N,  t_{i}^ N}^N(x)).\end{eqnarray*}
\end{theorem}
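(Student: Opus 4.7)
The plan is to exploit the parametrix series representations developed in Subsection \ref{subsec:parametrix}: namely \eqref{eq:paraqN} for $q_N$ and \eqref{eq:parapN} for $p_N$. Writing the difference term by term,
\[
q_N - p_N = \sum_{r \geq 0} \bigl(\tilde q_N \otimes H_N^{(r)} - \tilde p_N \otimes_N \mathcal K_N^{(r)}\bigr),
\]
the proof reduces to two one-step comparisons, plus a summation estimate of parametrix-type. The first comparison is between the frozen Gaussian density $\tilde q_N$ and the frozen chain density $\tilde p_N$, which is the density of a sum $-\sum_i \sqrt{\gamma_{i+1}^N}\,\xi_i$ of independent centered random variables whose covariance matches that of the Gaussian by Assumption (A4). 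The second is between the kernels $H_N = (L_t^N - \tilde L_t^N)\tilde q_N$ and $\mathcal K_N = (\mathcal L_N - \tilde{\mathcal L}_N)\tilde p_N^y$.

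For the first comparison, the key input is a local limit theorem / Edgeworth-type expansion for sums of independent random variables, using the smoothness and polynomial-decay controls on $f_{t,x}$ from (A4). Because the truncation enforces $\|\chi_N(x)\| \leq a_N + 1 = O(\ln(1/\gamma_1^N))$, the one-step centered increments $\sqrt{\gamma_{k+1}^N}\,\xi_k$ have uniformly controlled higher moments, and standard Fourier / characteristic-function arguments (as in the companion work Konakov--Mammen) yield
\[
|\tilde p_N - \tilde q_N|(t_i^N, t_j^N, x, y) \leq C\, \sqrt{\gamma_1^N}\, \ln(1/\gamma_1^N)\, \mathcal Q_{M-d-2}\bigl(t_j^N - t_i^N,\, y - \theta_{t_j^N, t_i^N}^N(x)\bigr),
\]
with a loss of a few polynomial-decay orders from integration by parts against the smooth characteristic function.

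For the kernel difference $|H_N - \mathcal K_N|$, I would Taylor-expand the test function $\tilde p_N^y(t_{i+1}^N,t_j^N,\cdot,y)$ inside the discrete one-step generators $\mathcal L_N, \tilde{\mathcal L}_N$, matching the first- and second-order terms against the continuous generators $L_t^N, \tilde L_t^N$. The remainder produces two contributions: a discretization error of the drift (controlled via the Lipschitz property of $\mathcal Dh$ from (A3), times $\sqrt{\gamma_{k}^N/\gamma_{k+1}^N}-1 = O(\gamma_1^N)$), and a third-order Taylor remainder that gets controlled by the decay of derivatives of $\tilde p_N^y$. Combined with the $\chi_N$-truncation (which bounds $F_N$ and its products by powers of $\ln(1/\gamma_1^N)$), this yields
\[
|H_N - \mathcal K_N|(t_i^N, t_j^N, x, y) \leq C\, \sqrt{\gamma_1^N}\, \ln(1/\gamma_1^N)\, \mathcal Q_{M-d-4}\bigl(t_j^N - t_i^N,\, y - \theta_{t_j^N,t_i^N}^N(x)\bigr),
\]
where the factor $\sqrt{\gamma_1^N}$ is the per-step error times $(t_j^N-t_i^N)^{-1/2}$ coming from the smoothing in Gaussian densities.

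Finally, I would insert these two estimates into the telescoped difference of parametrix series, using the standard parametrix smoothing bounds (each convolution $\tilde q_N \otimes H_N^{(r)}$ is dominated by $C^r (t_j^N - t_i^N)^{r/2}/\Gamma(r/2+1)$ times $\mathcal Q$). Summing in $r$ produces a convergent series on $[0,T]$, multiplying the per-term error $\sqrt{\gamma_1^N}\ln(1/\gamma_1^N)$ by an additional $\ln(1/\gamma_1^N)$ that arises from the logarithmic growth of $F_N$ and $\chi_N$ when bounding the convolutions uniformly; this is the source of the $\ln^2(1/\gamma_1^N)$ in the final statement, and we lose two further polynomial orders in $\mathcal Q$ to absorb the shift between $y$ and $\theta_{t_j^N, t_i^N}^N(x)$ inside the Gaussian-type bounds. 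The hardest step is the kernel comparison: tracking precisely how many polynomial-decay orders are lost (here down to $M-d-6$) and how the truncation profile $\chi_N, \phi_N$ interacts with the derivatives of $F_N$ requires careful bookkeeping, since the derivatives of $\chi_N$ are supported on $\{a_N < \|x\| \leq a_N+1\}$ where the Gaussian bounds themselves are exponentially small, so these boundary contributions must be shown to be negligible compared to the main $\sqrt{\gamma_1^N}\ln^2(1/\gamma_1^N)$ error.
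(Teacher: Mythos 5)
Your proposal captures the right architecture (compare the two parametrix series, use an Edgeworth/local--CLT bound for $\tilde q_N$ versus $\tilde p_N$, then a kernel comparison, then sum with parametrix smoothing), and this is indeed the paper's route. But two structural steps that the paper handles with separate lemmas are missing, and without them the telescoping you set up does not go through cleanly.

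First, the two series you want to compare term-by-term do not use the same convolution: $q_N = \sum_r \tilde q_N \otimes H_N^{(r)}$ uses the continuous time-convolution $\otimes$, while $p_N = \sum_{r=0}^N \tilde p_N \otimes_N \mathcal K_N^{(r)}$ uses the discrete time-convolution $\otimes_N$ (and a finite truncation of the sum). You cannot just subtract corresponding summands. The paper's Lemma \ref{lem1:comptrunc} first shows that $\sum_r \tilde q_N \otimes H_N^{(r)}$ may be replaced by $\sum_r \tilde q_N \otimes_N H_N^{(r)}$ at cost $O(\ln^2(1/\gamma_1^N)\sqrt{\gamma_1^N}\sqrt{t_j^N - t_i^N}\,\bar q_N)$ — this is where the $\ln^2$ actually enters, coming from the $\ln^2(1/\gamma_1^N)$ bound on the coefficients of $(L_s^N)^2 - 2L_s^N\tilde L_s^N + (\tilde L_s^N)^2$ after a second-order Taylor expansion in the time variable using the forward and backward Kolmogorov equations. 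Your account attributes the extra log to summing over $r$, which is not where it comes from. Lemma \ref{lem2:comptrunc} then disposes of the tail $\sum_{r>N}$; only after that can one compare term-by-term in a common $\otimes_N$-framework.

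Second, you treat $H_N$ versus $\mathcal K_N$ as a single kernel comparison. The paper factors this through an intermediate kernel $K_N = (L_{t_i^N}^N - \tilde L_{t_i^N}^N)\tilde p_N$, i.e., the continuous generator difference applied to $\tilde p_N$ rather than $\tilde q_N$. Lemma \ref{lem3:comptrunc} then controls $\tilde q_N \otimes_N H_N^{(r)}$ versus $\tilde p_N \otimes_N K_N^{(r)}$ purely by the $\tilde q_N \to \tilde p_N$ replacement (this is where \eqref{eq:lem3cent2}, the Edgeworth-type local limit bound, does the work, and the polynomial order drops from $M$ to $M-d-2$); Lemma \ref{lem4:comptrunc} then controls $K_N$ versus $\mathcal K_N$ by a careful Taylor expansion of the one-step discrete generators against the continuous ones, needing \eqref{eq:bound MN} and losing further orders down to $M-d-6$. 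Coupling these two error sources in a single step, as your sketch does, makes it much harder to track the loss of polynomial decay orders and to isolate where the extra $\ln$ factor comes from; if you try to carry out the argument this way the bookkeeping will not close without effectively rediscovering the $K_N$ decomposition.

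So: right approach, but the $\otimes$-versus-$\otimes_N$ discretization step and the $K_N$ intermediate kernel are genuinely missing ideas, not just omitted detail, and the claimed source of the $\ln^2$ factor is misattributed.
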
 
In the statement of the theorem for a natural number  $m$ and positive real numbers $t$ we define 
$$ \mathcal Q_{m}(t,x) = t^{-d/2} Q_m( t^{-1/2} x)$$ 
with $Q_m$ defined in Assumption (A4).

We now come to the proof of Theorem \ref{prop:comptrunc}. Note that by \eqref{eq:paraqN} and \eqref{eq:parapN} we have for  $i < j$ that 
$$q_N(t_{i}^ N,t_{j}^ N,x,y) - p_N(t_{i}^ N,t_{j}^ N,x,y) =  \sum_{r=0}^\infty \tilde q_N\otimes H_N^{(r)}(t_{i}^ N,t_{j}^ N,x,y) -  \sum_{r=0}^N \tilde p_N\otimes_N \mathcal K_N^{(r)}(t_{i}^ N,t_{j}^ N,x,y). $$ With this expansion Theorem \ref{prop:comptrunc} follows immediately from the following lemmas. In the first lemma we replace  the convolution operation $\otimes$ in the parametrix expansion of $q_N$ by the discrete convolution  operator $\otimes_N$. In Lemma \ref{lem2:comptrunc} we show that it suffices to consider only the first $N$ terms in the expansion. Lemma \ref{lem3:comptrunc} now is the heart of our argument. We replace  in the parametrix expansions the Gaussian densities $\tilde q_N$ by the densities $\tilde p_N$ of normed sums of independent random variables. 
We use Edgeworth expansion arguments and local limit propositions that offer powerful tools to bound the errors of this replacement. Note that $\tilde q_N$ is replaced by  $\tilde p_N$ at two places: in the summation and in the definition of the kernels $H_N$ and $K_N$. At this point we apply Lemma \ref{lem:boundsec31 2}. The kernel $K_N$ is defined as $H_N$, but with $\tilde q_N$ replaced by  $\tilde p_N$. Finally, Lemma \ref{lem4:comptrunc} bounds errors that show up by replacing the kernel $K_N$ by the kernel $\mathcal K_N$, which is used in the parametrix expansion of the Robbins-Monro algorithm. 

\begin{lemma} \label{lem1:comptrunc}
For $i < j$ it holds with some constant $C > 0$ that
 \begin{eqnarray*} &&\left |\sum_{r=0}^\infty \tilde q_N\otimes H_N^{(r)}(t_{i}^ N, t_{j}^ N,x,y) - \sum_{r=0}^\infty \tilde q_N\otimes_N H_N^{(r)}(t_{i}^ N, t_{j}^ N,x,y) \right |\\
 && \qquad  \leq C \ln^2(1 / \gamma_1^N) \sqrt {\gamma_1^N} \sqrt { t_{j}^ N- t_{i}^ N} \bar q_N(t_{i}^ N, t_{j}^ N,x,y),\end{eqnarray*} 
 where $\bar q_N$ is the transition density of the diffusion $\bar X_t^N$ defined in \eqref{eq:majdiff2}.
\end{lemma}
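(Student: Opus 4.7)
The plan is to bound the series difference term-by-term, replacing the continuous convolution $\otimes$ by the discrete one $\otimes_N$ one factor at a time, and to estimate each single replacement as a Riemann sum approximation error of the associated parametrix integrand.

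Fix $r \geq 1$ and for $m = 0, 1, \dots, r$ define the hybrid expressions
\begin{equation*}
I_m^{(r)} = \tilde q_N \underbrace{\otimes_N H_N \otimes_N \cdots \otimes_N H_N}_{m \text{ operations}} \underbrace{\otimes H_N \otimes \cdots \otimes H_N}_{r-m \text{ operations}},
\end{equation*}
so that $I_0^{(r)} = \tilde q_N \otimes H_N^{(r)}$ and $I_r^{(r)} = \tilde q_N \otimes_N H_N^{(r)}$. Telescoping gives
\begin{equation*}
\tilde q_N \otimes H_N^{(r)} - \tilde q_N \otimes_N H_N^{(r)} = \sum_{m=1}^{r} (I_{m-1}^{(r)} - I_m^{(r)}),
\end{equation*}
each summand differing from the next only in the $m$-th convolution and equal to the Riemann sum error of a single $\otimes$-convolution applied to iterated parametrix kernels, with outer factors that are themselves bounded by Gaussian-type kernels.

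Next I would invoke the standard parametrix bounds for $\tilde q_N$ and $H_N$: both are bounded by Gaussian densities, with $H_N$ carrying a $(t-u)^{-1/2}$ singular factor coming from the fact that $L_t^N - \tilde L_t^N$ is first order in space acting on the Gaussian $\tilde q_N$, and their $u$-derivatives carry an additional singular factor $(u-s)^{-1}$ or $(t-u)^{-1}$. On each subinterval $[t_k^N, t_{k+1}^N]$ the Riemann error of $u \mapsto \int f(\cdot,u,\cdot,z) g(u,\cdot,z,\cdot) \mathrm d z$ is bounded by $(\gamma_{k+1}^N)^2$ times these singular factors, and the $z$-integration yields a convolution of Gaussian bounds which is absorbed into the upper-bound kernel $\bar q_N$ from \eqref{eq:majdiff2}. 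Summing over $k$ produces one $\ln(1/\gamma_1^N)$ factor via $\sum_k \gamma_{k+1}^N / (t_j^N - t_k^N) \leq C \ln(1/\gamma_1^N)$, and summing the resulting bounds over $m \in \{1,\dots,r\}$ and $r \geq 1$ yields a convergent series by the classical beta-function estimates of the parametrix method, delivering a second $\ln(1/\gamma_1^N)$ factor together with the Gaussian-type profile $\bar q_N$.

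The main obstacle will be the non-integrable $(t_j^N - u)^{-3/2}$ singularity of $\partial_u H_N$ near the terminal time $u = t_j^N$ (and the analogous $(u - t_i^N)^{-1}$ singularity of $\partial_u \tilde q_N$ near $u = t_i^N$), which cannot be handled by the crude second-order bound $(\gamma_{k+1}^N)^2 \sup|\partial_u \phi|$ alone. I would split the Riemann sum into a bulk part, on which the second-order bound is integrable and yields the logarithmic factors, and a boundary part covering the $O(1)$ subintervals adjacent to $t_i^N$ and $t_j^N$. On the boundary part I would apply only the direct size bound $\gamma_{k+1}^N \sup|\phi|$ together with the integrable singularity $(t_j^N - u)^{-1/2}$ of $H_N$ itself, which contributes the $\sqrt{\gamma_1^N}\sqrt{t_j^N - t_i^N}$ scaling and matches the claimed estimate.
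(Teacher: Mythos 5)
Your telescoping by position within the $r$-fold convolution is a reasonable idea but structurally different from, and heavier than, the paper's route. The paper also telescopes one convolution at a time, but then \emph{sums the telescoping identity over $r$} using linearity, which collapses everything into the fixed-point equation
\[
q_N - q_N^{discr} = \left ( q_N\otimes H_N- q_N\otimes_N H_N\right ) + \left ( q_N- q_N^{discr}\right )\otimes _N H_N ,
\]
iterated to
\[
q_N - q_N^{discr} = \left ( q_N\otimes H_N- q_N\otimes_N H_N\right ) + \left ( q_N\otimes H_N- q_N\otimes_N H_N\right )\otimes _N \Phi_N^{discr},
\]
with $\Phi_N^{discr}=\sum_{k\geq 1}H_N^{(k),\otimes_N}$. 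Thus only one Riemann error term, $q_N\otimes H_N - q_N\otimes_N H_N$, needs to be bounded, and the outer factor is the full transition density $q_N$ rather than a partial series. In your hybrid scheme $I_{m-1}^{(r)} - I_m^{(r)}$ has to be controlled for every $m\in\{1,\ldots,r\}$ and every $r$, and the factors sitting to the left and right of the $m$-th convolution are mixed iterated objects whose regularity in the intermediate time variable would itself have to be established; you do not say how.

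Two ingredients of your plan also need correction. First, the $\ln^2(1/\gamma_1^N)$ factor does not come from a harmonic-type sum $\sum_k \gamma_{k+1}^N/(t_j^N - t_k^N)$, nor from the double sum over $m$ and $r$. That harmonic sum is not even the one that arises here, since $H_N$ carries only a $(t-u)^{-1/2}$ singularity. In the paper the logs come from the truncated drift $F_N(t,\cdot)\chi_N(\cdot)$ being bounded by $C a_N = C\ln(1/\gamma_1^N)$: after the fourth-order terms in $(L_s^N)^2 - 2L_s^N\tilde L_s^N + (\tilde L_s^N)^2$ cancel, the surviving second-order operator has coefficients of order $a_N^2$, which is where $\ln^2$ enters. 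Second, the paper's estimate of the Riemann error rests on applying the forward and backward Kolmogorov equations to $\lambda_u(z) = q_N(t_i^N,u,x,z)H_N(u,t_j^N,z,y)$, integrating by parts in $z$, and exploiting that fourth-order cancellation; only after these manipulations is the singularity of $\int \partial_u \lambda_u(z)\,\mathrm d z$ reduced to the integrable $(s_k - t_i^N)^{-1/2}(t_j^N - s_k)^{-1/2}$, which is what lets the sum in \eqref{eq:ineqlem42} produce $\sqrt{\gamma_1^N}\sqrt{t_j^N - t_i^N}$. A generic second-order bound with the raw $(t_j^N-u)^{-3/2}$ singularity of $\partial_u H_N$ is not obviously sufficient, and in any case would not exhibit the $\ln^2$ factor from this route. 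Your bulk/boundary split is in fact close to how the paper treats the extreme indices $k=i$ and $k=j-1$ in \eqref{eq:ineqlem42}, so that part of the plan is sound, but the key mechanisms (resolvent collapse, Kolmogorov/integration-by-parts cancellation, truncated-drift source of the logarithms) are missing.
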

Note that we have the bound \eqref{eq:boundmajdiff2} for $\bar q_N$.
\begin{lemma} \label{lem2:comptrunc}
{ There exists  a constant $N_0 >0$ such that for $N \geq N_0$,} for $i < j$ it holds  that
 \begin{eqnarray*} &&\left |\sum_{r=N+1}^\infty \tilde q_N\otimes_N H_N^{(r)}(t_{i}^ N, t_{j}^ N,x,y) \right |\\
 && \qquad  \leq C \exp(-CN) \bar q_N(t_{i}^ N, t_{j}^ N,x,y).\end{eqnarray*} 
\end{lemma}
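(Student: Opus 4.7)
The plan is to obtain a per-term decay bound of the form
\begin{equation*}
|\tilde q_N \otimes_N H_N^{(r)}|(t_i^N, t_j^N, x, y) \,\leq\, \frac{C^{r+1}(t_j^N - t_i^N)^r}{r!}\, \bar q_N(t_i^N, t_j^N, x, y)
\end{equation*}
by induction on $r$, and then sum from $r = N+1$ onwards. Since $t_j^N - t_i^N \leq T$, Stirling's inequality $r! \geq (r/e)^r$ gives $(CT)^r/r! \leq (eCT/r)^r \leq 2^{-r}$ once $r \geq 2eCT$. Summing the resulting geometric tail from $r = N+1$ then yields the claimed bound $C\exp(-CN)\,\bar q_N(t_i^N, t_j^N, x, y)$, provided $N$ is large enough (for small $N$ everything is absorbed into the constant).

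For the induction I would first bound the one-step kernel. Since $L_t^N$ and $\tilde L_t^N$ share the diffusion coefficient $R(\bar{\theta}_t)$, the second-order terms cancel in $H_N = (L_t^N - \tilde L_t^N)\tilde q_N$, leaving
\begin{equation*}
H_N(t,s,x,y) = \sum_{k=1}^d \bigl[ F_N(t,x)\chi_{N}(x) - F_N(t,\theta_{t,s}^N(y))\chi_{N}(\theta_{t,s}^N(y))\bigr]_k \,\partial_{x_k}\tilde q_N(t,s,x,y).
\end{equation*}
The bracketed coefficient is bounded by $C|x - \theta_{t,s}^N(y)|$ (Lipschitzness of $F_N\chi_{N}$ is uniform in $N$ by (A3) and the boundedness of $\chi_{N}$), while the derivative estimate $|\partial_{x_k}\tilde q_N(t,s,x,y)| \leq C(s-t)^{-1/2}\bar q_N(t,s,x,y)$ is standard for Gaussian densities (cf.\ Lemma \ref{lem4:boundsH}). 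Absorbing the linear factor via $|x-\theta_{t,s}^N(y)|\bar q_N(t,s,x,y) \leq C\sqrt{s-t}\,\bar q_N(t,s,x,y)$ (with, if necessary, a slightly inflated Gaussian still dominated by $\bar q_N$) gives the clean one-step bound $|H_N(t,s,x,y)| \leq C\,\bar q_N(t,s,x,y)$.

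To iterate I would use the semigroup-type estimate
\begin{equation*}
\int_{\R^d} \bar q_N(t_i^N, t_k^N, x, z)\, \bar q_N(t_k^N, t_j^N, z, y)\,\mathrm d z \,\leq\, C\,\bar q_N(t_i^N, t_j^N, x, y),
\end{equation*}
which follows from \eqref{eq:boundmajdiff2} and the uniform ellipticity in (A5), combined with the left Riemann-sum inequality
\begin{equation*}
\sum_{k=i}^{j-1}\gamma_{k+1}^N (t_k^N - t_i^N)^r \,\leq\, \int_{t_i^N}^{t_j^N}(u - t_i^N)^r\,\mathrm d u = \frac{(t_j^N - t_i^N)^{r+1}}{r+1},
\end{equation*}
valid because $u \mapsto (u-t_i^N)^r$ is monotone increasing. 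Inserting both into the definition of $\otimes_N$ upgrades the bound from $r$ to $r+1$. The main obstacle is ensuring that the constant $C$ in the per-term estimate does not itself deteriorate with $r$: this requires the Chapman--Kolmogorov upper bound above to hold with a constant independent of the number of factors, which in turn rests on the uniform non-degeneracy and Lipschitz structure provided by (A5) and (A3). Once this stability is in place, the factorial in $r!$ is what drives the exponential decay in $N$ and gives the conclusion.
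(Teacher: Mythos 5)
Your proof is correct, but it takes a genuinely different and slightly sharper route than the paper's. The paper simply quotes the already-established bound \eqref{eq:boundsH2} from Lemma \ref{lem4:boundsH}, which gives $|H_N^{(r)}|\leq C^r\frac{\Gamma^r(1/2)}{\Gamma(r/2)}(v-t)^{(r-d-2)/2}\exp(\cdot)$, convolves once more against $\tilde q_N$ with the discrete-time Riemann-sum estimate, and lands on the per-term bound $\frac{C^r}{\Gamma(1+r/2)}\bar q_N$; summing from $r=N+1$ then yields $C'e^{-C'N}$. You instead build the iteration from scratch: your one-step bound $|H_N|\leq C\,\bar q_N$ absorbs the $(s-t)^{-1/2}$ singularity that the paper leaves visible (using the Lipschitzness of $F_N\chi_N$ from Lemma \ref{lem:boundsec31 1} to get the factor $|x-\theta^N_{t,s}(y)|$, then trading it for $\sqrt{s-t}$ via a mild Gaussian inflation), and your induction then produces $r!$ in the denominator instead of $\Gamma(1+r/2)$ — a tighter estimate, with both giving the required exponential tail. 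What the paper's route buys is that it reuses Lemma \ref{lem4:boundsH} verbatim and never needs the absorption/inflation step; what your route buys is an elementary, self-contained induction avoiding the Beta–Gamma machinery.

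Two small remarks. First, your "semigroup-type estimate" $\int\bar q_N(t_i^N,t_k^N,x,z)\bar q_N(t_k^N,t_j^N,z,y)\,\mathrm dz\leq C\bar q_N(t_i^N,t_j^N,x,y)$ actually holds with equality and constant $1$, since $\bar q_N$ is the literal transition density of the diffusion \eqref{eq:majdiff2} and Chapman–Kolmogorov applies; you do not need \eqref{eq:boundmajdiff2} or (A5) for that step. Second, your absorption step $|x-\theta^N_{t,s}(y)|\cdot(s-t)^{-1/2}\tilde q_N\leq C\,\bar q_N$ implicitly uses that $\bar q_N$ admits a Gaussian lower bound of Aronson type wide enough to dominate the inflated Gaussian; this is available because $\lambda$ in \eqref{eq:majdiff2} is chosen large and the drift is Lipschitz and bounded after truncation, but it is worth stating explicitly since the paper only records the upper bound \eqref{eq:boundmajdiff2} (the lower bound is the content behind \eqref{eq:boundmajdiffA2}, which the paper takes from \cite{bitt:kona:21}).
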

\begin{lemma} \label{lem3:comptrunc}
{ There exists  a constant $N_0 >0$ such that for $N \geq N_0$,} for $i < j$ it holds  that
 \begin{eqnarray*} &&\left |\sum_{r=0}^N \tilde q_N\otimes_N H_N^{(r)}(t_{i}^ N, t_{j}^ N,x,y)- \sum_{r=0}^N \tilde p_N\otimes_N K_N^{(r)}(t_{i}^ N, t_{j}^ N,x,y) \right |\\
 && \qquad  \leq C \ln(1 / \gamma_1^N) \sqrt {\gamma_1^N}( t_{j}^ N- t_{i}^ N) ^{1/2}\mathcal Q_{M-d-2}(t_{j}^ N-t_{i}^ N, y-\theta_{t_{j}^ N, t_{i}^ N}^N(x)),\end{eqnarray*} 
 where $$K_N( t_{i}^ N, t_{j}^ N,x,y)= (L_{ t_{i}^ N}^N- \tilde L_{ t_{i}^ N}^N) \tilde p_N(t_{i+1}^ N, t_{j}^ N,x,y).$$
 The convolutions $K_N^{(r)}$ are calculated using the convolution $\otimes_N$, in contrast to $H_N^{(r)}$ where as above the convolution operation $\otimes$ is used.
\end{lemma}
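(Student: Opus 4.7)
The plan is to compare the two parametrix series term by term, using (i) a pointwise local limit estimate for the frozen-chain density $\tilde p_N$ against the Gaussian frozen-diffusion density $\tilde q_N$, and (ii) an analogous bound for the kernel difference $|K_N-H_N|$ obtained by inserting the operator $L_t^N-\tilde L_t^N$. I would first establish
\[
|\tilde p_N(t_i^N,t_j^N,x,y)-\tilde q_N(t_i^N,t_j^N,x,y)|\leq C\sqrt{\gamma_1^N}\,\mathcal Q_{M-d-2}\bigl(t_j^N-t_i^N,\,y-\theta^N_{t_j^N,t_i^N}(x)\bigr).
\]
Up to the deterministic flow shift $\theta^N_{t_i^N,t_j^N}(y)-x$, the density $\tilde p_N$ equals the density $p_{S_N}$ of the normed sum $\sum_{\ell=i}^{j-1}\sqrt{\gamma_{\ell+1}^N}\xi_\ell$ of independent centered variables whose marginals $f_{t_\ell^N,\cdot}$ are five times smooth with polynomially decaying derivatives of order $M>2d+6$ by (A4), while $\tilde q_N$ is the Gaussian density with the same shift and covariance $\int_{t_i^N}^{t_j^N}R(\bar\theta_u)\,du$. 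A classical Edgeworth expansion via Fourier inversion then delivers the bound: the leading error comes from the third-cumulant term $\sum_\ell(\sqrt{\gamma_\ell^N})^3/(t_j^N-t_i^N)^{3/2}=O(\sqrt{\gamma_1^N})$, and the tail $\mathcal Q_{M-d-2}$ survives because $M$ orders of Fourier integrability are available, of which $d+2$ are spent on passing from Fourier to pointwise tails.

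Second, I would differentiate this pointwise bound up to second order in $x$, losing two further orders of polynomial decay but keeping the $\sqrt{\gamma_1^N}$ rate. Since $K_N-H_N=(L_t^N-\tilde L_t^N)(\tilde p_N-\tilde q_N)$ and $L_t^N-\tilde L_t^N$ is a first-order differential operator in $x$ with coefficient
\[
F_N(t,x)\chi_N(x)-F_N(t,\theta^N_{t,s}(y))\chi_N(\theta^N_{t,s}(y)),
\]
bounded by $C\ln(1/\gamma_1^N)$ on the support of $\chi_N$ (the factor $\ln(1/\gamma_1^N)=a_N$ being the diameter of $\mathrm{supp}\,\chi_N$) and Lipschitz in $x$ by (A3) and (A5), this gives
\[
|K_N-H_N|(t_i^N,t_j^N,x,y)\leq C\sqrt{\gamma_1^N}\ln(1/\gamma_1^N)\,\mathcal Q_{M-d-4}\bigl(t_j^N-t_i^N,\,y-\theta^N_{t_j^N,t_i^N}(x)\bigr).
\]
This step uses Lemma \ref{lem:boundsec31 2} from the supplementary material.

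Third, I would telescope each term of the series by
\[
\tilde q_N\otimes_N H_N^{(r)}-\tilde p_N\otimes_N K_N^{(r)}=(\tilde q_N-\tilde p_N)\otimes_N H_N^{(r)}+\tilde p_N\otimes_N(H_N^{(r)}-K_N^{(r)}),
\]
and further expand $H_N^{(r)}-K_N^{(r)}=\sum_{s=0}^{r-1}H_N^{(s)}\otimes_N(H_N-K_N)\otimes_N K_N^{(r-1-s)}$, producing $r+1$ summands each containing exactly one factor of either $\tilde q_N-\tilde p_N$ or $H_N-K_N$. The standard parametrix tail bound $|\tilde q_N\otimes_N H_N^{(r)}|\leq C^{r+1}(t_j^N-t_i^N)^{r/2}/\Gamma(r/2+1)\cdot\mathcal Q_{M-d-2}$ and its $(\tilde p_N,K_N)$-analogue then imply that each summand picks up one extra $\sqrt{\gamma_1^N}\ln(1/\gamma_1^N)$ factor plus one $(t_j^N-t_i^N)^{1/2}$ factor from the convolution attached to the small term. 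The factorial series $\sum_{r\geq 0}(r+1)C^{r+1}(t_j^N-t_i^N)^{r/2}/\Gamma(r/2+1)$ converges and produces the announced bound.

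The main obstacle is the derivative-level Edgeworth bound for $|K_N-H_N|$. Differentiating $\tilde p_N$ twice in $x$ consumes two orders of smoothness, and retaining a polynomial tail of order $M-d-2$ after Fourier inversion is only possible because (A4) grants five smooth polynomially-decaying derivatives of $f_{t,x}$ with $M>2d+6$. The truncation by $\chi_N$ is essential here: it tames the otherwise unbounded Robbins-Monro drift highlighted in the abstract, at the cost of the logarithmic factor $a_N$ coming from the diameter of $\mathrm{supp}\,\chi_N$. Once these two pointwise estimates are secured, the parametrix summation is a routine adaptation of the arguments of Konakov and Mammen (2000).
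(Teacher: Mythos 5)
Your overall strategy mirrors the paper's: an Edgeworth-type local limit bound on $\tilde p_N-\tilde q_N$ (the paper's Lemma \ref{lem:boundsec31 2}), a first-order-operator argument turning this into a bound on $H_N-K_N$ (the paper's \eqref{eq:lem3cent1a}--\eqref{eq:lem3cent6}), and a parametrix-style iteration. However, there is a genuine gap in the telescoping step that would invalidate the final bound as you wrote it.

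Your identity
\[
H_N^{(r)}-K_N^{(r)}=\sum_{s=0}^{r-1}H_N^{(s)}\otimes_N(H_N-K_N)\otimes_N K_N^{(r-1-s)}
\]
is the standard telescoping formula, but it only holds when \emph{both} iterated kernels are built with the same convolution. Here they are not: the lemma explicitly states that the $H_N^{(r)}$ appearing in $\tilde q_N\otimes_N H_N^{(r)}$ is iterated with the continuous-time operation $\otimes$, whereas $K_N^{(r)}$ is iterated with the discrete-time operation $\otimes_N$. The difference $H_N^{(r)}-K_N^{(r)}$ therefore contains an additional contribution of the form $H_N^{(r)}-H_N^{(r),\otimes_N}$ (the same kernel $H_N$, once iterated with $\otimes$ and once with $\otimes_N$), which does not involve $H_N-K_N$ at all and is not captured by your formula. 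The paper's proof splits into three pieces — a discretization piece $\tilde q_N\otimes_N(H_N^{(r)}-H_N^{(r),\otimes_N})$, a density piece $(\tilde q_N-\tilde p_N)\otimes_N H_N^{(r),\otimes_N}$, and a kernel piece with one $H_N-K_N$ insertion — and the first piece requires its own Taylor/Kolmogorov-equation estimate (the bound \eqref{eq:claim1added}, proved by the same technique as \eqref{eq:claim1} in Lemma \ref{lem1:comptrunc}). Without this third piece the argument does not close. A secondary slip: $L_t^N-\tilde L_t^N$ is a purely first-order operator (the second-order parts of $L$ and $\tilde L$ are identical), so you need only one $x$-derivative of $\tilde p_N-\tilde q_N$ rather than two; and your stated local-limit bound drops the factor $(t_j^N-t_i^N)^{-1/2}$ that your own Edgeworth cumulant computation produces — this factor is essential for the Beta-function integrals that make the parametrix sum converge.
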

\begin{lemma} \label{lem4:comptrunc}
{ There exists  a constant $N_0 >0$ such that for $N \geq N_0$,} for $i < j$ it holds  that
 \begin{eqnarray*} &&\left |\sum_{r=0}^N  \tilde p_N\otimes_N  K_N^{(r)}(t_{i}^ N, t_{j}^ N,x,y) - \sum_{r=0}^N \tilde p_N\otimes_N \mathcal K_N^{(r)}(t_{i}^ N, t_{j}^ N,x,y) \right |\\
 && \qquad  \leq C \sqrt {\gamma_1^N} \ln {(1/\gamma_1^N)}  \mathcal Q_{M-d-6}(t_{j}^ N-t_{i}^ N, y-\theta_{t_{j}^ N, t_{i}^ N}^N(x)).\end{eqnarray*} 
\end{lemma}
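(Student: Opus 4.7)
The strategy is to estimate the single-step discrepancy $(K_N - \mathcal K_N)(t_i^N, t_j^N, x, y)$ and then propagate this bound through the convolution series by a telescoping argument. Since both $K_N$ and $\mathcal K_N$ arise from the action of a difference of two generators on the frozen density $\tilde p_N^y$, and differ only through the replacement of the continuous generators $L^N_{t_i^N}, \tilde L^N_{t_i^N}$ by their discrete one-step counterparts $\mathcal L_N, \tilde{\mathcal L}_N$, the analysis reduces to quantifying how well the actual one-step Markov kernel $p_N(t_i^N, t_{i+1}^N, x, \cdot)$ matches the infinitesimal generator $L^N_{t_i^N}$, and analogously for the frozen chain.

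First I would establish a pointwise bound of the form
\begin{equation*}
|(K_N - \mathcal K_N)(t_i^N, t_j^N, x, y)| \leq C \sqrt{\gamma_1^N} \ln(1/\gamma_1^N) (t_j^N - t_{i+1}^N)^{-1/2} \mathcal Q_{M-d-6}(t_j^N - t_i^N, y - \theta^N_{t_j^N, t_i^N}(x))
\end{equation*}
by Taylor-expanding $\tilde p_N(t_{i+1}^N, t_j^N, z, y)$ to order three around $z = x$ and substituting the resulting expansion into the integrals defining $\mathcal L_N$ and $\tilde{\mathcal L}_N$. The moment identities in (A4) together with the Robbins--Monro recursion \eqref{DYN_GEN} cause the zeroth, first, and second order contributions to cancel against the drift and diffusion terms of the continuous generators, leaving a cubic remainder of size $O(\sqrt{\gamma_1^N})$ whose coefficient involves the finite third moments of the normalised innovations guaranteed by the polynomial decay $Q_M$. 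The extra factor $\ln(1/\gamma_1^N)$ originates from the Lipschitz constants of $F_N$ and $\chi_N$ over the truncation ball $\|x\| \leq a_N + 1$ where $a_N = \ln(1/\gamma_1^N)$; the derivative bounds on $\tilde p_N$ of the form $\mathcal Q_{M-d-6}$ follow from the same Edgeworth-type estimates already used in the proof of Lemma \ref{lem3:comptrunc}.

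Next I would propagate the one-step estimate to the full truncated series via the telescoping identity
\begin{equation*}
\tilde p_N \otimes_N K_N^{(r)} - \tilde p_N \otimes_N \mathcal K_N^{(r)} = \sum_{s=1}^r \tilde p_N \otimes_N K_N^{(s-1)} \otimes_N (K_N - \mathcal K_N) \otimes_N \mathcal K_N^{(r-s)}.
\end{equation*}
Combining the one-step bound with the kernel estimates on $K_N^{(j)}$ and $\mathcal K_N^{(j)}$ already established in proving Lemma \ref{lem3:comptrunc}, together with the convolution stability of $\mathcal Q$-type kernels under the operation $\otimes_N$ (which yields Beta-function factors in the time variable), each summand is majorised by $C \sqrt{\gamma_1^N} \ln(1/\gamma_1^N) \mathcal Q_{M-d-6}(t_j^N - t_i^N, \cdot)$ times an $r$-dependent time weight. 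Summing over $s$ and then over $0 \leq r \leq N$ converts these weights into the absolutely convergent series $\sum_r (t_j^N - t_i^N)^{r/2} / \Gamma((r+1)/2)$, which is uniformly bounded on $[0,T]$, and this delivers the claimed bound.

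The main obstacle will be the one-step estimate. One must control the cubic Taylor remainder when expanding against the exact law $p_N(t_i^N, t_{i+1}^N, x, \cdot)$ of a single truncated Robbins--Monro increment, which is not Gaussian, and ensure that the third moment of the innovation is absorbed into the $\mathcal Q_{M-d-6}$ factor without further polynomial loss. A secondary subtlety is to verify that the change of reference point going from $K_N$ (which freezes $\tilde p_N$ at $x$ through the generator $L^N_{t_i^N}$) to $\mathcal K_N$ (which uses $\tilde p_N^y$ frozen at $y$), combined with the Lipschitz dependence of the flow $\theta^N_{t,s}(y)$ on $y$, does not deteriorate the decay in the variable $y - \theta^N_{t_j^N, t_i^N}(x)$; this is handled by splitting the relevant integrals into a near-diagonal Gaussian core of width $\sqrt{\gamma_{i+1}^N}$ and a polynomial tail region, each of which is controlled by a slightly weaker $\mathcal Q$-type kernel, thereby accounting for the loss from $Q_{M-d-2}$ in Lemma \ref{lem3:comptrunc} to $Q_{M-d-6}$ here.
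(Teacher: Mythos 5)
Your overall two-step strategy — establish a pointwise bound on the single-step discrepancy $M_N=\mathcal K_N-K_N$, then propagate it through the series by a telescoping/recursive decomposition using the semigroup-type stability of the $\mathcal Q$-kernels — is exactly the structure the paper uses (the paper's Eq.~\eqref{eq:iterMN} is the unrolled form of your telescoping sum). However, the content of your one-step estimate is not correct, and this is the heart of the proof.

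First, the singularity you claim, $(t_j^N-t_{i+1}^N)^{-1/2}$, is wrong: the true one-step bound is
\begin{equation*}
|M_N(t_i^N,t_j^N,x,y)|\;\leq\;C\,\frac{\sqrt{\gamma_{i+1}^N}}{t_j^N-t_i^N}\,\mathcal Q_{M-d-6}\bigl(t_j^N-t_i^N,\,\theta^N_{t_i^N,t_j^N}(y)-x\bigr),
\end{equation*}
with a full $(t_j^N-t_i^N)^{-1}$ singularity and \emph{no} logarithm. The logarithm in the statement of Lemma~\ref{lem4:comptrunc} arises only after the time convolution $\sum_k \gamma_{k+1}^N\,\sqrt{\gamma_{k+1}^N}/(t_j^N-t_k^N)\sim\sqrt{\gamma_1^N}\int du/u\sim\sqrt{\gamma_1^N}\ln(1/\gamma_1^N)$; it is a product of the integrable but logarithmically divergent singularity, not of Lipschitz constants of $F_N,\chi_N$ over the truncation ball as you suggest. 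Building the log into the one-step bound and weakening the singularity to $-1/2$ both misidentify where the leading error comes from, and if your claimed bound were true the final estimate would improve by an extra factor $\sqrt{t_j^N-t_i^N}$, which the paper does not obtain.

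Second, your description of the Taylor cancellation is an oversimplification. After third-order expansion of $\tilde p_N^y$ about $x$, the zeroth/first/second order terms do \emph{not} cancel exactly against the continuous generators; they leave residuals (the paper's $T_1,T_2,T_3$) arising from the mismatch between the flow increment $\Delta_i\theta^N(y)$ and $\gamma_{i+1}^N F_N(t_i^N,\theta^N_{\cdot})\chi_N(\theta^N_{\cdot})$, from the Lipschitz variation of $R$, and from second-order drift cross-terms. Each of these contributes the same $\sqrt{\gamma_{i+1}^N}/(t_j^N-t_i^N)$ factor, on a par with the cubic remainder $T_4$; none of them is negligible. Third, your one-step bound is undefined at $j=i+1$ (the factor $(t_j^N-t_{i+1}^N)^{-1/2}$ blows up), whereas the paper has to treat the adjacent case separately, showing $\mathcal K_N(t_i^N,t_{i+1}^N,x,y)\leq C(\gamma_{i+1}^N)^{-1/2}\mathcal Q_{M-d-6}(\cdots)$ by direct comparison of the two one-step densities; this step is essential to start the recursion and your proposal does not account for it.
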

{ Lemma \ref{lem3:comptrunc}  will be proved in Subsection \ref{sec:lem27}. The proofs of the other lemmas of this subsection can be found in the online supplement of the paper.}

Theorem \ref{prop:comptrunc}  can be used for getting a result on the distributions of the truncated diffusion and truncated Robbins-Monro procedure on an increasing grid of time points. With $m_N \geq 1$, $z_1,...,z_{m_N},x \in \R^d$, $z$, $\tau^N_j$, $Q_{N,x}^{m_N}$ and $P_{N,x}^{m_N}$ defined as in the subsections \ref{subsec:main} and
\ref{subsec:compdiff} we get the following corollary of Proposition \ref{prop:compdiff} for the Hellinger distance and L$_1$-distance between the measures $Q_{x}^{m_N}$ and $Q_{N,x}^{m_N}$. The proof of this result can  be found in Section \ref{sec:comptrunc}. 

\begin{proposition}\label{prop:comptrunccorr} Suppose that \eqref{eq:tau} holds. With a measure  $\nu$ that dominates $P_{N,x}^{m_N}$ and $Q_{N,x}^{m_N}$ it holds for $x \in \R^d$ with $|x| \leq a_N/2$ { and $N \geq 1$,} that 
\begin{eqnarray*}
&& \int \left | \frac { \mathrm d Q_{N,x}^{m_N}}{\mathrm d \nu } 
-  \frac {  \mathrm d P_{N,x}^{m_N}}{ \mathrm d \nu }
\right | \mathrm d \nu  \leq C   \sqrt{\gamma_1^N} m_N \ln^2(1 / \gamma_1^N) \leq C  m_N N^{-\beta/2} \ln^2(N)  \end{eqnarray*}\end{proposition}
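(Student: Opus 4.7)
The plan is to reduce the joint $L^1$ bound to the one-step pointwise bound of Theorem \ref{prop:comptrunc} by a standard Markov telescoping. Since the truncated diffusion $X_t^N$ and the truncated chain $V_{t_k^N}^N$ are both Markov, and since by hypothesis every $\tau_j^N$ lies in the RM grid $\{t_0^N,\dots,t_{M(N)-1}^N\}$, I take $\nu$ to be Lebesgue measure on $(\R^d)^{m_N}$ and factor the joint densities as
$$\frac{\mathrm d Q_{N,x}^{m_N}}{\mathrm d \nu}(z)=\prod_{j=1}^{m_N} q_j,\qquad \frac{\mathrm d P_{N,x}^{m_N}}{\mathrm d \nu}(z)=\prod_{j=1}^{m_N} p_j,$$
where $q_j=q_N(\tau_{j-1}^N,\tau_j^N,z_{j-1},z_j)$, $p_j=p_N(\tau_{j-1}^N,\tau_j^N,z_{j-1},z_j)$, $\tau_0^N=0$, and $z_0=x$.

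Then I apply the algebraic identity
$$\prod_{j=1}^{m_N} q_j-\prod_{j=1}^{m_N} p_j=\sum_{j=1}^{m_N}\Big(\prod_{k<j}q_k\Big)(q_j-p_j)\Big(\prod_{k>j}p_k\Big)$$
and estimate each of the $m_N$ summands separately. For the $j$-th summand, integrating successively over $z_{m_N},z_{m_N-1},\dots,z_{j+1}$ against $\int p_N(\tau,\tau',w,w')\,\mathrm d w'=1$ produces a factor of one at each step and reduces the term to $\int \big(\prod_{k<j}q_k\big)|q_j-p_j|\,\mathrm d z_1\cdots\mathrm d z_j$. Theorem \ref{prop:comptrunc}, applied with $(t_i^N,t_j^N)=(\tau_{j-1}^N,\tau_j^N)$, yields
$$\int |q_j-p_j|\,\mathrm d z_j\leq C\sqrt{\gamma_1^N}\ln^2(1/\gamma_1^N)\int \mathcal Q_{M-d-6}\big(\tau_j^N-\tau_{j-1}^N,\,z_j-\theta^N_{\tau_j^N,\tau_{j-1}^N}(z_{j-1})\big)\,\mathrm d z_j,$$
and the substitution $u=(\tau_j^N-\tau_{j-1}^N)^{-1/2}(z_j-\theta^N_{\tau_j^N,\tau_{j-1}^N}(z_{j-1}))$, combined with the definition of $\mathcal Q_m$, turns the remaining integral into $\int Q_{M-d-6}(u)\,\mathrm d u=1$, using $M>2d+6$ from (A4) so that $Q_{M-d-6}$ is a probability density. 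The resulting bound $C\sqrt{\gamma_1^N}\ln^2(1/\gamma_1^N)$ is uniform in $z_{j-1}$, and integrating out $z_{j-1},\dots,z_1$ against $\int q_N\,\mathrm d w=1$ produces further factors of one.

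Summing the $m_N$ identical bounds gives $\int|\prod q_j-\prod p_j|\,\mathrm d\nu\leq C m_N\sqrt{\gamma_1^N}\ln^2(1/\gamma_1^N)$, and the estimates $\sqrt{\gamma_1^N}\leq C N^{-\beta/2}$ and $\ln(1/\gamma_1^N)\leq C\ln N$ from \eqref{eq:gammdef} yield the stated form. Condition \eqref{eq:tau} is used only to guarantee the existence of the grid; it plays no quantitative role here, in contrast to Proposition \ref{prop:compdiffcorr}, where the grid spacing had to be traded against step size. The only genuine obstacle is the bookkeeping of the iterated integrations --- integrating in the correct order so each intermediate step uses a genuine probability density, and verifying that the shape factor $\mathcal Q_{M-d-6}$ has unit mass under the relevant dilation. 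Neither is subtle, and no further estimates are required beyond Theorem \ref{prop:comptrunc} and (A4).
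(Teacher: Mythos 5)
Your proof is correct and follows the same high-level strategy as the paper: telescope the product of transition densities and apply Theorem \ref{prop:comptrunc} to the single difference factor. The concrete execution, however, is simpler and more self-contained than the paper's. The paper integrates out the later variables against $p_N$ and the earlier ones against $q_N$ by Chapman--Kolmogorov, reducing the $i$-th term to $\int q_N(0,\tau^N_{i-1},x,x_{i-1})\,|q_N-p_N|\,\mathrm d x_{i-1}\,\mathrm d x_i$, and then needs two auxiliary ingredients to close the estimate: Lemma \ref{lem:boundqNadd} (a Gaussian upper bound for $q_N$, and the one place where $|x|\le a_N/2$ is visibly used) to replace $q_N(0,\tau^N_{i-1},x,\cdot)$ by $\mathcal Q_{M-d-6}$, and the convolution inequality \eqref{eq:lem3cent5} to integrate the product of two $\mathcal Q_{M-d-6}$ factors. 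You avoid both: by integrating $z_j$ \emph{first} against the pointwise bound of Theorem \ref{prop:comptrunc} you get a constant $C\sqrt{\gamma_1^N}\ln^2(1/\gamma_1^N)$ uniform in $z_{j-1}$ (because $\int\mathcal Q_{M-d-6}(t,\cdot-a)=1$ by scaling and $M>2d+6$), after which the remaining $q_k$'s and $p_k$'s each integrate to one. Your observation that \eqref{eq:tau} plays no quantitative role in the inequality itself — only in making $\delta_N\to 0$ — agrees with what the paper actually uses. In short: same decomposition and same key lemma, but a cleaner ordering of the integrations that trims two ancillary estimates.
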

In particular, we have that the upper bound in the proposition converges to 0 if $m_N$ is of the form $m_N = C N^\mu$ with { $\mu <  \beta/2$.}

\section{Proofs} \label{sec:proofs}
 { From now on inequalities with lower and upper bounds depending on $N$ shall be understood as being valid  for $N \geq N_0$ where $N_0$ is chosen large enough. If the upper bound depends only on $N$ this implies that the inequality holds for all $N\geq 1$.}
\subsection{Some bounds} \label{sec:somebounds}
In this subsection we will state some bounds that will be used in the proofs in the following
subsections. The proofs of the lemmas of this subsection can be found in Subsection \ref{sec:proofsbounds}.
The first lemma states   that $F_N(t_k^N,x) \chi_{N}(x)$ is uniformly Lipschitz in $x \in \R^d$ for $N$ large enough:

\begin{lemma} \label{lem:boundsec31 1}   { With some constant $L>0$  that only depends on  the upper bounds on the first and second derivatives of $h$ introduced  in Assumption (A3) but in particular not  on $x$, $y$ and $N$, it holds  for $x,y \in \R^d$ and $N$ large enough }that
\begin{eqnarray}\label{eqapp5}&&\|F_N(t_k^N,x) \chi_{N}(x)-F_N(t_k^N,y) \chi_{N}(y) \| \leq L  \|x-y\|.
\end{eqnarray} 
\end{lemma}

Furthermore, in the following subsections we will make use of the following inequalities stated in the next lemma.
\begin{lemma} \label{lem:boundsec31 2}  For $0 \leq |\nu| \leq 4$ and $z,y \in \R^d$ it holds that
 \begin{eqnarray}
\label{eq:lem3cent2}&& |D_z^{\nu} (\tilde p_N - \tilde q_N)(t_i^N, t_j^N,z,y)| 
\leq C {\sqrt {\gamma_1 ^N}} \left (   {( t_j^N-t_i^N)^{-(|\nu|+1)/2}}+ (t_j^N-t_i^N)^{1- |\nu|/2}a_N \right ) 
 \\ \nonumber && \quad  \quad  \quad \times 
\mathcal Q_{M-d-1}(t_j^N-t_i^N,\theta^N_{t_i^N,t_j^N}(y)-z)
,\\
\label{eq:lem3cent2add} && |D_z^{\nu} \varphi(t_i^N,  t_j^N,z,y)|
\leq C ( t_j^N-t_i^N)^{-|\nu|/2} \mathcal Q_{M-d-1}(t_j^N-t_i^N,\theta^N_{t_i^N,t_j^N}(y)-z)\end{eqnarray}
for $\varphi = \tilde q_N$ and $\varphi = \tilde p_N$. 
\end{lemma}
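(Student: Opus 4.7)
The plan is to prove the bound by a two-stage comparison that inserts an auxiliary Gaussian whose covariance matches that of the Markov-chain noise exactly. Both quantities are functions of the common argument $w := \theta_{t_i^N, t_j^N}^N(y) - z$: the explicit formulas in Subsection \ref{subsec:parametrix} give $\tilde p_N(t_i^N, t_j^N, z, y) = p_{S_N}(w)$, the density of $-\sum_{k=i}^{j-1}\sqrt{\gamma_{k+1}^N}\,\xi_k$ at $w$ with $\xi_k := \xi(\bar\theta_{t_k^N} + \chi_N(\theta_{t_k^N, t_j^N}^N(y))\sqrt{\gamma_k^N}, \eta_{k+1})$, whereas $\tilde q_N(t_i^N, t_j^N, z, y) = g_{\bar\sigma(t_i^N, t_j^N)}(w)$. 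Since the $\xi_k$ are independent and centered by (A4) with covariances $R_k := R(\bar\theta_{t_k^N} + \chi_N(\theta_{t_k^N, t_j^N}^N(y))\sqrt{\gamma_k^N})$, I insert the intermediate Gaussian $g_{\Sigma_N}$ with $\Sigma_N := \sum_{k=i}^{j-1}\gamma_{k+1}^N R_k$ and split via triangle inequality as $p_{S_N} - g_{\bar\sigma} = (p_{S_N} - g_{\Sigma_N}) + (g_{\Sigma_N} - g_{\bar\sigma})$.

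For the first piece I apply an Edgeworth-type local limit theorem for sums of independent non-identically distributed random variables. Using that each $f_{t,x}$ has five continuous derivatives with $Q_M$ decay by (A4), Fourier analysis of $\hat p_{S_N}(\xi) = \prod_k \hat f_{t_k^N, \cdot}(\sqrt{\gamma_{k+1}^N}\,\xi)$ combined with a Taylor expansion
\begin{equation*}
\log\hat p_{S_N}(\xi) = -\tfrac12 \xi^\top\Sigma_N\xi + O\Bigl(|\xi|^3 \sum_k\gamma_{k+1}^{3/2}\Bigr),
\end{equation*}
the elementary bound $\sum_k\gamma_{k+1}^{3/2} \le \sqrt{\gamma_1^N}(t_j^N - t_i^N)$, and Fourier inversion yield
\begin{equation*}
|D_w^\nu (p_{S_N} - g_{\Sigma_N})(w)| \le C \sqrt{\gamma_1^N}\,(t_j^N - t_i^N)^{-(|\nu|+1)/2}\mathcal Q_{M-d-1}(t_j^N - t_i^N, w).
\end{equation*}
This is the first term of \eqref{eq:lem3cent2}. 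The loss of $d+1$ orders (from $Q_M$ to $Q_{M-d-1}$) is the standard cost of passing from smoothness in Fourier space to tail decay after $|\nu|$ differentiations in $d$ dimensions.

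For the second piece I use Gaussian perturbation theory. Writing $g_{\Sigma_N}(w) - g_{\bar\sigma}(w) = \int_0^1 \partial_\tau g_{\Sigma_\tau}(w)\,d\tau$ with $\Sigma_\tau := \bar\sigma + \tau(\Sigma_N - \bar\sigma)$ and $\partial_\tau\log g_{\Sigma_\tau}(w) = -\tfrac12\mathrm{tr}(\Sigma_\tau^{-1}\Delta) + \tfrac12 w^\top \Sigma_\tau^{-1}\Delta\Sigma_\tau^{-1}w$ with $\Delta := \Sigma_N - \bar\sigma$, the bound reduces to controlling $\|\Delta\|$ and the quadratic form in $w$. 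Splitting $\|\Delta\|$ into a Riemann-sum discretisation error and a Lipschitz term, using (A5) together with $|\chi_N|\le a_N$, yields $\|\Sigma_N - \bar\sigma\| \le C a_N\sqrt{\gamma_1^N}(t_j^N - t_i^N)$. Incorporating $|\nu|$ further Gaussian derivatives and bounding the polynomial-times-Gaussian envelope by $\mathcal Q_{M-d-1}$ produces the second term $C a_N\sqrt{\gamma_1^N}(t_j^N-t_i^N)^{1-|\nu|/2}\mathcal Q_{M-d-1}$ (with the $(t_j^N-t_i^N)$ factor arising from the accumulated noise-integration scale of $\Sigma_N - \bar\sigma$). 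Triangle inequality then yields \eqref{eq:lem3cent2}. The second bound \eqref{eq:lem3cent2add} follows directly from standard Gaussian derivative estimates for $\varphi = \tilde q_N$, and for $\varphi = \tilde p_N$ by transferring derivatives to $\hat p_{S_N}$ via Fourier inversion while inheriting the $Q_M$-type tail through convolution of the $\xi_k$-densities.

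The main obstacle will be the first step: establishing a local limit theorem with uniform control of $|\nu|\le 4$ derivatives for sums of independent but non-identically distributed summands whose parameters vary with $k$, and recovering the polynomial envelope $\mathcal Q_{M-d-1}$ rather than a merely Gaussian one. The five-derivative smoothness in (A4) is used sharply here: it is exactly what is needed so that $\hat f_{t,x}$ (and hence $\hat p_{S_N}$) has enough decay to absorb four derivatives after Fourier inversion and still leave enough smoothness to control the cubic Edgeworth remainder; the bookkeeping of the polynomial order as we iterate Taylor expansions and invert Fourier is the delicate point. Once the local limit estimate is in hand, the Gaussian perturbation step and the final triangle inequality are routine.
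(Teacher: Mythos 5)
Your proposal takes essentially the same route as the paper: introduce the intermediate Gaussian with covariance $W_{i,j}=\sum_{k=i}^{j-1}\gamma_{k+1}^N R_k$ (your $\Sigma_N$), bound $\tilde p_N - g_{W_{i,j}}$ by a local Edgeworth/CLT estimate for sums of independent non-identically distributed vectors (the paper cites Theorem 19.3 of Bhattacharya--Rao, whose proof is precisely the Fourier argument you sketch), bound $\|W_{i,j}-\bar\sigma\|\le C(t_j^N-t_i^N)a_N\sqrt{\gamma_1^N}$, and conclude the Gaussian--Gaussian comparison by perturbation in the covariance; derivatives up to order $4$ are handled, as in the paper, by multiplying the characteristic functions by $(-i\xi)^\nu$. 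This matches the paper's proof in both decomposition and tools, differing only in that you carry out the Edgeworth estimate by hand rather than citing the reference.
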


 We now compare the flow $\theta_{t,s}^N$  defined in \eqref{eq:defthetaN}  with the flow  $\theta_{t,s}$ ($0 \leq t \leq s$) that is defined as the solutions of the following ordinary differential equations
\begin{eqnarray} \label{eq:deftheta}
\frac {\mathrm d}{\mathrm d t} \theta_{t,s}(y) = \left (\bar \alpha I - \mathcal D h(\bar \theta^N_t) \right )\theta_{t,s}(y)
\end{eqnarray}
with terminal condition
$\theta_{s,s}(y)=y$. The following lemma collects bounds for and between $ \theta_{t,T}(y)$, $ \theta^N_{t,T}(y)$, and $y$.
\begin{lemma} \label{em2:boundstheta} For all $t,t_k^N \in [0,T]$ and for all $x,y \in \R^d$ the following bounds hold
with a constant $C> 1$, depending only on $T$.
\begin{eqnarray} \label{eq:lem3cent5a}
 && C^{-1} \|\theta^N_{s,t}(y) - x\| \leq   \|\theta^N_{t,s}(x) - y\|  \leq  C \|\theta^N_{s,t}(y) - x\|, \\
  \label{eq:lem3cent5b}
 && C^{-1} \|\theta^N_{s,t}(y) \| \leq   \| y\|  \leq  C \|\theta^N_{s,t}(y)\|,\\
 \label{eq:boundstheta1} &&C^{-1} \|y\| \leq \|\theta_{t,T}(y) \| \leq C \|y\|. \end{eqnarray} 
 \end{lemma} 

\begin{lemma} \label{lem4:boundsH} For all $t,v \in [0,T]$ and for all $x,y \in \R^d$ the following bounds hold with some constant $C>0$
\begin{eqnarray} \label{eq:boundsH1} &&\left | H^{(r)} (t,v,x,y)\right| \leq C^r \frac {\Gamma^r(1/2)}{\Gamma(r/2)} (v-t) ^{(r-d-2)/2} \exp \left ( - \frac {(x - \theta_{t,v}(y))^2}{C|v-t|}\right ), \\
\label{eq:boundsH2} &&\left | H_N^{(r)} (t,v,x,y)\right| \leq C^r \frac {\Gamma^r(1/2)}{\Gamma(r/2)} (v-t) ^{(r-d-2)/2}\exp \left ( - \frac {(x - \theta_{t,v}^N(y))^2}{C|v-t|}\right ), \\
\label{eq:boundsH3} &&\left | \left( H^{(r)}- H_N^{(r)}\right ) (t,v,x,y)\right| \\
\nonumber &&\qquad \leq (r+1) C^{r+1} \frac {\Gamma^r(1/2)}{\Gamma(r/2)} \ln^2(1/\gamma_1^N)\sqrt{\gamma_1^N} (v-t) ^{{(r-d-2)/2} }(1+ |y|) \exp \left ( - \frac {(x - \theta_{t,v}^N(y))^2}{C|v-t|}\right ), 
\end{eqnarray}
where  $\Gamma(z) = \int_0^\infty t^{z-1} e^{-t} \mathrm d t$ is the Gamma function.
\end{lemma}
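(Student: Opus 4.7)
The plan is to prove \eqref{eq:boundsH1} and \eqref{eq:boundsH2} in parallel by induction on $r$, and then to derive \eqref{eq:boundsH3} from a telescoping identity.

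For the base case $r=1$, the kernel $H_N(t,v,x,y) = (L_t^N - \tilde L_t^N)\tilde q_N(t,v,x,y)$ is a linear combination of first-order derivatives $\partial_{x_i}\tilde q_N$ weighted by the differences $\bigl[F_N(t,x)\chi_N(x) - F_N(t,\theta^N_{t,v}(y))\chi_N(\theta^N_{t,v}(y))\bigr]_i$. Lemma \ref{lem:boundsec31 1} bounds these differences by $L\|x-\theta^N_{t,v}(y)\|$, while \eqref{eq:lem3cent2add} gives $|\partial_x \tilde q_N| \le C(v-t)^{-1/2} \mathcal Q_{M-d-1}(v-t,\theta^N_{t,v}(y)-x)$. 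The product absorbs one half-power of $(v-t)^{-1/2}$ through the Lipschitz factor $\|x-\theta^N_{t,v}(y)\|$ (by the elementary inequality $s\exp(-s^2/C) \le C'\exp(-s^2/(2C))$), yielding the Gaussian envelope form of \eqref{eq:boundsH2} at $r=1$. The same argument applied to $\tilde q$ instead of $\tilde q_N$ gives \eqref{eq:boundsH1} at $r=1$.

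For the inductive step I write $H_N^{(r+1)}(t,v,x,y) = \int_t^v\!\!\int_{\R^d} H_N^{(r)}(t,u,x,z)\,H_N(u,v,z,y)\,\mathrm d z\,\mathrm d u$ and insert the inductive bounds. The spatial convolution uses the Gaussian identity together with the flow semigroup property $\theta^N_{t,v}(y) = \theta^N_{t,u}(\theta^N_{u,v}(y))$: with a slightly enlarged constant $C'$,
\begin{equation*}
\int_{\R^d}\exp\!\Bigl(-\tfrac{|x-\theta^N_{t,u}(z)|^2}{C(u-t)}\Bigr)\exp\!\Bigl(-\tfrac{|z-\theta^N_{u,v}(y)|^2}{C(v-u)}\Bigr)\mathrm d z \le C\bigl(\tfrac{(u-t)(v-u)}{v-t}\bigr)^{d/2}\exp\!\Bigl(-\tfrac{|x-\theta^N_{t,v}(y)|^2}{C'(v-t)}\Bigr),
\end{equation*}
which is justified through the two-sided comparison \eqref{eq:lem3cent5a}--\eqref{eq:lem3cent5b} and the Lipschitz control of $F_N\chi_N$. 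The residual time integral is of beta type, $\int_t^v (u-t)^{(r-1)/2-1/2}(v-u)^{-1/2}\mathrm d u = (v-t)^{r/2-1}B(r/2,1/2)$, which combines with the spatial prefactor to give $(v-t)^{((r+1)-d-2)/2}$ and, via $B(r/2,1/2) = \Gamma(r/2)\Gamma(1/2)/\Gamma((r+1)/2)$, propagates the constant $\Gamma^{r+1}(1/2)/\Gamma((r+1)/2)$. This closes \eqref{eq:boundsH2}, and \eqref{eq:boundsH1} follows by the same argument with $\theta_{t,v}$ and $\tilde q$ in place of their $N$-indexed versions.

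For \eqref{eq:boundsH3} I would use the telescoping identity
\begin{equation*}
H^{(r)} - H_N^{(r)} = \sum_{j=0}^{r-1} H^{(j)} \otimes (H - H_N) \otimes H_N^{(r-1-j)},
\end{equation*}
which together with the estimates \eqref{eq:boundsH1}--\eqref{eq:boundsH2} already proved will produce the combinatorial factor $(r+1)C^{r+1}\Gamma^r(1/2)/\Gamma(r/2)$ once the standard beta bookkeeping is carried out for each summand. The key single-step bound is
\begin{equation*}
|(H-H_N)(t,v,x,y)| \le C\,\ln^2(1/\gamma_1^N)\sqrt{\gamma_1^N}\,(1+|y|)\,(v-t)^{-(d+1)/2}\exp\!\Bigl(-\tfrac{|x-\theta^N_{t,v}(y)|^2}{C(v-t)}\Bigr),
\end{equation*}
which splits into three contributions: (i) the difference $\tilde q - \tilde q_N$ of the frozen Gaussian densities, controlled by \eqref{eq:lem3cent2} and contributing $\sqrt{\gamma_1^N}$ along with one factor $a_N = \ln(1/\gamma_1^N)$; (ii) the difference between the coefficients appearing in $L-\tilde L$ and $L_t^N-\tilde L_t^N$, controlled via \eqref{eq:boundstheta4} and producing the second logarithmic factor together with the linear-in-$|y|$ term (through \eqref{eq:boundstheta1}); and (iii) the Lipschitz bound \eqref{eqapp5} to handle the truncation $\chi_N$. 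Convolution with $H^{(j)}$ and $H_N^{(r-1-j)}$ is then performed exactly as in the inductive step.

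The main obstacle is the spatial Gaussian convolution in the inductive step: one needs that composing two Gaussian exponentials along the nonlinear flow $\theta^N$ still produces a Gaussian exponential shifted by the composed flow, up to a controlled loss in the constant. Because the flow is only Lipschitz (not affine), the square-completion argument rests crucially on the two-sided comparison \eqref{eq:lem3cent5a}--\eqref{eq:lem3cent5b} and the Gronwall-type control implicit in Lemma \ref{lem:boundsec31 1}. A secondary difficulty is tracking the two independent sources of $\ln(1/\gamma_1^N)$ in \eqref{eq:boundsH3}: one from the $a_N$-factor in the derivative bounds of Lemma \ref{lem:boundsec31 2} and one from \eqref{eq:boundstheta4} in the case $\beta=1$; together they produce the $\ln^2$ factor.
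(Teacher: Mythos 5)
Your proof takes a genuinely different route from the paper's, and it is worth spelling out the contrast because it isolates exactly the point you flag as "the main obstacle."

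The paper does not iterate Gaussian envelopes directly. Instead it introduces two auxiliary dominating diffusions (\eqref{eq:majdiff1}--\eqref{eq:majdiff2}) with a deliberately oversized constant diffusion coefficient $\lambda I$, and it proves the intermediate bounds \eqref{eq:boundsH1A}--\eqref{eq:boundsH3A} with the right-hand sides expressed in terms of the \emph{exact transition densities} $\bar p$ and $\bar q_N$ of those auxiliary diffusions, rather than in terms of a generic Gaussian-shaped envelope. The point of this device is that the spatial convolution in the inductive step is then handled by the Chapman--Kolmogorov identity $\int_{\R^d} \bar q_N(t,u,x,z)\bar q_N(u,v,z,y)\,\mathrm d z = \bar q_N(t,v,x,y)$, which is \emph{exact}: no constant is lost, no square-completion along a nonlinear flow is needed, and the only work is to estimate derivatives of $\bar q_N$ against $\bar q_N$ itself (which is \eqref{eq:boundmajdiffA1}--\eqref{eq:boundmajdiffA2}, imported from the literature). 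The Gaussian envelope bound \eqref{eq:boundmajdiff1}--\eqref{eq:boundmajdiff2} is invoked only once, at the very end, to convert $\bar p$ and $\bar q_N$ into the form stated in the lemma. In other words, the paper trades the delicate Gaussian-convolution-along-a-flow argument for a semigroup argument.

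Your route, by contrast, carries the Gaussian envelope directly through the induction. The arithmetic of the time-powers and the $\Gamma$-factors is correct (the beta-function bookkeeping checks out, as does the reindexed telescoping identity for \eqref{eq:boundsH3}). What is not secured, and is precisely the obstacle you name in your final paragraph, is that the constant $C'$ emerging from your displayed convolution estimate is strictly larger than $C$, so that naive iteration produces a sequence of constants $C, C', C'', \dots$ that could diverge with $r$ — and the bound must hold uniformly in $r$. The resolution along your route is to replace the fixed constant $C$ by a time-dependent one of the form $C_0 e^{2L(v-t)}$, where $L$ is the Lipschitz constant from Lemma \ref{lem:boundsec31 1}; the two-sided comparison \eqref{eq:lem3cent5a}--\eqref{eq:lem3cent5b} has precisely this $e^{L|u-t|}$ structure, and one can check that $e^{2L(u-t)}\bigl[(u-t) + e^{2L(v-u)}(v-u)\bigr] \le e^{2L(v-t)}(v-t)$, so the induction closes with a constant bounded by $C_0 e^{2LT}$ for all $r$. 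You gesture at this ("the Gronwall-type control implicit in Lemma \ref{lem:boundsec31 1}") but you do not carry it out, and without it the step-by-step loss of constant is not obviously summable. The paper's auxiliary-diffusion device is exactly what spares one this bookkeeping, which is presumably why it is chosen. So: same telescoping structure for \eqref{eq:boundsH3}, same sources of the two $\ln$-factors and the $(1+|y|)$ factor, but a different and less robust mechanism for the inductive convolution step; if you wish to keep your direct route, the time-dependent Gaussian constant needs to be made explicit.
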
 
The following bound follows from Theorem 1.2 in \cite{menpeszha21}.
\begin{lemma} \label{lem:boundqNadd} 
For $s < t$ and $x,y \in \R^d$  it holds that
 \begin{eqnarray*}
q_N (s,t,x,y) \leq C (t-s)^{-d/2} \exp \left ( - C \frac {(y- \theta^N_{t,s}(x))^2} {t-s}\right ). \end{eqnarray*}
\end{lemma}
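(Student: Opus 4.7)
The plan is to apply Theorem~1.2 of \cite{menpeszha21} directly to the SDE defining $X_t^N$, so the work reduces to verifying its hypotheses. Write the SDE in the form
\begin{equation*}
\mathrm d X_t^N = b_N(t,X_t^N)\,\mathrm d t + \sigma(t)\,\mathrm d B_t,
\end{equation*}
with $b_N(t,x) = F_N(t,x)\chi_{N}(x)$ and $\sigma(t) = R^{1/2}(\bar\theta_t)$. A convenient feature is that $\sigma$ depends only on time, so the spatial regularity of the diffusion coefficient is trivial; it therefore remains only to control its ellipticity and boundedness, and the Lipschitz regularity of the drift.

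For the diffusion coefficient, Assumption (A5) gives that $R(\theta)$ has smallest eigenvalue bounded away from $0$ and is Lipschitz with a uniform constant on a tubular neighborhood of $\{\bar\theta_t:t\in[0,T]\}$, so $\sigma(t)$ is bounded, uniformly elliptic, and continuous on $[0,T]$ — in particular it satisfies the non-degeneracy and regularity hypotheses of the cited theorem. For the drift, Lemma~\ref{lem:boundsec31 1} yields, for $N$ large enough, a global Lipschitz estimate in $x$ for $b_N$ with a constant $L$ that does not depend on $N$; boundedness of $b_N$ follows from the boundedness of $\chi_{N}$ together with the assumed bounds on $h$ and its derivatives.

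Once these hypotheses are in place, Theorem~1.2 of \cite{menpeszha21} delivers a Gaussian upper bound on the transition density of $X_t^N$ centered at the deterministic flow of the drift, that is, the solution of $\dot\varphi_u = b_N(u,\varphi_u)$, $\varphi_s = x$. This is exactly $\theta^N_{t,s}(x)$ as defined in \eqref{eq:defthetaN}, yielding precisely the form
$q_N(s,t,x,y)\leq C(t-s)^{-d/2}\exp\bigl(-C(y-\theta^N_{t,s}(x))^2/(t-s)\bigr)$.

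The one delicate point to verify is that the constant $C$ can be chosen independently of $N$: both $b_N$ and $\chi_{N}$ depend on $N$, so a priori the Aronson-type constants produced by \cite{menpeszha21} could degrade as $N\to\infty$. This is the only real obstacle, but it is resolved by observing that the constants in that theorem depend on the data only through the ellipticity bounds on $\sigma$ (fixed by (A5)), the Lipschitz constant of the drift (uniform in $N$ by Lemma~\ref{lem:boundsec31 1}), and the terminal time $T$. Hence $C$ can be taken uniform in $N$ for all $N$ sufficiently large, completing the proof.
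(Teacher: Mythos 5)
Your proposal is correct and matches the paper's approach exactly: the paper's entire ``proof'' is the single remark that the bound follows from Theorem 1.2 in \cite{menpeszha21}, and your write-up simply makes explicit the hypothesis verification that the authors left implicit. One small wording quibble: the drift $b_N=F_N\chi_N$ is bounded only for each fixed $N$ (its sup norm grows like $a_N=\ln(1/\gamma_1^N)$), so what actually keeps the Gaussian constants uniform in $N$ is not boundedness of the drift but the $N$-uniform Lipschitz constant from Lemma~\ref{lem:boundsec31 1} together with the fact that the bound is centered at the drift flow $\theta^N_{t,s}(x)$ — a point you do get right in your final paragraph.
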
 

{ We conclude this subsection by stating the following simple lemma which follows from
the following inequalities
\begin{eqnarray*}  
&&(1 + |u+v|) ^{-M} \leq (1 + |u|/2) ^{-M} \leq 2^M (1 + |u|) ^{-M} \qquad \text{ for } |u| > 2 |v|, \\
&&(1 + |u+v|) ^{-M} \leq 1 \leq  (1 + 2 |v|) ^{M} (1 + |u|) ^{-M} \qquad \text{ for } |u| \leq  2 |v| .\end{eqnarray*}}

\begin{lemma} \label{lem:boundsec31 1add} 
For $r \geq 1$ and $t >0$, $z,\delta \in \R^d$ it holds that
 \begin{eqnarray}
\label{eq:Qr1}\mathcal Q_{r}(t,z + \delta )&\leq& C {}\mathcal Q_{r}(t,z  )( 1 + \| t^{-1/2} \delta\|) ^r,\\ 
\label{eq:Qr2}\| t^{-1/2} z\| \mathcal Q_{r}(t,z  )&\leq& C {}\mathcal Q_{r-1}(t,z  ).\end{eqnarray}
\end{lemma}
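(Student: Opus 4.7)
The plan is to unpack the definitions $Q_r(w) = c_r(1+\|w\|)^{-r}$ and $\mathcal Q_r(t,z) = t^{-d/2} Q_r(t^{-1/2} z) = c_r t^{-d/2} (1+\|t^{-1/2} z\|)^{-r}$, and then verify each bound as an elementary inequality for the scalar function $s\mapsto (1+s)^{-r}$. The main tools are the triangle inequality in $\R^d$ and the trivial bound $s \leq 1+s$. There is no real obstacle; this is a bookkeeping lemma that records two standard manipulations used repeatedly in the parametrix estimates.

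For \eqref{eq:Qr1}, I would introduce the rescaled variables $u = t^{-1/2} z$ and $v = t^{-1/2} \delta$, cancel the common factor $c_r t^{-d/2}$, and reduce the claim to
\begin{equation*}
(1+\|u+v\|)^{-r} \leq C(1+\|u\|)^{-r} (1+\|v\|)^{r},
\end{equation*}
which, upon inverting and taking $r$-th roots, becomes $1 + \|u\| \leq C^{1/r}(1+\|u+v\|)(1+\|v\|)$. Expanding the right-hand side gives $1 + \|u+v\| + \|v\| + \|u+v\|\|v\|$, and the triangle inequality $\|u\| \leq \|u+v\| + \|v\|$ shows $C = 1$ is enough.

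For \eqref{eq:Qr2}, after the same substitution $u = t^{-1/2} z$ the factor $t^{-d/2}$ cancels and the inequality reduces to the one-variable statement
\begin{equation*}
\|u\|\, c_r\, (1+\|u\|)^{-r} \leq C\, c_{r-1}\, (1+\|u\|)^{-(r-1)},
\end{equation*}
that is, $\|u\| \leq (C c_{r-1}/c_r)(1+\|u\|)$. Since $\|u\|\leq 1+\|u\|$, any $C \geq c_r/c_{r-1}$ works. I would then absorb the ratio of the normalizing constants into the generic constant $C$ used throughout the paper, completing the proof.
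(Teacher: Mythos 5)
Your proof is correct. For \eqref{eq:Qr1} the paper instead splits into the cases $\|u\| > 2\|v\|$ and $\|u\| \leq 2\|v\|$, bounding $(1+\|u+v\|)^{-r}$ separately in each regime with a constant $2^r$; you get the same conclusion (with $C=1$ and no case split) from the single observation that $(1+\|u+v\|)(1+\|v\|) \geq 1 + \|u+v\| + \|v\| \geq 1+\|u\|$ by the triangle inequality, which is slightly cleaner. For \eqref{eq:Qr2} the paper does not spell out the argument, but your reduction to $\|u\| \leq 1+\|u\|$ together with absorbing the ratio $c_r/c_{r-1}$ into the constant is exactly what is needed and is the obvious completion. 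Both approaches are elementary and deliver the same bounds; yours avoids the case distinction and makes the $r$-th-root cancellation explicit, which is a minor simplification over the paper's version.
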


 \subsection{Proof of Proposition \ref{TRUNC}} \label {sec:TRUNC}

We introduce the exit time $$\tau^N_{a_N}=\inf\{k\in [1, M(N)]:\|V_{t_k^N}^N\| \geq a_N\}$$
and consider the processes $U_{t_k^N}^N$ and $V_{t_k^N}^N$ for $k \leq \tau^N_{a_N}$. { We get that { 
\begin{eqnarray*}\left\| {U}_{t_{k + 1}^{N}}^{N} - {V}_{t_{k + 1}^{N}}^{N} \right\| &\leq& \left\| {U}_{t_{k}^{N}}^{N} - {V}_{t_{k}^{N}}^{N} \right\|   + \left\| G_{N}\left( t_{k}^{N},U_{t_{k}^{N}}^{N} \right)  U_{t_{k}^{N}}^{N}{ \gamma}_{k + 1}^{N} - G_{N}\left( t_{k}^{N},U_{t_{k}^{N}}^{N} \right)  {\chi_{N}(V}_{t_{k}^{N}}^{N}){ \gamma}_{k + 1}^{N} \right\|\\
&& \qquad + \left\| G_{N}\left( t_{k}^{N},U_{t_{k}^{N}}^{N} \right)  {\chi_{N}(V}_{t_{k}^{N}}^{N}){ \gamma}_{k + 1}^{N} - F_{N}\left( t_{k}^{N},\ V_{t_{k}^{N}}^{N} \right)  {\chi_{N}(V}_{t_{k}^{N}}^{N}){ \gamma}_{k + 1}^{N} \right\| \\
&& \qquad + C\sqrt{{ \gamma}_{k}^{N}{ \gamma}_{k + 1}^{N}}\ \left\| U_{t_{k}^{N}}^{N}\  - \chi_{N}\left( V_{t_{k}^{N}}^{N} \right) \right\| + \left\| \beta_{k + 1}^{N} \right\| \\
&\leq& (1 + C\gamma_{k + 1}^{N})\left\| U_{t_{k}^{N}}^{N\ }\  - V_{t_{k}^{N}}^{N\ } \right\| + a_{N\ }\gamma_{k + 1}^{N}\left\| G_{N}(t_{k}^{N},U_{t_{k}^{N}}^{N\ })- G_{N}(t_{k}^{N},V_{t_{k}^{N}}^{N\ }) \right\|
\\ && \qquad + a_{N\ }\gamma_{k + 1}^{N}\left\| G_{N}(t_{k}^{N},V_{t_{k}^{N}}^{N\ })-F_{N}(t_{k}^{N},V_{t_{k}^{N}}^{N\ })\right)\| + \left\| \beta_{k + 1}^{N} \right\| 
\\&\leq &(1 + C\gamma_{k + 1}^{N})\left\| U_{t_{k}^{N}}^{N\ }\  - V_{t_{k}^{N}}^{N\ } \right\|{+ a}_{N\ }\gamma_{k + 1}^{N}\left\| G_{N}(t_{k}^{N},V_{t_{k}^{N}}^{N\ })\  - \ F_{N}(t_{k}^{N},V_{t_{k}^{N}}^{N\ }) \right\| \\ && \qquad+ \left\| \beta_{k + 1}^{N} \right\|.\end{eqnarray*}

For the second summand in the r.h.s. we have the upper bound (note that
\(\chi_{N}(V_{t_{k}^{N}}^{N}) = V_{t_{k}^{N}}^{N}\)
for \(k \leq \tau_{a_{N}}^{N}\) ).
\[a_{N}{ \gamma}_{k + 1}^{N}\left\| G_{N}\left( t_{k}^{N},V_{t_{k}^{N}}^{N} \right) - F_{N}\left( t_{k}^{N},V_{t_{k}^{N}}^{N} \right) \right\| \leq a_{N}{ \gamma}_{k + 1}^{N}\left| \alpha_{t_{k}^{N}}^{N}\  - \overline{\alpha} \right| + C{{{a}_{N}  (\gamma}_{k + 1}^{N})}^{2}  \alpha_{t_{k}^{N}}^{N}.\]
We now  argue that
 \begin{eqnarray*} && \sqrt {\gamma_{k}^N / \gamma_{k+1}^N} -1 \leq C  (\gamma_1^N)^{\beta^{-1}}, \label{inrev1}\\
 && | \bar \alpha - \alpha_{t_{k+1}^N}^N| \leq C I_{\{\beta=1\}} \gamma_1^N + C I_{\{\frac 1 2 <\beta<1\}}  (\gamma_1^N)^{\beta^{-1}-1},\label{inrev2}   \end{eqnarray*} 
 which can be easily shown.
 This implies that $$a_{N}{ \gamma}_{k + 1}^{N}\left| \alpha_{t_{k}^{N}}^{N}\  - \overline{\alpha} \right| \leq Ca_{N}\mathbb{I}_{\{\beta = 1\}}{{(\gamma}_{k + 1}^{N})}^{2}\  + C{a_{N}\mathbb{I}}_{\{\frac{1}{2}\  < \beta < 1\}}{(\gamma_{k + 1}^{N}\ )}^{\beta^{- 1}}. $$ We now use this together with the fact that \(\left\| \beta_{k + 1}^{N} \right\|\) is bounded by a constant times
\({{(\gamma}_{k + 1}^{N})}^{3/2}\) and we get that 
\begin{eqnarray*}\left\| {U}_{t_{k + 1}^{N}}^{N} - {V}_{t_{k + 1}^{N}}^{N} \right\| &\leq& (1 + C{ \gamma}_{k + 1}^{N})\left\| U_{t_{k}^{N}}^{N} - {V}_{t_{k}^{N}}^{N} \right\| + C{{(\gamma}_{k + 1}^{N})}^{3/2}\  + C{a_{N}\mathbb{I}}_{\{\frac{1}{2}\  < \beta < 1\}}{(\gamma_{k + 1}^{N}\ )}^{\beta^{- 1}}. \end{eqnarray*}

  Now, by definition of $M(N)$ we have that $ \gamma_{1}^N + ...+ \gamma_{M(N)-1}^N < T$. Because of $k< M(N)$ for $k <\tau^N_{a_N}$ we have for some constants $C'$, $C''$ that
\begin{eqnarray*} \left \|U_{t_{k+1}^N}^N-V_{t_{k+1}^N}^N\right \| &\leq& (1 + C' \gamma_{M(N)}^N) \cdot  ... \cdot  (1 + C' \gamma_{1}^N) \sum_{l=1} ^{M(N)} C' ( (\gamma_{l}^{N})^{3/2}  + a_{N}\mathbb{I}_{\{\frac{1}{2}  < \beta < 1\}} (\gamma_{l}^{N})^{\beta^{- 1}})\\
&\leq&C'' ( (\gamma_{1}^{N})^{1/2}  + a_{N}\mathbb{I}_{\{\frac{1}{2}  < \beta < 1\}} (\gamma_{1}^{N})^{\beta^{- 1}-1}),\end{eqnarray*}
where $M(N) \leq C (\gamma^{N}_1) ^{-1}$ has been used. 
We  conclude that}
\begin{eqnarray*}&&\mathbb P \left (\sup_{1 \leq k \leq M(N)} \left \| U_{t_k^N}^N - V_{t_k^N}^N \right \| > C'' ( (\gamma_{1}^{N})^{1/2}  + a_{N}\mathbb{I}_{\{\frac{1}{2}  < \beta < 1\}} (\gamma_{1}^{N})^{\beta^{- 1}-1})\right )\\
&& \qquad  \leq  \mathbb P \left (\tau^N_N < M(N)\right)
\\
&& \qquad = \mathbb P \left (\sup_{1 \leq k \leq M(N)-1} \left \| V_{t_k^N}^N  \right \| > a_N\right )\\
&& \qquad \leq \mathbb P \left (\sup_{1 \leq k \leq M(N)} \left \| V_{t_k^N}^N  \right \| > a_N\right ).\end{eqnarray*}}
As mentioned after the statement of our assumptions the process $U_{t}^N$ converges in distribution to the diffusion $(X_t: 0 \leq t \leq T)$ defined in \eqref{EDS_limit}. The same holds for the process $V_{t}^N$, see  Lemma 11.2.1, Theorem 10.2.2 and Theorem 11.2.3  in \cite{stro:vara:79}. Now for any $\epsilon > 0$ there exists a level $K_\epsilon$ with 
$$  \mathbb  P(\sup_{0 \leq t \leq T} \|X_t\| \geq K_\epsilon) \leq \epsilon.$$
This shows that the upper bound $\mathbb P \left (\sup_{1 \leq k \leq M(N)} \left \| V_{t_k^N}^N  \right \| > a_N\right )$ converges to 0 because of $a_N \to \infty$ for $N\to \infty$ which concludes the proof of the proposition.  \hfill $\square$
 
  \subsection {Proof of Lemma \ref{lem3:comptrunc}} \label{sec:lem27} 
 We will show\begin{eqnarray}
\label{eq:lem3cent1a} |(H_N -K_N)(t_i^N,  t_j^N,z,y)| &\leq& C |z - \theta_{t_i^N, t_j^N}^N(y)|\  |\nabla_z (\tilde p_N - \tilde q_N)(t_i^N,  t_j^N,z,y)|,\\
\label{eq:lem3cent3}
|H_N(t_i^N, t_j^N,z,y)| &\leq&  C  \mathcal Q_{M-d-1}(t_j^N-t_i^N,z-\theta^N_{t_i^N,t_j^N}(y)),\\
\label{eq:lem3cent4} |(\tilde p_N \otimes_N K^{(r)}_N)(t_i^N, t_j^N, z,y)| &\leq& \frac {(C(t_j^N- t_i^N))^{r}}{r!} \mathcal Q_{M-d-1}(t_j^N- t_i^N,z- \theta_{t_i^N, t_j^N}^N(y)),
\end{eqnarray}
and with some constant $\bar c$ and with $m= M-d-5-\gamma$
\begin{eqnarray}
\label{eq:lem3cent5} && \int_{\mathbb R ^d} \mathcal Q_{m}(t_k^N-t_i^N,z-\theta^N_{t_k^N,t_i^N}(x))\mathcal Q_{m}(t_j^N-t_k^N,y-\theta^N_{t_j^N,t_k^N}(z)) \mathrm d z \\
\nonumber && \qquad \leq  \bar c \mathcal Q_{m}(t_j^N-t_i^N,y-\theta^N_{t_j^N,t_i^N}(x))
\end{eqnarray}
for all $1 \leq i < k \leq j, x,y \in \mathbb R ^d$. In the proof of the lemma we will make repeated use of Lemma \ref{lem:boundsec31 2}. We will use it to bound the right hand side of 
\eqref{eq:lem3cent1a} and in the proof of \eqref{eq:lem3cent4}. At both places we replace the Gaussian densities $\tilde q_N$ by the densities $\tilde p_N$ of normed sums of independent random variables. By application of the lemma we get with the help of \eqref{eq:Qr2} from \eqref{eq:lem3cent1a} that
\begin{eqnarray}
\label{eq:lem3cent6}
|(H_N -K_N)(t_i^N,  t_j^N,z,y)| &\leq&  C \left ( \frac {\sqrt {\gamma_1 ^N}} {\sqrt{ t_j^N-t_i^N}}+ (t_j^N-t_i^N)a_N \sqrt{\gamma_1^N}\right ) \\
\nonumber && \quad \times \mathcal Q_{M-d-2}(t_j^N-t_i^N,\theta^N_{t_i^N,t_j^N}(y)-z).
\end{eqnarray}
We now show that \eqref{eq:lem3cent3}--\eqref{eq:lem3cent6} imply the statement of the lemma. 
For a proof of this claim we write 
\begin{eqnarray}
\label{eq:lem3cent7}
\tilde q_N\otimes_N H_N^{(r+1)}( t_i^N, t_j^N,x,y)-  \tilde p_N\otimes_N K_N^{(r+1)}( t_i^N, t_j^N,x,y) =I + II,
\end{eqnarray}
where 
\begin{eqnarray*}
I&=& (\tilde q_N\otimes_N H_N^{(r)}
-  \tilde p_N\otimes_N K_N^{(r)})\otimes_N H_N( t_i^N, t_j^N,x,y),\\
II&=& \tilde p_N\otimes_N K_N^{(r)} \otimes_N (H_N
- K_N)( t_i^N, t_j^N,x,y).
\end{eqnarray*}

For a discusssion of the second term II note that we get directly from \eqref{eq:lem3cent4}--\eqref{eq:lem3cent6} that
\begin{eqnarray*}\big | II \big |&\leq& \sum_{k=i}^{j-1} \gamma_{k+1}^N \int_{\mathbb R ^d} |\tilde p_N\otimes_N K_N^{(r)} | (t_i^N,t_k^N,x,z) |H_N
- K_N|(t_k^N,t_j^N,z,y) \mathrm d z \\
&\leq& a_N \sqrt {\gamma_1^N}  \frac {C^{r+1}}{r!}  \sum_{k=i}^{j-1} \frac {(t_k^N-t_i^N)^{r}}{\sqrt{t_j^N- t_k^N}}\gamma_{k+1}^N \int_{\mathbb R ^d}\mathcal Q_{M-d-1}(t_k^N- t_i^N,x-\theta^N_{t_i^N,t_k^N}(z))\\
&& \qquad \times  \mathcal Q_{M-d-1}(t_j^N-t_k^N,\theta^N_{t_k^N,t_j^N}(y)-z)  \mathrm d z\\
&\leq& a_N \sqrt {\gamma_1^N}  \frac {C^{r+1}}{r!}  \sum_{k=i}^{j-1} \frac {(t_k^N-t_i^N)^{r}}{\sqrt{t_j^N- t_k^N}}\gamma_{k+1}^N \mathcal Q_{M-d-1}(t_j^N-t_i^N,y-\theta^N_{t_j^N,t_i^N}(x))\\
&\leq& a_N \sqrt {\gamma_1^N}  \frac {C^{r+1}}{r!}  (t_j^N-t_i^N)^{r+ \frac 1 2 } B\left ( \frac 1 2, r+1\right )
\mathcal Q_{M-d-1}(t_j^N-t_i^N,y-\theta^N_{t_j^N,t_i^N}(x))
\\
&\leq& a_N \sqrt {\gamma_1^N}  \frac {C^{r+2}}{\Gamma \left ( r+\frac 3 2 \right )} (t_j^N-t_i^N)^{r+ \frac 1 2 } \mathcal Q_{M-d-1}(t_j^N-t_i^N,y-\theta^N_{t_j^N,t_i^N}(x)),
\end{eqnarray*}
where $B(z_1,z_2) = \int_0^1 t^{z_1-1} (1-t)^{z_2-1} \mathrm d t$ is the Beta function and $\Gamma$ is the Gamma function, see the statement of Lemma \ref{lem4:boundsH}.
We can apply this inequality to show that
\begin{eqnarray}\label{eq:lem3tool1}&&\sum_{r=0}^\infty  \left |\tilde p_N\otimes_N K_N^{(r)} \otimes_N (H_N
- K_N)(t_i^N, t_j^N,x,y)\right |\\ \nonumber && \qquad \leq a_N (t_j^N-t_i^N)^{1/2}\sqrt {\gamma_1^N} \mathcal Q_{M-d-1}(t_j^N-t_i^N,y-\theta^N_{t_j^N,t_i^N}(x)).
\end{eqnarray}
We now write with  $H_N^{(k),\otimes_N}$ similarly defined as $H_N^{(k)}$ as a $k$-times convolution of $H_N$ but now with using the convolution operator $\otimes_N$ instead of $\otimes$
\begin{eqnarray*}
&&\left|\sum_{r=0} ^N \tilde q_N\otimes_N H_N^{(r)}(t_i^N,t_j^N,x,y)-  \tilde p_N\otimes_N K_N^{(r)}(t_i^N, t_j^N,x,y) \right|\\
&& \qquad = \left|\sum_{r=0} ^N \tilde q_N \otimes_N (H_N^{(r)} -H_N^{(r), \otimes_N})(t_i^N, t_j^N,x,y)\right.\\
&& \qquad \qquad + \left. \sum_{r=0} ^N (\tilde q_N -  \tilde p_N)\otimes_N H_N^{(r), \otimes_N}(t_i^N, t_j^N,x,y)\right.\\
&& \qquad \qquad - \left .\sum_{r=1} ^N \sum_{k=0} ^{r-1} \tilde p_N\otimes_N K_N^{(k)} \otimes_N (H_N
- K_N) \otimes_N H_N^{(r-1-k), \otimes_N} (t_i^N, t_j^N,x,y)
\right|
\\
&& \qquad \leq  \left|\tilde q_N \right | \otimes_N \sum_{r=0} ^N \left |H_N^{(r)}-H_N^{(r), \otimes_N}\right|(t_i^N,t_j^N,x,y)\\
&& \qquad \qquad +  \left|\tilde q_N -  \tilde p_N\right | \otimes_N \sum_{r=0} ^N \left |H_N^{(r), \otimes_N}\right| (t_i^N,t_j^N,x,y)\\
&& \qquad \qquad + \sum_{k\geq 0}  \left |\tilde p_N\otimes_N K_N^{(k)} \otimes_N (H_N
- K_N) \right | \otimes_N \sum_{r=0} ^N \left |H_N^{(r), \otimes_N} 
\right| (t_i^N, t_j^N,x,y),
\end{eqnarray*}
where 
$$ \tilde p_N\otimes_N (  H_N^{(r), \otimes_N}-K_N^{(r)})= \sum_{k=0} ^{r-1} \tilde p_N\otimes_N K_N^{(k)} \otimes_N (H_N
- K_N) \otimes_N H_N^{(r-1-k), \otimes_N}$$
for $r\geq 1$ and 
$$ \tilde p_N\otimes_N (  H_N^{(r), \otimes_N}-K_N^{(r)})=0$$
for $r=0$ has been used. 

The first term of this upper bound can be bounded as follows.
 \begin{eqnarray} \label{eq:claim1added} \left|\tilde q_N \right | \otimes_N \sum_{r=0} ^N \left |H_N^{(r)}-H_N^{(r), \otimes_N}\right|(t_i^N,t_j^N,x,y) \leq C \ln^2(1 / \gamma_1^N) \sqrt {\gamma_1^N} \sqrt { t_{j}^ N- t_{i}^ N} \bar q_N(t_{i}^ N, t_{j}^ N,x,y). \end{eqnarray} 
 This inequality can be shown by similar arguments as the proof of \eqref{eq:claim1} in the proof of Lemma \ref{lem1:comptrunc}.
 
{The statement of the lemma now follows by application of 
\eqref{eq:lem3cent2}, \eqref{eq:boundsH1},
\eqref{eq:lem3tool1}, and \eqref{eq:claim1added}.  Furthermore, we apply  \eqref{eq:claim1chelp} that was proved in the proof of Lemma \ref{lem1:comptrunc}.} It remains to show \eqref{eq:lem3cent1a}--
\eqref{eq:lem3cent5}. 

For a proof of \eqref{eq:lem3cent1a} note that 
\begin{eqnarray*} 
&&(H_N-K_N)(t_i^N, t_j^N, z,y) = (L_{t_i^N}^N - \tilde L_{t_i^N}^N) (\tilde q_N - \tilde p_N)(t_i^N, t_j^N,z,y)\\
&& \qquad =\left (\sum_{i=1}^d \left ( \sum_{j=1} ^d [F_N(t_i^N,z)]_{i,j}[\chi_N(z)]_j\right )    
-\sum_{i=1}^d \left ( \sum_{j=1} ^d [F_N(t_i^N,\theta_{t_i^N,t_j^N} ^N(y))]_{i,j}[\chi_N(\theta_{t_i^N,t_j^N} ^N(y))]_j\right )           
\right )\\
&& \qquad \qquad \times \frac {\partial} {\partial z_i} (\tilde q_N - \tilde p_N)(t_i^N, t_j^N,z,y) .
\end{eqnarray*}
With the help of \eqref{eqapp5} this shows  \eqref{eq:lem3cent1a}.

Claim \eqref{eq:lem3cent3} follows directly by application of  
\eqref{eq:boundsH2}.

We now come to the proof of \eqref{eq:lem3cent5}. For this purpose we 
  show for $0 \leq t < s < u \leq T$, $x,y \in \mathbb R^d$ that with some constant $C> 0$
\begin{eqnarray*}
I (s,t,u,x,y) \leq   C \mathcal Q_{m}(u-t,y-\theta^N_{u,t}(x)),
\end{eqnarray*} 
where $$I(s,t,u,x,y) = \int_{\mathbb R ^d} \mathcal Q_{m}(s-t,z-\theta^N_{s,t}(x))\mathcal Q_{m}(u-s,y-\theta^N_{u,s}(z)) \mathrm d z.$$
This is equivalent to \eqref{eq:lem3cent5}.
For the proof of this claim note first that we get from \eqref{eq:lem3cent5b} that
$$I (s,t,u,x,y) \leq C \int_{\mathbb R ^d} \mathcal Q_{m}(s-t,z-\theta^N_{s,t}(x)) \mathcal Q_{m}(u-s,z-\theta^N_{s,u}(y))\mathrm d z.$$
We now consider two cases: I. $\| y-\theta^N_{u,t}(x)\| \leq \sqrt{u-t}$ and II.  $\| y-\theta^N_{u,t}(x)\| > \sqrt{u-t}$. We start by considering case I. We make the additional assumption that 
$s-t \geq \frac 1 2 (u-t)$. The case $s-t \leq \frac 1 2 (u-t)$ can be treated with the same type of arguments and for this reason its discussion is omitted. Now we get in the latter case:
\begin{eqnarray*} 
&&\mathcal Q_{m}(s-t,z-\theta^N_{s,t}(x)) = \frac 1 {(s-t)^{d/2}  } Q_m\left ( \frac {z-\theta^N_{s,t}(x)} {\sqrt{s-t}}\right )\\
&& \qquad \leq \frac {2 ^{d/2} } {(u-t) ^{d/2}} c_m  \leq \frac {2 ^{d/2} } {(u-t) ^{d/2}} c_m \frac {2 ^m} {\left ( 1 + \frac {\| y - \theta^N_{u,t}(x)\|}{\sqrt {u-t}}\right)^m}\\
&& \qquad = 2 ^{m+d/2}\mathcal Q_{m}(u-t,y-\theta^N_{u,t}(x))
\end{eqnarray*}
This gives in case I the following bound for $I (s,t,u,x,y)$
$$I (s,t,u,x,y) \leq C  \mathcal Q_{m}(u-t,y-\theta^N_{u,t}(x)) \int_{\mathbb R ^d}  \mathcal Q_{m}(u-s,z-\theta^N_{s,u}(y))\mathrm d z  \leq C  \mathcal Q_{m}(u-t,y-\theta^N_{u,t}(x)),$$
which shows \eqref{eq:lem3cent5} for case I. We now consider case II: $\| y-\theta^N_{u,t}(x)\| > \sqrt{u-t}$. We define the following two sets:
\begin{eqnarray*} 
 A_1 &=& \{ z \in \mathbb R^d: \|z - \theta_{s,t}^N(x)\| \geq \frac 1 2 \| \theta_{s,u} ^N(y)- \theta_{s,t}^N(x) \| \}, \\
 A_2 &=& \{ z \in \mathbb R^d: \|z - \theta_{s,u}^N(y)\| \geq \frac 1 2 \| \theta_{s,u} ^N(y)- \theta_{s,t}^N(x) \| \}.
\end{eqnarray*}
It holds $A_1 \cup A_2 =  \mathbb R^d$. We only consider values of $z$ in $A_2$. For such $z$ we get by application of \eqref{eq:lem3cent5b} 
\begin{eqnarray*} 
 && \|z - \theta_{s,u}^N(y)\| \geq \frac 1 2 \| \theta_{s,u} ^N(y)- \theta_{s,t}^N(x) \| \\
 && \qquad =  \frac 1 2 \| \theta_{s,u} ^N(y)- \theta_{s,u} ^N(\theta_{u,t}^N(x))\| \geq C \| y- \theta_{u,t}^N(x)\|.
 \end{eqnarray*}
 This gives in case II
 \begin{eqnarray*} 
 && \int_{A_2} \mathcal Q_{m}(s-t,z-\theta^N_{s,t}(x)) \mathcal Q_{m}(u-s,z-\theta^N_{s,u}(y))\mathrm d z  \\
 && \qquad \leq  \int_{A_2} \mathcal Q_{m}(s-t,z-\theta^N_{s,t}(x))\frac {c_m (u-s) ^{(m-d)/2} }{ \|\theta_{s,u}^N(y) -z\| ^m}\mathrm d z\\
 && \qquad \leq  \int_{A_2} \mathcal Q_{m}(s-t,z-\theta^N_{s,t}(x))\mathrm d z \frac {c_m (u-s) ^{(m-d)/2} }{C^m \|y- \theta_{u,t}^N(x) \| ^m}\\
 && \qquad \leq   c_m (u-t) ^{-d/2} C^{-m} 2^m \left (1+ \|y- \theta_{u,t}^N(x) \|\right ) ^{-m}\\
 && \qquad \leq  \mathcal Q_{m}(u-t,y-\theta^N_{u,t}(x)).
 \end{eqnarray*}
The same bound can be shown for integrals over the set $A_1$. This completes the proof of \eqref{eq:lem3cent5} for case II.

It remains to show \eqref{eq:lem3cent4}.
For a proof of this claim note that 
\begin{eqnarray} \label{eq:boundKN}
&&K_N(t_i^N, t_j^N, z,y) = (L_{t_i^N}^N - \tilde L_{t_i^N}^N)  \tilde p_N(t_i^N, t_j^N,z,y)\\ \nonumber
&& \qquad =\left (\sum_{i=1}^d \left ( \sum_{j=1} ^d [F_N(t_i^N,z)]_{i,j}[\chi_N(z)]_j\right )    
-\sum_{i=1}^d \left ( \sum_{j=1} ^d [F_N(t_i^N,\theta_{t_i^N,t_j^N} ^N(y))]_{i,j}[\chi_N(\theta_{t_i^N,t_j^N} ^N(y))]_j\right )           
\right )\\ \nonumber
&& \qquad \qquad \times \left (\frac {\partial} {\partial z_i} (\tilde p_N- \tilde q_N )+ \frac {\partial} {\partial z_i}  \tilde q_N \right )(t_i^N, t_j^N,z,y) \\ \nonumber
&& \qquad \leq C \left (\frac { \sqrt {\gamma_1^N}} {\sqrt{t_j ^N - t_i ^N}} + (t_j ^N - t_i ^N) a_N \sqrt {\gamma_1^N} \right ) \mathcal Q_{M-d-2} (t_j ^N - t_i ^N, \theta^N_{t_i ^N, t_j ^N}(y)-z)\\ \nonumber
&& \qquad \qquad + C \bar q_N(t_i^N, t_j^N,z,y)\\ \nonumber
&& \qquad \leq C  \mathcal Q_{M-d-2} (t_j ^N - t_i ^N, \theta^N_{t_i ^N, t_j ^N}(y)-z),
\end{eqnarray}
where again Lemma \ref{lem:boundsec31 2} has been used.
With the help of \eqref{eq:lem3cent5}  this gives that
\begin{eqnarray*} 
&&\left | ( \tilde p_N \otimes_N K_N)(t_i^N,t_j^N, x, z) \right | \leq \sum_{k=i}^{j-1} \gamma_{k+1}^N \int_{\R^d} | \tilde p_N (t_i^N,t_k^N, x,v)|  | K_N(t_k^N,t_j^N, v, z)| \mathrm d v\\
&& \qquad \leq C \sum_{k=i}^{j-1} \gamma_{k+1}^N \int_{\R^d}  \mathcal Q_{M-d-2} (t_k ^N-t_i ^N, v-\theta^N_{t_k^N,t_i ^N}(x) )\mathcal Q_{M-d-2} (t_j ^N - t_k ^N, \theta^N_{t_k^N, t_j ^N}(z)-v) \mathrm d v\\
&& \qquad \leq C  \mathcal Q_{M-d-2} (t_j ^N-t_i ^N, z-\theta^N_{t_j ^N,t_i ^N}(x) ) \sum_{k=i}^{j-1} \gamma_{k+1}^N \\
&& \qquad \leq C  \mathcal Q_{M-d-2} (t_j ^N-t_i ^N, z-\theta^N_{t_j ^N,t_i^N}(x) )(t_j^N -t_i ^N).
\end{eqnarray*}
Similarly we get that 
\begin{eqnarray*} 
&&\left | ( \tilde p_N \otimes_N K_N^{(2)})(t_i^N,t_j^N, x, z) \right | \leq \sum_{k=i}^{j-1} \gamma_{k+1}^N \int_{\R^d} |( \tilde p_N \otimes_N K_N)(t_i^N,t_k^N, x,v)|  | K_N(t_k^N,t_j^N, v, z)| \mathrm d v\\
&& \qquad \leq C^2 \sum_{k=i}^{j-1} \gamma_{k+1}^N (t_k^N-t_i ^N)\int_{\R^d}  \mathcal Q_{M-d-2} (t_k ^N-t_i ^N, v-\theta^N_{t_k ^N,t_i^N}(x) )\\
&& \qquad \qquad \times \mathcal Q_{M-d-2} (t_j^N - t_k ^N, \theta^N_{t_k ^N, t_j^N}(z)-v) \mathrm d v\\
&& \qquad \leq \frac {(C (t_j ^N-t_i^N) ) ^2} {2!}\mathcal Q_{M-d-2} (t_j ^N-t_i ^N, z-\theta^N_{t_j ^N,t_i ^N}(x) ),
\end{eqnarray*}
where $C$ is the same constant as in the last inequality. By induction we conclude that
\begin{eqnarray*} 
&&\left | ( \tilde p_N \otimes_N K_N^{(r+1)})(t_i^N,t_j^N, x, z) \right | \leq \sum_{k=i}^{j-1} \gamma_{k+1}^N \int_{\R^d} |( \tilde p_N \otimes_N K_N^{(r)})(t_i^N,t_k^N, x,v)|  | K_N(t_k^N,t_j^N, v, z)| \mathrm d v\\
&& \qquad \leq \frac {C^{r+1} } {r!} \sum_{k=i}^{j-1} \gamma_{k+1}^N (t_k^N-t_i^N) ^r\int_{\R^d}  \mathcal Q_{M-d-2} (t_k ^N- t_i ^N, v-\theta^N_{t_k ^N,t_i ^N}(x) )\\
&& \qquad \qquad  \times \mathcal Q_{M-d-2} (t_j^N - t_k ^N, \theta^N_{t_k ^N, t_j ^N}(z)-v) \mathrm d v\\
&& \qquad \leq  \frac {C^{r+1} } {r!}  \int_0^{t_j ^N-t_i ^N} u^r \mathrm d u\mathcal Q_{M-d-2} (t_j ^N-t_i ^N, z-\theta^N_{t_j^N,t_i ^N}(x) )\\
&& \qquad \leq  \frac {(C(t_j ^N-t_i ^N))^{r+1} } {(r+1)!} \mathcal Q_{M-d-2} (t_j ^N-t_i ^N, z-\theta^N_{t_j^N,t_i ^N}(x) ),
\end{eqnarray*}
which shows \eqref{eq:lem3cent4} and concludes the proof of Lemma \ref{lem3:comptrunc}.
 \hfill $\square$
 
  \subsection {Proof of Proposition \ref{prop:comptrunccorr}} \label{sec:comptrunc}
  Using a telescopic sum, we have with putting $x_0=x$, $x_{m_N}=y$, $\prod_{k=1} ^0 ... = 1$, and $\prod_{k=l+1} ^{l} ... = 1$
  \begin{eqnarray} \label{eq:helpgridbound}
&& \int \left | \frac { \mathrm d Q_{N,x}^{m_N}}{\mathrm d \nu } 
-  \frac {  \mathrm d P_{N,x}^{m_N}}{ \mathrm d \nu }
\right | \mathrm d \nu  \\ \nonumber
&& = \int_{\mathbb R^{dm_N}} \left| \prod_{i=1} ^{m_N} q_N(\tau^N_{i-1}, \tau^N_i, x_{i-1}, x_{i})- \prod_{i=1} ^{m_N} p_N(\tau^N_{i-1}, \tau^N_i, x_{i-1}, x_{i}) \right | \mathrm d x_1 ... \mathrm d x_{m_N }\\ \nonumber
&& \leq \sum_{i=1} ^{m_N} \int_{\mathbb R^{dm_N}} \left|  q_N(\tau^N_{i-1}, \tau^N_i, x_{i-1}, x_{i})- p_N(\tau^N_{i-1}, \tau^N_i, x_{i-1}, x_{i}) \right | \\ \nonumber
&&\qquad \times 
\prod_{k=1} ^{i-1} q_N(\tau^N_{k-1}, \tau^N_k, x_{k-1}, x_{k}) \prod_{l=i+1} ^{m_N} p_N(\tau^N_{l-1}, \tau^N_l, x_{l-1}, x_{l})\mathrm d x_1 ... \mathrm d x_{m_N}
\\ \nonumber
&& \leq \sum_{i=1} ^{m_N} \int_{\mathbb R^{3d}} \left|  q_N(\tau^N_{i-1}, \tau^N_i, x_{i-1}, x_{i})- p_N(\tau^N_{i-1}, \tau^N_i, x_{i-1}, x_{i}) \right | \\ \nonumber
&&\qquad \times 
q_N(0, \tau^N_{i-1}, x, x_{i-1}) p_N(\tau^N_{i}, T, x_{i}, y)\mathrm d x_{i-1} \  \mathrm d x_{i} \  \mathrm d y\\
&& = \sum_{i=1} ^{m_N} \int_{\mathbb R^{2d}} \left|  q_N(\tau^N_{i-1}, \tau^N_i, x_{i-1}, x_{i})- p_N(\tau^N_{i-1}, \tau^N_i, x_{i-1}, x_{i}) \right | \\ \nonumber
&&\qquad \times 
q_N(0, \tau^N_{i-1}, x, x_{i-1}) \mathrm d x_{i-1} \  \mathrm d x_{i} 
.\end{eqnarray}

Now by Lemma \ref{lem:boundqNadd}  we have that  
 \begin{eqnarray*}
  q_N(0, \tau^N_{i-1}, x, x_{i-1}) &\leq& C (\tau^N_{i-1})^{-d/2} \exp ( - C (x_{i-1} -\theta_{\tau^N_{i-1},0}^N(x)) ^2 / \tau^N_{i-1})\\
  &\leq& C 
  \mathcal Q_{M-d-6}(\tau^N_{i-1},x_{i-1}-\theta_{\tau^N_{i-1},  0}^N(x)).
  \end{eqnarray*}
  Furthermore, we have by Theorem \ref{prop:comptrunc} and \eqref{eq:tau} 
 \begin{eqnarray*}&&\left|  q_N(\tau^N_{i-1}, \tau^N_i, x_{i-1}, x_{i})- p_N(\tau^N_{i-1}, \tau^N_i, x_{i-1}, x_{i}) \right |\\
 && \qquad \leq C  \sqrt{\gamma_1^N} \ln^2(1 / \gamma_1^N) \mathcal Q_{M-d-6}(\tau^N_{i}-  \tau^N_{i-1},x_{i}-\theta_{\tau^N_{i}, \tau^N_{i-1}}^N(x_{i-1}))\\
 && \qquad \leq C \sqrt{\gamma_1^N} \ln^2(1 / \gamma_1^N)  \mathcal Q_{M-d-6}(\tau^N_{i}-  \tau^N_{i-1},x_{i}-\theta_{\tau^N_{i}, \tau^N_{i-1}}^N(x_{i-1})).\end{eqnarray*}

Now the statement of the proposition follows by  application of \eqref{eq:lem3cent5}. \hfill $\square$

	\subsection{Proof of Lemma \ref{DYN:MKV:CHAIN}} \label {sec:DYN:MKV:CHAIN}
Note first that
\begin{eqnarray*}
U^N_{t_{k+1}^N} &=& \frac{\theta_{k+1}^N - \bar{\theta}^N_{t_{k+1}^N}}{\sqrt{\gamma^N_{k+1}}}\\
&=&\frac{\theta^N_{k} - \gamma_{k+1}^NH(\theta_{k}^N,\eta_{k+1}^N) - \bar{\theta}^N_{t_{k+1}^N}}{\sqrt{\gamma^N_{k+1}}}\\
&=& \frac{\theta^N_{k} - \bar{\theta}^N_{t_{k}^N}}{\sqrt{\gamma^N_{k}}} \frac{\sqrt{\gamma^N_k}}{\sqrt{\gamma^N_{k+1}}} - \sqrt{\gamma^N_{k+1}} H(\theta_k^N,\eta_{k+1}^N) - \frac{\bar{\theta}^N_{t_{k+1}^N} -\bar{\theta}^N_{t_{k}^N}}{\sqrt{\gamma^N_{k+1}}}\\
&=&U_{t_k}^N  \frac{\sqrt{\gamma^N_k}}{\sqrt{\gamma^N_{k+1}}} - \sqrt{\gamma^N_{k+1}} H(\bar{\theta}^N_{t_{k}^N}+ U_{t_k}^N \sqrt{\gamma^N_{k}},\eta_{k+1}^N) - \frac{\bar{\theta}^N_{t_{k+1}^N} -\bar{\theta}^N_{t_{k}^N}}{\sqrt{\gamma^N_{k+1}}}.
\end{eqnarray*}

Now, we can write:
\begin{eqnarray*}
U_{t_k}^N  \frac{\sqrt{\gamma^N_k}}{\sqrt{\gamma^N_{k+1}}} &=& U_{t_k}^N + U_{t_k}^N  \frac{\sqrt{\gamma^N_k}-\sqrt{\gamma^N_{k+1}}}{\sqrt{\gamma^N_{k+1}}} \\
&=&U_{t_k}^N +  \alpha_{t_k^N}^N U_{t_k}^N   \gamma^N_{k+1}.
\end{eqnarray*}
The dynamics of $U_{t_k}^N $ becomes:
\begin{eqnarray*}
  U^N_{t_{k+1}^N}&=& U_{t_k^N}^N +  \alpha_{t_k}^N U_{t_k^N}^N   \gamma^N_{k+1}- \sqrt{\gamma^N_{k+1}} \Big( H(\bar{\theta}^N_{t_{k}^N}+ U_{t_k^N} ^N\sqrt{\gamma^N_{k}},\eta_{k+1}^N) - h(\bar{\theta}^N_{t_{k}^N}+U_{t_k^N}^N\sqrt{\gamma^N_{k}}
  ) \Big) \\
  &&- \sqrt{\gamma^N_{k+1}} \Big( h(\bar{\theta}^N_{t_{k}^N}+U_{t_k^N}^N\sqrt{\gamma^N_{k}}
  ) - h(\bar{\theta}^N_{t_{k}^N}) \Big) +\sqrt{\gamma^N_{k+1}} \left(- h(\bar{\theta}^N_{t_{k}^N})- \frac{\bar{\theta}^N_{t_{k+1}^N} -\bar{\theta}^N_{t_{k}^N}}{\gamma^N_{k+1}} \right)\\
  &=&U_{t_k^N}^N  + G_N(t_k^N,U_{t_k^N}^N) U_{t_k^N}^N \gamma^N_{k+1} - \sqrt{\gamma^N_{k+1}}  \xi(\bar{\theta}^N_{t_{k}^N}+ U_{t_k^N} ^N\sqrt{\gamma^N_{k}},\eta_{k+1}^N) + \beta_{k+1}^N,
\end{eqnarray*}
which shows the statement of the lemma.  \hfill $\square$

	\subsection{Proof of Lemma \ref{BETA}} \label {sec:BETA} {Here we need that $\| \bar \theta^N_t - \theta^*\|$ is small enough such  that }
 $\|h(\bar \theta^N_t)\| \leq C$ and $\|\mathcal D h(\bar \theta^N_t)\| \leq C$ for some $C>0$ for all $t$.
This gives 
\begin{eqnarray*}\|\beta_{k+1}^N\| &\leq & 
\sqrt{\gamma^N_{k+1}} \left \| -h(\bar{\theta}^N_{t^N_k})- \frac{\bar{\theta}^N_{t^N_{k+1}} -\bar{\theta}^N_{t^N_k}}{\gamma^N_{k+1}} \right\|
 \\ 
 &= & \frac {\sqrt{\gamma^N_{k+1}}} {t^N_{k+1} - t^N_{k}}\left \|  \int _{t^N_{k}}^{t^N_{k+1}} \left (-h(\bar \theta^N_{t^N_k})+h(\bar{\theta}^N_t)\right ) \mathrm d t  \right\|
  \\ 
 &= & \frac {\sqrt{\gamma^N_{k+1}}} {t^N_{k+1} - t^N_{k}}\left \| \int _{t^N_{k}}^{t^N_{k+1}} \int _0^1 \mathcal D h(\bar \theta^N_{t^N_k}+\delta (\bar{\theta}^N_t-\bar \theta^N_{t^N_k}))
 \left (\bar \theta^N_{t^N_k}-\bar{\theta}^N_t\right )  \mathrm d \delta \mathrm d t  \right\|
  \\ 
 &\leq & C \frac {\sqrt{\gamma^N_{k+1}}} {t^N_{k+1} - t^N_{k}} \int _{t^N_{k}}^{t^N_{k+1}} \left \|\bar \theta^N_{t^N_k}-\bar{\theta}^N_t\right\| \mathrm d t  
   \\ 
 &\leq & C \frac {\sqrt{\gamma^N_{k+1}}} {t^N_{k+1} - t^N_{k}} \int _{t^N_{k}}^{t^N_{k+1}} \left \| \int _{0}^{1} h(\bar \theta^N_{t^N_{k}+ \delta (t-    t^N_{k} )})  (t-    t^N_{k} ) \mathrm d \delta \right\|  \mathrm d t 
  \\ 
 &\leq & C^2  {\sqrt{\gamma^N_{k+1}}} (t^N_{k+1} - t^N_{k})
 \\ 
 &= & C^2 ( \gamma^N_{k+1}) ^{3/2} .\end{eqnarray*}
Now $ \gamma^N_{k+1}$ converges to 0 under our assumptions. This implies the statement of the lemma.  \hfill $\square$

 \subsection{Proofs of Propositions \ref{prop:compdiff} and \ref{prop:compdiffcorr}} \label {sec:compdiff}
 \begin{proof} [Proof of Proposition \ref{prop:compdiff}]
 We start by showing that 
  \begin{eqnarray} \label{diftruncbound1} && \left ( \int_{\R^d} \left | q_N- q\right |(s,t,x,z) \mathrm d z \right )^2 \\ \nonumber
  && \qquad \leq C \int_s^t \int_{\R^d} \left | R^{-1/2} (\bar \theta^N _u) \left ( [\bar \alpha I - \mathcal D h (\bar \theta^N _u)]y - F_N(u,y) \chi_N(y) \right ) \right |^2 q(s,u,x,y) \mathrm d y \mathrm d u.
 \end{eqnarray} 
This claim follows from Corollary 1.2 in  \cite{BRS16}. We have to check the assumptions of the corollary. First, for
both diffusions $X_t$ and $X_t^N$, the drift coefficients have Borel measurable locally bounded entries, i.e., for every ball $U \subset \R^d$, there is a number $B=B(U)$ such that \begin{eqnarray*} \left | \left [\bar \alpha I - \mathcal D h(\bar \theta^N_t)\right ] x \right | \leq B(U) \text{ for all } x \in U, t\in [0,T], \\
 \left |F_N(t,x) \chi_N(x)  \right | \leq B(U) \text{ for all } x \in U, t\in [0,T],
 \end{eqnarray*} 
 and $R^{1/2}(\bar \theta^N_t) $ is locally strictly positive, i.e, for every ball $U \subset \R^d$, there exist $0 <   c_R(U) < C_R(U)$ with $c_R(U)I  \leq  R^{1/2}(\bar \theta^N_t)  \leq C_R(U)I $ 
 for all $x \in U, t\in [0,T]$. Furthermore, we have to check that $R^{-1/2} (\bar \theta^N _u) ([\bar \alpha I - \mathcal D h (\bar \theta^N _u) ]y - F_N(u,y) \chi_N ( y) )$ is square integrable with respect to the measure $q(s,u,x,$ $y)$ $\mathrm d y \ \mathrm d u$ on $\R^d \times [s,t] $ and,  choosing Assumption (a) in Theorem 1.1 in \cite{BRS16}, that $(1 + |y|) ^{-2} |R^{i,j} (\bar \theta^N _u) |$, $(1 + |y|) ^{-1} |F_N(u,y) \chi_N ( y) |$  and $(1 + |y|) ^{-1} |[\bar \alpha I + \mathcal D h (\bar \theta^N _u) ]y - F_N(u,y) \chi_N ( y) |$ are integrable, again with respect to the measure $q(s,u,x,y) \mathrm d y \ \mathrm d u$ on $\R^d \times [s,t] $. All these assumptions can be easily verified using in particular the fast decay of Gaussian densities. 
 
 We now argue that for $u \in [t_k^N, t_{k+1}^N)$, $|y| < a_N$
 \begin{eqnarray} \label{diftruncbound2} &&  \left |  [\bar \alpha I - \mathcal D h (\bar \theta^N _u)]y - F_N(u,y) \right |\\
 && \qquad  \leq C \left  ( \left | \bar \alpha - \alpha_{t_k^N}^N\right | + \sqrt {\gamma_k^N / \gamma_{k+1}^N} -1 +  \left | \bar \theta^N_u - \bar \theta^N_{t_k^N}\right | + a_N \sqrt {\gamma_k^N } \right ). \nonumber
 \end{eqnarray} 
 For a proof of this bound note that for $u \in [t_k^N, t_{k+1}^N)$, $|y| < a_N$
 the function $F_N(u,y)$ can be bounded as follows:
 \begin{eqnarray*} &&  \left |\bar \alpha I - \mathcal D h (\bar \theta^N _u)]y - F_N(u,y) \right |\\
 && \qquad  \leq | \bar \alpha - \alpha_{t_k^N}^N) | + \left |
 \left (\sqrt {\gamma_k^N / \gamma_{k+1}^N} -1\right )
  \int_0^1 \mathcal D h( \bar \theta^N_{t_k^N} + \delta \sqrt {\gamma_{k}^N } y) \mathrm d \delta\right |, \end{eqnarray*} 
 which can be used to show \eqref{diftruncbound2}. Finally, we argue that
 \begin{eqnarray} && \sqrt {\gamma_{k}^N / \gamma_{k+1}^N} -1 \leq C  (\gamma_1^N)^{\beta^{-1}}, \label{inrev1}\\
 && | \bar \alpha - \alpha_{t_{k+1}^N}^N| \leq C I_{\{\beta=1\}} \gamma_1^N + C I_{\{\frac 1 2 <\beta<1\}}  (\gamma_1^N)^{\beta^{-1}-1},\label{inrev2} \\
 && \left | \bar \theta^N_u - \bar \theta^N_{t_k^N}\right |  \leq C \gamma_1^N,\label{inrev3}
  \end{eqnarray} 
 which can be easily shown and implies with \eqref{diftruncbound2} that for $|y| < a_N$
  \begin{eqnarray} \label{diftruncbound3} &&  \left |  [\bar \alpha I - \mathcal D h (\bar \theta^N _u)]y - F_N(u,y) \right |\\
 && \qquad  \leq C \left (  I_{\{\frac 1 2 <\beta<1\}} (\gamma_1^N)^{\beta^{-1}-1} + a_N \sqrt {\gamma_1^N} \right ). \nonumber
 \end{eqnarray} 
Thus it holds that
  \begin{eqnarray} \label{diftruncbound1a} &&  \int_s^t \int_{|y| < a_N} \left | R^{-1/2} (\bar \theta^N _u) \left ( [\bar \alpha I - \mathcal D h (\bar \theta^N _u)]y - F_N(u,y) \chi_N(y) \right ) \right |^2 q(s,u,x,y) \mathrm d y \mathrm d u\\
 && \qquad  \leq C (t-s) \left (  I_{\{\frac 1 2 <\beta<1\}} (\gamma_1^N)^{2\beta^{-1}-2} +  a_N^2 \gamma_1^N\right). \nonumber
 \end{eqnarray} 
 For $|y| \geq a_N$ we have that 
 $$ \left | [\bar \alpha I - \mathcal D h (\bar \theta^N _u)]y - F_N(u,y) \chi_N(y) \right | \leq C |y|.$$
 This implies for  $|x| \leq a_N/2$ that
  \begin{eqnarray} \label{diftruncbound1b} &&  \int_s^t \int_{|y| \geq a_N} \left | R^{-1/2} (\bar \theta^N _u) \left ( [\bar \alpha I - \mathcal D h (\bar \theta^N _u)]y - F_N(u,y) \chi_N(y) \right ) \right |^2 q(s,u,x,y) \mathrm d y \mathrm d u\\
 && \qquad  \leq C    \int_s^t \int_{|y| \geq a_N}  | y |^2 q(s,u,x,y) \mathrm d y \mathrm d u \nonumber
 \\
 && \qquad  \leq C    \int_s^t \int_{|y| \geq a_N}  | y |^2 u ^{-d/2} \exp \left ( -C \frac {|x-\theta_{0,u}^N(y)|^2 } u  \right )\mathrm d y \mathrm d u \nonumber
  \\
 && \qquad  \leq C    \int_s^t \int_{|y| \geq a_N}  | y |^2 u ^{-d/2} \exp \left ( -C \frac {|x-y|^2 } u \right )\mathrm d y \mathrm d u \nonumber
   \\
 && \qquad  \leq C   (t-s)  \int_{|y| \geq C a_N}  | y |^2  \exp \left ( -C  |y|^2   \right )\mathrm d y  \nonumber \\
 && \qquad  \leq C   (t-s) \exp(- C a_N^2)   \nonumber \\
 && \qquad  \leq C  (t-s) (\gamma_1^N)^\rho   \nonumber
 \end{eqnarray} 
 for all $\rho > 0$.
 The second inequality of Proposition \ref{prop:compdiff}  now follows from \eqref{diftruncbound1}, \eqref{diftruncbound1a} and \eqref{diftruncbound1b}. The first inequality follows by using the bound \begin{equation} \label{Hellingerbound1} H^2 (P,Q) \leq \int \left | \frac { \mathrm d P} {\mathrm d \nu}-  \frac { \mathrm d Q} {\mathrm d \nu}\right | \mathrm d \nu \end{equation}  for the Hellinger distance between two measures $P$ and $Q$ with dominating measure $\nu$. The last inequality follows by definition of $\gamma_1^N$. 
 \end{proof}
  \begin{proof} [Proof of Proposition \ref{prop:compdiffcorr}]
  The first  inequality follows by application of the bound $$ \int \left | \frac { \mathrm d P} {\mathrm d \nu}-  \frac { \mathrm d Q} {\mathrm d \nu}\right | \mathrm d \nu  \leq 2 H (P,Q) $$ for  two measures $P$ and $Q$ with dominating measure $\nu$.  The  third  inequality follows as in the proof of Proposition \ref{prop:compdiff} by definition of $\gamma_1^N$. 
  
  For the proof of the second inequality we apply Proposition 2.1 in 
 \cite{CLEM22}. This proposition is stated for one-dimensional homogeneous Markov chains but it can be easily generalized to multi-dimensional non-homogeneous Markov chains. Stated for the measures  $Q_{x}^{m_N}$ and $Q_{N,x}^{m_N}$ the proposition says that 
 \begin{eqnarray*} 
 H^2(Q_{x}^{m_N},Q_{N,x}^{m_N}) \leq  \frac {1} 2 \sum_{j=1} ^{m_N} \left (\mathbb E[H^2_{X_{t_{j-1}},j}]+ \mathbb E[H^2_{X_{t_{j-1}}^N,j}]\right ),
 \end{eqnarray*}
 where for $x\in \R ^d$
  \begin{eqnarray*} 
H^2_{x,j}  = \int \left ( \sqrt{q(t_{j-1}, t_j, x, z)}- \sqrt{q_N(t_{j-1}, t_j, x, z)}\right ) ^2 \mathrm d z.\end{eqnarray*}
By application of \eqref{Hellingerbound1} we get that 
 \begin{eqnarray*} 
 H^2(Q_{x}^{m_N},Q_{N,x}^{m_N}) &\leq & \frac {1} 2 \sum_{j=1} ^{m_N} \left (\mathbb E\left [ \int \left | {q(t_{j-1}, t_j, X_{t_{j-1}}, z)}- {q_N(t_{j-1}, t_j, X_{t_{j-1}}, z)}\right | \mathrm d z\right ]\right . \\
 && \qquad 
 + \left .\mathbb E\left [ \int \left | {q(t_{j-1}, t_j, X_{t_{j-1}}^N, z)}- {q_N(t_{j-1}, t_j, X_{t_{j-1}}^N, z)}\right | \mathrm d z\right ]\right ). \end{eqnarray*}
 By application of Proposition \ref{prop:compdiff} we get that 
  \begin{eqnarray*} 
&& \mathbb E\left [ \int \left | {q(t_{j-1}, t_j, X_{t_{j-1}}, z)}- {q_N(t_{j-1}, t_j, X_{t_{j-1}}, z)}\right | \mathrm d z\right ]\\
 && \qquad \leq C  m_N^{-1/2} \left( I_{\{\frac 1 2 <\beta<1\}} (\gamma_1^N)^{\beta^{-1}-1} +  \ln(1/\gamma_1^N) (\gamma_1^N)^{1/2}\right )
 + \mathbb P [|X_{t_{j-1}}| > a_N/2].\end{eqnarray*}
 The same inequality applies with $X_{t_{j-1}}$ replaced by $X_{t_{j-1}}^N$. Thus for the proof of the proposition it suffices to show that:
 \begin{eqnarray} \label{eq:mnbound1}
&& \mathbb P [|X_{t_{j-1}}| > a_N/2]+ \mathbb P [|X_{t_{j-1}}^N| > a_N/2]  \leq C  m_N^{-1/2}   \ln(1/\gamma_1^N) (\gamma_1^N)^{1/2}.\end{eqnarray}
Now, by application of Gaussian bounds for the transition densities of $X_t$ and $X_t^N$, stated in \cite{dela:meno:10}, we get that 
 \begin{eqnarray*} 
&& \mathbb P [|X_{t_{j-1}}| > a_N/2]+ \mathbb P [|X_{t_{j-1}}^N| > a_N/2]  \leq C  \exp (- C a_N^2).\end{eqnarray*}
Furthermore, note that we can assume that $m_N \leq C N^\mu$ for some $\mu >0$. Otherwise the second inequality of the proposition holds trivially because the Hellinger distance is bounded and the right hand side of the inequality would converge to infinity. But for the case that $m_N \leq C N^\mu$ for some $\mu >0$ one can easily verify that \eqref{eq:mnbound1} holds. This completes the proof of the proposition.
  \end{proof}

\subsection{Proofs of the bounds stated in the lemmas of Subsection \ref{sec:somebounds}} \label {sec:proofsbounds}

\begin{proof}[Proof of Lemma \ref{lem:boundsec31 1} ] {
To see this we argue that
\begin{eqnarray*}&&\|F_N(t_k^N,x) \chi_{N}(x)-F_N(t_k^N,y) \chi_{N}(y) \| \\ \nonumber  && \qquad\leq C \int _0^1 \|\mathcal Dh(\bar \theta^N_{t_k} + \delta \sqrt{\gamma_k^N} \chi_{N}(x)) \chi_{N}(x) - \mathcal Dh(\bar \theta^N_{t_k} + \delta \sqrt{\gamma_k^N} \chi_{N}(y)) \chi_{N}(y)\| \mathrm d \delta .\end{eqnarray*} 
One gets the Lipschitz bound \eqref{eqapp5} by using that the elements of $\mathcal D h$ and its  partial derivatives  are uniformly bounded in a tubular neighbourhood of $\theta_t$ and that furthermore all partial derivatives of $\chi_{N}(x) = (\chi_{N1}(x),...,\chi_{Nd}(x))$ are bounded because of
\begin{eqnarray*} && \frac {\partial} {\partial x_i} \chi_{Nj}(x) = \frac {\partial} {\partial x_i} \left [ x_j {\text{\textbf{I}}}_{\|x\| \leq a_N}(x) + a_N \frac {x_j}{\|x\|} \textbf I_{\|x\| > a_N}(x) \phi_{N}(\|x\|)\right ]\\
&& \qquad = \left\{\begin{array}{cc}\\ 0 & \mbox{if } \|x\| \geq 2a_N, 
\\ \delta_{i,j} & \mbox{if } \|x\| \leq a_N,
\\a_N \left [\left ( \frac  {\delta_{i,j}} {\|x\|} -\frac  {x_i x_j} {\|x\|^3} \right )  \phi_{N}( \|x\|) -
\frac  {x_i x_j} {\|x\|^2} k_N \exp \left ( - \frac 1 {(2 a_N- \|x\| ) (\|x\| -a_N)} \right ) \right ]& \mbox{if } a_N < \|x\| < 2 a_N.\end{array}\right.
\end{eqnarray*}
It remains to show that  \begin{eqnarray} a_N k_N \exp \left ( - \frac 1 {(2 a_N- \|x\| ) (\|x\| -a_N)} \right )  \label{boundrev} \end{eqnarray} is bounded for $a_N < \|x\| < 2 a_N$.

Note that for \(0 < \delta < 1\)
\[k_{N} = \frac{1}{\int_{0}^{a_{N}}{e^{- \frac{1}{v(a_{N} - v)}}\ \mathrm dv}}\  \leq \frac{1}{\int_{0}^{{\delta a}_{N}}{e^{- \frac{1}{v(a_{N} - v)}}\ \mathrm dv}} \leq \frac{1}{\int_{0}^{{\delta a}_{N}}{e^{- \frac{1}{a_{N}v(1 - \delta)}}\ \mathrm dv}}.\]

We now apply  formula 3.471 (2)  in \cite{GR15}:
\begin{eqnarray*} \int_0^u x^{\nu -1} (u-x) ^{\mu-1} \exp(- \beta /x) \mathrm d x = \beta^{(\nu-1)/2} u ^{(2 \mu + \nu -1)/2} \exp(-\beta/(2u)) \Gamma(\mu) W_{(1- 2 \mu-\nu)/2, \nu/2}(\beta /u) \end{eqnarray*}
for $\mu, \beta, u$ with Re$\mu  >0$, Re$\beta  >0$, $u>0$. Here $W_{\lambda,\mu}$ is a Whittaker function with parameter $\lambda,\mu$.

With the choice
\(u = {\delta a}_{N},\ \nu = \mu = 1,\ \beta = \frac{1}{(1 - \delta)a_{N}}\),
we get that
\[k_{N} \leq \frac{e^{\frac{1}{2\delta(1 - \delta)a_{N}^{2}}}}{{\delta a}_{N}W_{- 1,\frac{1}{2}}\ (\frac{1}{\delta(1 - \delta)a_{N}^{2}})}\ \ .\]

We will use the
following properties of Whittaker functions, see formulas 9.232 (1) and 9.222 (4) in \cite{GR15}:
\begin{eqnarray*} W_{\lambda,\mu}(z) &=& W_{\lambda,-\mu}(z), \\ W_{\lambda,\mu+ \frac 1 2 }(z) &=& z^{\mu+1} \exp(z/2) \int_0^\infty (1+t) ^{2\mu} \exp(-zt) \mathrm d t.
\end{eqnarray*}
This gives that
\[k_{N} \leq \frac{e^{\frac{1}{2\delta(1 - \delta)a_{N}^{2}}}}{{\delta a}_{N}W_{- 1,\frac{1}{2} + ( - 1)}\ (\frac{1}{\delta(1 - \delta)a_{N}^{2}})}\  = \frac{e^{\frac{1}{\delta(1 - \delta)a_{N}^{2}}}}{{\delta a}_{N}\int_{0}^{\infty}{\frac{1}{{(1 + t)}^{2}}\ e^{- \frac{t}{\delta(1 - \delta)a_{N}^{2}}}\ dt}}.\]
It follows from this upper bound that
\[{a_{N}k}_{N} \leq \frac{e^{\frac{1}{\delta(1 - \delta)a_{N}^{2}}}}{\delta\int_{0}^{\infty}{\frac{1}{{(1 + t)}^{2}}\ e^{- \frac{t}{\delta(1 - \delta)a_{N}^{2}}}\ dt}}\  \leq C.\]
This inequality can be used to show that \eqref{boundrev} is bounded.} \end{proof}

\begin{proof}[Proof of Lemma \ref{lem:boundsec31 2} ] For a proof of 
\eqref{eq:lem3cent2} with $ |\nu|=0$ note first that $\tilde q_n$ is a Gaussian density with 
$$\tilde q_n(t_i^N,t_j^N,x,y) = \frac {(2 \pi)^{-d/2} }{\sqrt {\det {\bar \sigma(t_i^N,t_j^N)}}} \exp \left ( - \frac 1 2 \left \langle  \bar \sigma^{-1} (t_i^N,t_j^N) (\theta^N_{t_i^N,t_j^N}(y) -x),\theta^N_{t_i^N,t_j^N}(y) -x \right \rangle \right ),
$$
where $\bar \sigma = \int_s ^t R(\bar \theta^N_u) \mathrm d u$.

Furthermore, $\tilde p_n(t_i^N,t_j^N,x,y)$ is the density of $S_{i,j} = \sum_{k=i}^{j-1} \zeta_k$ at the point $\theta_{t_i^N,t_j^N} ^N(y)-x$.
Here $S_{i,j}$ is the sum of the independent random variables 
$$\zeta_k = \sqrt{\gamma_{k+1} ^N} \xi \left ( \bar \theta^N_{t_k^N}+ \chi_N\left ( \theta_{t_i^N,t_j^N} ^N(y) \right )  \sqrt{\gamma_{k} ^N}, \eta_{k+1}\right),$$
where $\xi(\theta, \eta) = H(\theta, \eta) - \mathbb E_ \eta [ H(\theta, \eta)]$. It holds $\mathbb E [\zeta_k] =0$ and Cov$(\zeta_k) = R\left ( \bar \theta^N_{t_k^N}+ \chi_N\left ( \theta_{t_i^N,t_j^N} ^N(y) \right )\right .$ $ \left .\sqrt{\gamma_{k} ^N}\right)$. The covariance matrix $W_{i,j}$ of $S_{i,j}$ is equal to
$$ W_{i,j} =\sum_{k=i}^{j-1} \gamma_{k+1} ^N R\left ( \bar \theta^N_{t_k^N}+ \chi_N\left ( \theta_{t_i^N,t_j^N} ^N(y) \right )  \sqrt{\gamma_{k} ^N}\right).$$
It can be easily checked that $ W_{i,j}-\bar \sigma(t_i^N,t_j^N)$ converges to 0 for $N \to \infty$. For the difference we get the following bound.
\begin{eqnarray*}
&&\left | \bar \sigma(t_i^N,t_j^N) - W_{i,j}\right | \leq \sum_{k=i}^{j-1}\gamma_{k+1} ^N \left | R\left ( \bar \theta^N_{t_k^N}+ \chi_N\left ( \theta_{t_i^N,t_j^N} ^N(y) \right )  \sqrt{\gamma_{k} ^N}\right) - R\left ( \bar \theta^N_{t_k^N}\right)
\right |\\
&& \qquad \qquad + \sum_{k=i} ^{j-1} \int_{t_k^N} ^{t_{k+1}^N} \left | R\left ( \bar \theta^N_{t_k^N}\right)- R\left ( \bar \theta^N_{u}\right)\right | \mathrm d u|\\
&& \qquad \leq C (t_j^N - t_i^N) (a_N \sqrt{\gamma_1 ^N} + \gamma_1 ^N) \\
&& \qquad \leq C (t_j^N - t_i^N) a_N \sqrt{\gamma_1 ^N} .
\end{eqnarray*}
With $f_{i,j}$ equal to the density of the normalized sum $W_{i,j} ^{-1/2} \sum_{k=i} ^{j-1} \zeta_k$ we can write
\begin{eqnarray*}
\tilde p_n(t_i^N,t_j^N,x,y) &=& \det{W_{i,j} ^{-1/2} } f_{i,j} (W_{i,j} ^{-1/2} [ \theta_{t_i^N,t_j^N} ^N(y) -x]),\\
D_x \tilde p_n(t_i^N,t_j^N,x,y) &=&-  \det{W_{i,j} ^{-1/2} } D_xf_{i,j} (W_{i,j} ^{-1/2} [ \theta_{t_i^N,t_j^N} ^N(y) -x]).
\end{eqnarray*}

Under our assumptions $R( \theta)$ is uniformly elliptic in a tubular neighborhood of $\bar \theta^N_t$. Because $a_N \sqrt{\gamma_1^N} \to 0$ we have for $N$ large enough for unit vectors $\theta$ with $\| \theta \| =1$
\begin{eqnarray*}
&&C^{-1} (t_j^N - t_i^N) ^{-1/2} \leq \theta ^T W_{i,j} ^{-1/2} \theta \leq C (t_j^N - t_i^N) ^{-1/2}, \\
&&C^{-1} (t_j^N - t_i^N) ^{-d/2} \leq \det{W_{i,j} ^{-1/2} } \leq C (t_j^N - t_i^N) ^{-d/2}.
\end{eqnarray*}
This implies that for $f_{i,j}$ a local classical limit proposition applies with the following Gaussian density as leading term
$$\frac 1 {\sqrt {\det{W_{i,j}}}} \phi_{0,I} \left (W_{i,j} ^{-1/2}[ \theta_{t_i^N,t_j^N} ^N(y) -x]\right ),$$
see Theorem 19.3 in \cite{bhat:rao:86}.
For the number of summands of $S_{i,j}$ we have the following bound:
$$\frac {t_j^N - t_i^N} {\gamma_1 ^N} \leq j-i \leq \frac {t_j^N - t_i^N} {\gamma_j ^N}.$$
By application of Theorem 19.3 in \cite{bhat:rao:86} we get that
\begin{eqnarray*}
&&\left | \tilde p_n(t_i^N,t_j^N,x,y) - \frac 1 {\sqrt {\det{W_{i,j}}}} \phi_{0,I} \left (W_{i,j} ^{-1/2}[ \theta_{t_i^N,t_j^N} ^N(y) -x]\right ) \right | \\
&& \qquad \leq C \frac  {\sqrt{\gamma_1 ^N} } {\sqrt {t_j^N - t_i^N} } \mathcal Q_{M-d-1} (t_j^N -t_i^N , \theta_{t_i^N,t_j^N} ^N(y) -x).\end{eqnarray*}
Claim
\eqref{eq:lem3cent2}  with $ |\nu|=0$ now follows by noting that  
\begin{eqnarray*}
&&\left | \frac 1 {\sqrt {\det{W_{i,j}}}} \phi_{0,I} \left (W_{i,j} ^{-1/2}[ \theta_{t_i^N,t_j^N} ^N(y) -x]\right )- \tilde q_n(t_i^N,t_j^N,x,y)\right | \\
&& \qquad = \left | \frac 1 {\sqrt {\det{W_{i,j}}}} \phi_{0,I} \left (W_{i,j} ^{-1/2}[ \theta_{t_i^N,t_j^N} ^N(y) -x]\right )\right . 
 \\
&& \qquad \qquad \left . -\frac 1 {\sqrt {\det{\bar \sigma(t_i^N,t_j^N)}}} \phi_{0,I} \left (\bar \sigma(t_i^N,t_j^N) ^{-1/2}[ \theta_{t_i^N,t_j^N} ^N(y) -x]\right )\right |
\\ &&
\qquad \leq C\sqrt{\gamma_1 ^N} (t_j^N - t_i^N) a_N \mathcal Q_{M-d-1} (t_j^N -t_i^N , \theta_{t_i^N,t_j^N} ^N(y) -x).
\end{eqnarray*}

By a slight extension of arguments one can also show \eqref{eq:lem3cent2}  with $1 \leq  |\nu| \leq 4$ . This can be done by copying the proof of Theorem 19.3 in \cite{bhat:rao:86} and noting that the derivation $\frac {\partial} {\partial x_k}$ of the density corresponds to the multiplication of its characteristic function by $-it_k$. The proof of  \eqref{eq:lem3cent2add} for $\varphi = \tilde p_N$   follows directly by using that $\tilde p_N$ is a Gaussian density. By using \eqref{eq:lem3cent2}  one gets that \eqref{eq:lem3cent2add} with $\varphi = \tilde p_N$ implies 
 \eqref{eq:lem3cent2add}
 for $\varphi = \tilde q_N$.
 \end{proof}
 
 \begin{proof}[Proof of Lemma \ref{em2:boundstheta}] The proof of \eqref{eq:lem3cent5a} follows by \eqref{eqapp5} and  by application of Gronwall's inequaltiy, compare also Lemma 5.3 in \cite{dela:meno:10}. Note that by application of  \eqref{eq:defthetaN} and  \eqref{eqapp5}  we get for $g(u) = 
 \|\theta^N_{u,s}(x) -  \theta^N_{u,t}(y)\| $ that 
 \begin{eqnarray*} 
g(u)&=& \|\int_s^u \left ( \frac {\partial} {\partial v }\theta^N_{v,t}(y) - \frac {\partial} {\partial v } \theta^N_{v,s}(x) \right ) \mathrm d v +  \theta^N_{s,t}(y)- \theta^N_{s,s}(x)\|\\
&\leq& \|\theta^N_{s,t}(y)- \theta^N_{s,s}(x)\|+
 C\int_s^u \|\theta^N_{v,t}(y) -  \theta^N_{v,s}(x)\| \mathrm d v\\
 &=& \|\theta^N_{s,t}(y)-x\|+
 C\int_s^ug(v) \mathrm d v,
 \end{eqnarray*} 
 which gives for some $C>0$ that
 $$g(u) \leq  \|\theta^N_{s,t}(y)-x\| e^{CT}.$$
 Application of the last inequality with $u=t$ and repetition of the argument with the roles of $s,x$ and $t,y$ interchanged gives \eqref{eq:lem3cent5a}. 
 By putting $x=0$ in \eqref{eq:lem3cent5a}  we get \eqref{eq:lem3cent5b}. Claim \eqref{eq:boundstheta1}  follows by similar arguments as in the proof of  \eqref{eq:lem3cent5a}. \end{proof}
\begin{proof}[Proof of Lemma \ref{lem4:boundsH}  ]
With $\lambda$ large enough we define the diffusions 
\begin{eqnarray}  \label{eq:majdiff1}
\mathrm d \bar X_t &=& \left [ \bar \alpha I - \mathcal Dh(\bar \theta^N_t) \right ] \bar X_t \mathrm d t + \lambda I \mathrm d B_t \text{ with } \bar X_0 = x,\\
\label{eq:majdiff2}
\mathrm d \bar X^N_t &=&  F_N(t, \bar X^N_t)\chi_{N} (\bar X^N_t) \mathrm d t + \lambda I \mathrm d B_t \text{ with } \bar X^N_0 = x
\end{eqnarray}
and denote their transition densities by $\bar p(t,s,x,y)$ and $\bar q_N(t,s,x,y)$, respectively. We will show that
\begin{eqnarray} \label{eq:boundsH1A} &&\left | H^{(r)} (t,v,x,y)\right| \leq C^r \frac {\Gamma^r(1/2)}{\Gamma(r/2)} (v-t) ^{\frac r 2 -1} \bar p(t,v,x,y), \\
\label{eq:boundsH2A} &&\left | H_N^{(r)} (t,v,x,y)\right| \leq C^r \frac {\Gamma^r(1/2)}{\Gamma(r/2)} (v-t) ^{\frac r 2 -1}\bar q_N (t,v,x,y).
\end{eqnarray}
These claims imply the lemma because, by Theorem 1.1 in \cite{dela:meno:10} we get with a constant $C>0$:
\begin{eqnarray}  \label{eq:boundmajdiff1}
\bar p(t,s,x,y) &\leq &C (s-t) ^{-d/2} \exp \left ( - \frac {(x - \theta_{t,s}(y))^2}{C|s-t|}\right ),\\
 \label{eq:boundmajdiff2}
\bar q_N(t,s,x,y) &\leq &C (s-t) ^{-d/2} \exp \left ( - \frac {(x - \theta_{t,s}^N(y))^2}{C|s-t|}\right ).\end{eqnarray}
For the proof of \eqref{eq:boundsH1A}--\eqref{eq:boundsH2A} we now apply Proposition 2.3 and Corollary 2.4 in \cite{bitt:kona:21}. From Corollary 2.4 we get that, for $\lambda > 0$ large enough, the transition densities  of $X_t$ and $X_t^N$ are bounded from above up to a constant factor by the transition densities  of $\bar X_t$ or $\bar X_t^N$, respectively. Hence convolutions of the densities of $X_t$ and $X_t^N$ are bounded  by the convolutions of the densities of the majorizing diffusions $\bar X_t$ and $\bar X_t^N$ multiplied by a constant to the power equal to the multiplicity of convolution. Calculation of the $\otimes$ and $\otimes_N$ convolutions gives the additional factor $1/{\Gamma(r/2)}$, compare e.g. for the operator $\otimes_N$
the calculation of the bound for the term $\big | II \big |$ after
\eqref{eq:lem3cent7}. 
 
 \end{proof}

\newpage


\section{Supplement of \\
"Local Limit Theorems and 
Strong Approximations for Robbins-Monro Procedures" \\ by Valentin Konakov, Enno Mammen and  Lorick Huang
} \label {sec:proofscompRobMon}

This supplement contains the proofs of Lemmas \ref{lem1:comptrunc}, \ref{lem2:comptrunc}, and \ref{lem4:comptrunc} from Subsection \ref{subsec:compRobMon}.

\begin{proof}[Proof of Lemma \ref{lem1:comptrunc}]
With $q_N^{discr}(t_{i}^ N, t_{j}^ N,x,y) = \sum_{r=0}^\infty \tilde q_N\otimes_N H_N^{(r)}(t_{i}^ N, t_{j}^ N,x,y)$ we have to show that 
 \begin{eqnarray} \label{eq:claim1}\left |(q_N- q_N^{discr})(t_{i}^ N, t_{j}^ N,x,y) \right | \leq C \ln^2(1 / \gamma_1^N) \sqrt {\gamma_1^N} \sqrt { t_{j}^ N- t_{i}^ N} \bar q_N(t_{i}^ N, t_{j}^ N,x,y). \end{eqnarray} 
 For a proof of this claim we write 
  \begin{eqnarray*}  &&(\tilde q_N \otimes H_N^{(r)}-\tilde q_N \otimes_N H_N^{(r)})(t_{i}^ N, t_{j}^ N,x,y)\\
&& \qquad =
\left [\left(\tilde q_N \otimes H_N^{(r-1)}\right) \otimes H_N -\left(\tilde q_N \otimes H_N^{(r-1)}\right) \otimes_N H_N \right](t_{i}^ N, t_{j}^ N,x,y)\\
  && \qquad \qquad + \left [\left(\tilde q_N \otimes H_N^{(r-1)}\right)  -\left(\tilde q_N \otimes_N H_N^{(r-1)}\right)  \right]\otimes_N H_N(t_{i}^ N, t_{j}^ N,x,y).   \end{eqnarray*} 
  By summing this up from $r=1$ to $r=\infty$ we get by using linearity of the operations $ \otimes $ and $ \otimes _N$
   \begin{eqnarray*}  &&\left ( q_N- q_N^{discr}\right )(t_{i}^ N, t_{j}^ N,x,y)  = \left ( q_N\otimes H_N- q_N\otimes_N H_N\right )(t_{i}^ N, t_{j}^ N,x,y) \\
&& \qquad +
\left ( q_N- q_N^{discr}\right )\otimes _N H_N(t_{i}^ N, t_{j}^ N,x,y) .   \end{eqnarray*} 
Iterative application of this equation gives
\begin{eqnarray} \nonumber &&\left ( q_N- q_N^{discr}\right )(t_{i}^ N, t_{j}^ N,x,y)  = \left ( q_N\otimes H_N- q_N\otimes_N H_N\right )(t_{i}^ N, t_{j}^ N,x,y) \\
&& \label{eq:zeroout}\qquad +
\left ( q_N\otimes H_N- q_N\otimes_N H_N\right )\otimes _N \Phi_N^{discr}(t_{i}^ N, t_{j}^ N,x,y) ,   \end{eqnarray} 
where $\Phi_N^{discr}(t_{i}^ N, t_{j}^ N,x,y)  = \sum_{k\geq 1} H_N^{(k),\otimes_N}(t_{i}^ N, t_{j}^ N,x,y)$. Here $H_N^{(k),\otimes_N}$ is similarly defined as $H_N^{(k)}$ as a $k$-times convolution of $H_N$ but now with using the convolution operator $\otimes_N$ instead of $\otimes$. Furthermore, instead of $H_N$ we use the convolution of $H_N(t_{i}^ N, t_{j}^ N,x,y) \mathbb I_{t_{j}^ N > t_{i}^ N}$. For the latter change note that $\Phi_N^{discr}(t_{i}^ N, t_{j}^ N,x,y)$ with $t_{i}^ N=t_{j}^ N$ is not used in \eqref{eq:zeroout} because $\Phi_N^{discr}(t_{i}^ N, t_{j}^ N,x,y)$ only appears on the right hand side of  the convolution operator $\otimes_N$. Note  that for this reason it holds  for functions $f$ that $f\otimes _N \Phi_N^{discr}(t_{i}^ N, t_{j}^ N,x,y) = ((f \otimes_N H_N) + (f\otimes_N H_N)\otimes_N H_N + ((f\otimes_N H_N) \otimes_N H_N)\otimes_N H_N+ ...)(t_{i}^ N, t_{j}^ N,x,y) $.

For the statement of the lemma it suffices to show that for some constant $C>0$
\begin{eqnarray}  \label{eq:claim1a} &&\left | \left ( q_N\otimes H_N- q_N\otimes_N H_N\right )(t_{i}^ N, t_{j}^ N,x,y) \right|\\
\nonumber && \qquad \leq C \ln^2(1 / \gamma_1^N) \sqrt {\gamma_1^N} \sqrt { t_{j}^ N- t_{i}^ N} (1 + |y|)\bar q_N(t_{i}^ N, t_{j}^ N,x,y), \\
 \label{eq:claim1b} && \left |\left ( q_N\otimes H_N- q_N\otimes_N H_N\right )\otimes _N \Phi_N^{discr}(t_{i}^ N, t_{j}^ N,x,y)\right |  \\
\nonumber && \qquad \leq 
C \ln^2(1 / \gamma_1^N) \sqrt {\gamma_1^N} (t_{j}^ N- t_{i}^ N) (1 + |y|)\bar q_N(t_{i}^ N, t_{j}^ N,x,y) .  
\end{eqnarray} 
Before we come to the proof of these claims we first argue that the following bound for $\Phi_N^{discr}$ applies
\begin{eqnarray}  \label{eq:claim1c}| \Phi_N^{discr}(t_{i}^ N, t_{j}^ N,x,y)|&\leq& C (t_{j}^ N - t_{i}^ N)^{-1/2} \bar q _N(t_{i}^ N, t_{j}^ N,x,y).
\end{eqnarray} 
This bound follows from
\begin{eqnarray}  \label{eq:claim1chelp}| H_N^{(r),\otimes_N}(t_{i}^ N, t_{j}^ N,x,y)|&\leq& C^r \frac {\Gamma^r (1/2)} {\Gamma (r/2)} (t_{j}^ N - t_{i}^ N)^{\frac r 2-1} \bar q _N(t_{i}^ N, t_{j}^ N,x,y).
\end{eqnarray} 

For a proof of \eqref{eq:claim1chelp} we remark that for the function $f(u) = (u-t_i^N)^{\frac r 2 - 1} (t_j^N - u) ^{-1/2}$ for $r \geq 2$ the following estimate holds because $f$ is monotonically increasing on $[t_{i}^ N, t_{j}^ N]$
\begin{eqnarray*}  \sum_{k=i}^{j-1} \gamma_{k+1} ^N f(t_k^N) &\leq& \int _{t_i^N}^{t_j^N} f(u) \mathrm d u.
\end{eqnarray*} 
Using this bound we can carry over the arguments of the proof of \eqref{eq:boundsH2} to get \eqref{eq:claim1chelp}. 

We now show that \eqref{eq:claim1a} and \eqref{eq:claim1c} imply \eqref{eq:claim1b}. By application of the first two inequalities we get
\begin{eqnarray*} && \left |\left ( q_N\otimes H_N- q_N\otimes_N H_N\right )\otimes _N \Phi_N^{discr}(t_{i}^ N, t_{j}^ N,x,y)\right | \\
&& \qquad \leq \sum_{k=i}^{j-1} \gamma_{k+1}^N \int_{\R^d}  \left |\left ( q_N\otimes H_N- q_N\otimes_N H_N\right )(t_{i}^ N, t_{k}^ N,x,z)\right |  \ \left |
\Phi_N^{discr}(t_{k}^ N, t_{j}^ N,z,y)\right | \mathrm d z
\\
&& \qquad \leq C \ln^2(1/ \gamma_{1}^N) \sqrt{\gamma_{1}^N} \sum_{k=i}^{j-1} \gamma_{k+1}^N \frac {\sqrt{t_k^N - t_i^N}} {\sqrt{t_j^N - t_k^N}} \int_{\R^d} (1 + |z|) \bar q_N(t_{i}^ N, t_{k}^ N,x,z)
\bar q_N(t_{k}^ N, t_{j}^ N,z,y) \mathrm d z\\
&& \qquad \leq C \ln^2(1/ \gamma_{1}^N) \sqrt{\gamma_{1}^N} \sum_{k=i}^{j-1} \gamma_{k+1}^N \frac {\sqrt{t_k^N - t_i^N}} {\sqrt{t_j^N - t_k^N}} \int_{\R^d} \left (1 + |y|  \right ) \bar q_N(t_{i}^ N, t_{k}^ N,x,z)
\bar q_N(t_{k}^ N, t_{j}^ N,z,y) \mathrm d z  \\ 
&& \qquad \leq C \ln^2(1/ \gamma_{1}^N) \sqrt{\gamma_{1}^N} ( t_j^N - t_i^N)  \left (1 + |y|  \right ) \bar q_N(t_{i}^ N, t_{j}^ N,x,y) \int_0^1 \frac { \sqrt {u}}{ \sqrt {1-u}}\mathrm d u
\\
&& \qquad \leq C \ln^2(1/ \gamma_{1}^N) \sqrt{\gamma_{1}^N} ( t_j^N - t_i^N)  \left (1 + |y|  \right ) \bar q_N(t_{i}^ N, t_{j}^ N,x,y).
\end{eqnarray*} 
 This shows \eqref{eq:claim1b}. For the proof of the lemma it remains to show \eqref{eq:claim1a}. For a proof of this claim we introduce
 $$\lambda_u(z) = q_N(t_i^N,u,x,z) H_N(u,t_j^N,z,y).$$
Using a Taylor expansion of second order we get
\begin{eqnarray}&& \nonumber (q_N\otimes H_N - q_N \otimes_N H_N)(t_i^N,t_j^N,x,y) = \sum_{k=i}^{j-1} \int_{t_k^N}^{t_{k+1}^N} \mathrm d u \int_{\R^d} \left(\lambda_u(z)- \lambda_{t_k^N}(z)\right) \mathrm d z\\ \label{eqboundHinLemma42}
&& \qquad = \sum_{k=i}^{j-1} \int_{t_k^N}^{t_{k+1}^N} (u-t_k^N) \int_0^1\int_{\R^d} \frac {\partial}{\partial s}\lambda_s(z)\bigg |_{s=s_k} \mathrm d z\ \mathrm d \delta \ \mathrm d u 
\end{eqnarray}
with $s_k = t_k^N + \delta (u-t_k^N)$. Next by application of forward and backward Kolmogorov equations we get that
\begin{eqnarray*}&&\int_{\R^d} \frac {\partial}{\partial s}\lambda_s(z)\bigg |_{s=s_k} \mathrm d z = \int_{\R^d} \frac {\partial}{ \partial s} \left( q_N(t_i^N,s,x,z) H_N(s,t_j^N,z,y)\right)  \bigg |_{s=s_k} \mathrm d z
\\
&& \qquad = \int_{\R^d} \left ((L_s^N)^* q_N(t_i^N,s,x,z) (L_s^N- \tilde L_s^N) \tilde q_N(s,t_j^N,z,y) \right )  \bigg |_{s=s_k} \mathrm d z
\\
&& \qquad \qquad - \int_{\R^d} \left (q_N(t_i^N,s,x,z) (L_s^N- \tilde L_s^N)  \tilde L_s^N \tilde q_N(s,t_j^N,z,y) \right )  \bigg |_{s=s_k} \mathrm d z
\\
&& \qquad \qquad + \int_{\R^d} q_N(t_i^N,s,x,z) \sum_{h=1} ^d \bigg [ \frac {\partial}{\partial s}
\sum_{l=1} ^d  \bigg([F_N(s,z)]_{h,l} [\chi_N(z)]_l\\
&&\qquad \qquad \qquad - [F_N(s,\theta^N_{s,t_j^N}(y))]_{h,l}[ \chi_N(\theta^N_{s,t_j^N}(y)]_l
\bigg )  \bigg |_{s=s_k} \frac {\partial \tilde q _N(s_k,t_j^N,z,y)} {\partial z_h} \bigg ] \mathrm d z \\
&& \qquad = I + II
\end{eqnarray*}
with 
\begin{eqnarray*}
I&=& \int_{\R^d}  q_N(t_i^N,s,x,z) \big((L_s^N)^2-2 L_s^N \tilde L_s^N+ ( \tilde L_s^N)^2 \big )  \tilde q_N(s,t_j^N,z,y) \bigg |_{s=s_k} \mathrm d z, \\
II&=& \int_{\R^d} q_N(t_i^N,s_k,x,z) \sum_{r=1} ^d \bigg [ \frac {\partial}{\partial s}
\sum_{l=1} ^d  \bigg([F_N(s,z)]_{r,l} [\chi_N(z)]_l\\
&&\qquad \qquad \qquad - [F_N(s,\theta^N_{s,t_j^N}(y))]_{r,l}[ \chi_N(\theta^N_{s,t_j^N}(y)]_l
\bigg )  \bigg |_{s=s_k} \frac {\partial \tilde q _N(s_k,t_j^N,z,y)} {\partial z_r} \bigg ] \mathrm d z.
\end{eqnarray*}
We will show below that 
\begin{eqnarray}
\label{eq:claimboindI}
|I|&\leq &C \frac {\ln^2(1 /\gamma_1^k )}{\sqrt {s_k -t_i^N} \sqrt {t_j^N -s_k}}  \bar q_N(t_i^N,t_j^N,x,y), \\
\label{eq:claimboindII}
|II|&\leq &C \frac {\ln(1 /\gamma_1^k )}{\sqrt {t_j^N -s_k}}  \bar q_N(t_i^N,t_j^N,x,y).
\end{eqnarray}
Before we will come to the proof of \eqref{eq:claimboindI} and \eqref{eq:claimboindII} we now show that these inequalities can be used to prove the lemma. Note first that from \eqref{eq:claimboindI}, \eqref{eq:claimboindII} we get that 
$$\left | \int_{\R^d} \frac {\partial}{\partial s}\lambda_s(z)\bigg |_{s=s_k} \mathrm d z\right | \leq C \frac {\ln^2(1 /\gamma_1^k )}{\sqrt {s_k -t_i^N} \sqrt {t_j^N -s_k}}  \bar q_N(t_i^N,t_j^N,x,y).$$
We now use this bound to estimate the summand on the right hand side of 
\eqref{eqboundHinLemma42}. For the statement of the lemma we have to show that for some constant $C> 0$
\begin{eqnarray}
\label{eq:ineqlem42}
 \sum_{k=i}^{j-1} \int_{t_k^N}^{t_{k+1}^N}  \int_0^1  \frac {(u-t_k^N)}{\sqrt {s_k -t_i^N} \sqrt {t_j^N -s_k}}  \mathrm d \delta \ \mathrm d u \leq C \sqrt {\gamma_1^N} \sqrt{t_j^N -t_i^N}.
\end{eqnarray}
Note that the left hand side of \eqref{eq:ineqlem42} is equal to
$$ \sum_{k=i}^{j-1} \int_0^1    \frac { (t_{k+1}^N-t_{k}^N)^2}{\sqrt {t_{k}^N -t_i^N+v(t_{k+1}^N-t_{k}^N)} \sqrt {t_j^N -t_{k}^N-v(t_{k+1}^N-t_{k}^N)}}  \mathrm d v.$$
We now consider the summands of this sum for $i+1 \leq k \leq j-2$, $k=i$ and $k=j-1$.
We have 
\begin{eqnarray*}
&&\sum_{k=i+1}^{j-2} \int_0^1    \frac { (t_{k+1}^N-t_{k}^N)^2}{\sqrt {t_{k}^N -t_i^N+v(t_{k+1}^N-t_{k}^N)} \sqrt {t_j^N -t_{k}^N-v(t_{k+1}^N-t_{k}^N)}}  \mathrm d v\\
&& \qquad \leq \sum_{k=i+1}^{j-2}  \frac { (t_{k+1}^N-t_{k}^N)^2}{\sqrt {t_{k}^N -t_i^N} \sqrt {t_j^N -t_{k+1}^N}} \\
&& \qquad \leq C  \sqrt {\gamma_1^N} \sum_{k=i+1}^{j-2}  \frac { t_{k+1}^N-t_{k}^N}{\sqrt {t_{k}^N -t_i^N}} \\
&& \qquad \leq C  \sqrt {\gamma_1^N} \int_{t_{i}^N}^{t_{j}^N}  \frac {1}{\sqrt {v -t_i^N}} \mathrm d v
\\
&& \qquad \leq C  \sqrt {\gamma_1^N} \sqrt {t_j^N -t_i^N}.
\end{eqnarray*}
For $k=i<j-1$ we have that 
\begin{eqnarray*}
&& \int_0^1    \frac { (t_{k+1}^N-t_{k}^N)^2}{\sqrt {t_{k}^N -t_i^N+v(t_{k+1}^N-t_{k}^N)} \sqrt {t_j^N -t_{k}^N-v(t_{k+1}^N-t_{k}^N)}}  \mathrm d v\\
&& \qquad \leq \int_0^1  \frac {1}{\sqrt {v}}  \mathrm d v  \frac {(t_{i+1}^N-t_{i}^N)^{3/2} } {\sqrt{t_{j}^N-t_{i+1}^N} }
\\
&& \qquad \leq C  \sqrt {\gamma_1^N} \sqrt {t_j^N -t_i^N}.
\end{eqnarray*}
For $k=j-1 > i$ we have that 
\begin{eqnarray*}
&& \int_0^1    \frac { (t_{k+1}^N-t_{k}^N)^2}{\sqrt {t_{k}^N -t_i^N+v(t_{k+1}^N-t_{k}^N)} \sqrt {t_j^N -t_{k}^N-v(t_{k+1}^N-t_{k}^N)}}  \mathrm d v\\
&& \qquad \leq \int_0^1  \frac {1}{\sqrt {v}}  \mathrm d v  \frac {(t_{j}^N-t_{j-1}^N)^{3/2} } {\sqrt{t_{j-1}^N-t_{i}^N} }
\\
&& \qquad \leq C  \sqrt {\gamma_1^N} \sqrt {t_j^N -t_i^N}.
\end{eqnarray*} Finally, we have for $k=j-1 = i$  that 
\begin{eqnarray*}
&& \int_0^1    \frac { (t_{k+1}^N-t_{k}^N)^2}{\sqrt {t_{k}^N -t_i^N+v(t_{k+1}^N-t_{k}^N)} \sqrt {t_j^N -t_{k}^N-v(t_{k+1}^N-t_{k}^N)}}  \mathrm d v\\
&& \qquad \leq \int_0^1  \frac {1}{\sqrt {v(1-v)}}  \mathrm d v  (t_{j}^N-t_{i}^N)
\\
&& \qquad \leq C  \sqrt {\gamma_1^N} \sqrt {t_j^N -t_i^N}.
\end{eqnarray*} 
This concludes the proof of \eqref{eq:ineqlem42}. For the statement of the lemma it remains to show \eqref{eq:claimboindI} and \eqref{eq:claimboindII}. 

For a proof of \eqref{eq:claimboindI} note first that terms with derivatives  of second order in $L_s^N$ and $\tilde L_s^N$  are equivalent. Hence the terms with derivatives of fourth order in $(L_s^N)^2-2 L_s^N \tilde L_s^N+ ( \tilde L_s^N)^2 $  cancels. Thus the operator $(L_s^N)^2-2 L_s^N \tilde L_s^N+ ( \tilde L_s^N)^2 $ contains only derivatives up to second order. Because of the truncation we have that the coefficients of the operator are bounded by $C \ln^2(1/\gamma_1^N)$. The integrand of the integral in $I$ contains terms with first and second order derivatives of $\tilde q_N(s_k,t_i^N,t_j^N,z,y) $ with respect to $z$. Terms with first order derivatives contain only integrable singularities. For terms with second order derivatives we make use of partial integration and get products of first order derivatives of $\tilde q_N(s_k,t_i^N,t_j^N,z,y) $ and of $q_N(t_i^N,s_k,y,z) $ with respect to $z$. We now use that the norm of the derivative of $q_N(t_i^N,s_k,y,z) $ with respect to $z$ can be bounded by $C (s_k-t_i^N)^{-1/2} q_N(t_i^N,s_k,y,z) $, see (3.12) in   \cite{menpeszha21}. Using these arguments we get the bound 
\eqref{eq:claimboindI}.

For the proof of \eqref{eq:claimboindII} we define
$v=(v_1,...,v_d)^t= \bar \theta^N_s + \delta \sqrt {\gamma_k^N} \chi_N(x)$ and we get for fixed $1 \leq i,l \leq d$
\begin{eqnarray} \nonumber
\frac {\partial} {\partial s} [F_N(s,z)] _ {i,l} &= &\sqrt{\frac {\gamma_{k+ \mathbf{1}_{\|x\|\geq a_N}}^N} {\gamma_{k+1}^N}}\int_0^1 \frac {\partial} {\partial s} \frac {\partial h_i} {\partial v_l}(v) \mathrm d \delta\\ \label{eq:boundFNsz1}
&=&\sqrt{\frac {\gamma_{k+ \mathbf{1}_{\|x\|\geq a_N}}^N} {\gamma_{k+1}^N}}\int_0^1 \sum_{r=1}^d \frac {\partial^2 h_i} {\partial v_l\ \partial v_r}(v) h_n(\bar \theta^N_s)\mathrm d \delta.
\end{eqnarray}
Furthermore, with $w=(w_1,...,w_d)^t=  z_s + \delta \sqrt {\gamma_k^N}  \chi_N(z_s)$ and $z_s= \theta_{s,t_j^N}^N(y)$ we get that
\begin{eqnarray} \nonumber
\frac {\partial} {\partial s} [F_N(s,z_s)] _ {i,l}  [\chi_N(z_s) ]_l&= &\sqrt{\frac {\gamma_{k+ \mathbf{1}_{\|x\|\geq a_N}}^N} {\gamma_{k+1}^N}}\int_0^1 \frac {\partial} {\partial s} \frac {\partial h_i} {\partial w_l}(w) \mathrm d \delta [\chi_N(z_s) ]_l \\
\nonumber && \qquad + [F_N(s,z_s)] _ {i,l} \sum_{r,q=1}^d \frac
{ \partial [\chi_N(z_s) ]_l}{\partial w_r} [F_N(s,z_s)] _{rq}  [\chi_N(z_s) ]_q
\\ \label{eq:boundFNsz2}
&=&\sqrt{\frac {\gamma_{k+ \mathbf{1}_{\|x\|\geq a_N}}^N} {\gamma_{k+1}^N}}\int_0^1 \sum_{r=1}^d \frac {\partial^2 h_i} {\partial w_l\ \partial w_r}(w) \frac {\partial w_r} {\partial s}\mathrm d \delta \\
\nonumber && \qquad + [F_N(s,z_s)] _ {i,l} \sum_{r,q=1}^d \frac
{ \partial [\chi_N(z_s) ]_l}{\partial w_r} [F_N(s,z_s)] _{rq}  [\chi_N(z_s) ]_q.
\end{eqnarray}
Now, we have that 
\begin{eqnarray*}\frac {\partial w_r} {\partial s} &=& \frac {\partial (z_{s,r} +  \delta \sqrt {\gamma_k^N} [ \chi_N(z_s)]_r)} {\partial s} \\ &=&  \sum_{q=1} ^d  [F_N(s,z_s)] _{rq}  [\chi_N(z_s) ]_q+  \delta \sqrt {\gamma_k^N} \sum_{p,q=1} ^d \frac { [ \chi_N(z_s)]_r} {\partial z_{s,p}} [F_N(s,z_s)] _{pq}  [\chi_N(z_s) ]_q. \end{eqnarray*} This shows that
$\| \frac {\partial w_r} {\partial s}\| \leq C a_N$. Using this bound we get from \eqref{eq:boundFNsz2} that 
$$\left \|\frac {\partial} {\partial s} [F_N(s,z_s)] _ {i,l}  [\chi_N(z_s) ]_l \right  \| \leq C a_N.$$
Furthermore from \eqref{eq:boundFNsz1} we have that $$\left \|\frac {\partial} {\partial s} [F_N(s,z)] _ {i,l} \right  \| \leq C .$$
From the last two inequalities we conclude that 
\begin{eqnarray*}
&&\bigg |  \frac {\partial}{\partial s}
  \bigg([F_N(s,z)]_{r,l} [\chi_N(z)]_l - [F_N(s,z_s]_{r,l}[ \chi_N(z_s]_l
\bigg )  \bigg |_{s=s_k}\bigg | \ \bigg | \frac {\partial \tilde q _N(s_k,t_j^N,z,y)} {\partial z_r} \bigg | \\
&&\qquad \leq C  \bigg | \frac {\partial \tilde q _N(s_k,t_j^N,z,y)} {\partial z_r} \bigg |  \ \bigg |  \frac {\partial}{\partial s}
  [F_N(s,z)]_{r,l}  \bigg |_{s=s_k} | z-z_s| + a_N\bigg |
  \\
&&\qquad \leq  \frac C {\sqrt {t_j^N -s_k}} (1+a_N)  \frac {\partial \tilde q _N(s_k,t_j^N,z,y)} {\partial z_r}.
\end{eqnarray*}
This bound can be used to show \eqref{eq:claimboindII}.
\end{proof}

\begin{proof}[Proof of Lemma \ref{lem2:comptrunc}] For the proof of the lemma we use the bound \begin{eqnarray*}
&&\left | \tilde q_N \otimes _N H_N^{(r)} (t_i^N,t_j^N,x,y) \right | \leq C^r \frac {\Gamma^r \left (\frac 1 2\right) } {\Gamma \left (\frac r 2\right) } \gamma_{k+1} ^N (t_j^N - t_i^N)^{\frac r 2 -1} \int _{\mathbb R^d} \bar q_N(t_i^N,t_k^N,x,z)\\
&& \qquad \qquad \times \bar q_N(t_k^N,t_j^N,z,y) \mathrm d z \\
&&\qquad \leq   \frac {C^r } {\Gamma \left (1+\frac r 2\right) } \bar q_N(t_i^N,t_j^N,x,y), 
\end{eqnarray*}
where the constant $C$ does not depend on $r$.With another constant $C'$ it holds that
$$\sum_{r=N+1}^\infty \frac {C^r } {\Gamma \left (1+\frac r 2\right) }  \leq C' e ^{-C'N}.$$
This immediately implies the statement of the lemma.
\end{proof}

\begin{proof}[Proof of Lemma \ref{lem4:comptrunc}] The proof of this lemma heavily depends on the bound 
\begin{eqnarray} \label{eq:bound MN}
&&\left | M_N(t_i^N,t_j^N, x, y) \right | \leq C \frac { \sqrt{\gamma_{i+1}^N}}{t_j^N-t_i^N} 
 \mathcal Q_{M-d-6} (t_j ^N-t_i ^N, \theta^N_{t_i ^N,t_j ^N}(y)-x ),
\end{eqnarray}
for $j> i$
where 
 \begin{eqnarray*} &&M_N(t_i^N,t_j^N, x, y)= \mathcal K_N(t_i^N,t_j^N, x, y) - K_N(t_i^N,t_j^N, x, y).\end{eqnarray*} 
 The proof of this bound can be found in Subsection \ref{sec:bound MN}. For a proof of the lemma note first that 
 \begin{eqnarray*} 
&&\left | ( \tilde p_N \otimes_N \mathcal K_N- \tilde p_N \otimes_N  K_N)(t_i^N,t_j^N, x, y) \right | = \left | ( \tilde p_N \otimes_N M_N)(t_i^N,t_j^N, x, y) \right | \\
&& \qquad \leq  \sum_{k=i}^{j-1} \gamma_{k+1}^N \int_{\R^d} | \tilde p_N (t_i^N, t_k ^N, x,z) |  |M_N (t_k ^N, t_j ^N, z,y) |\mathrm d z\\
&& \qquad \leq C \sum_{k=i}^{j-1} \frac {\sqrt{\gamma_{k+1}^N}}{t_j^N - t_k^N} \gamma_{k+1}^N  \int_{\R^d} \mathcal Q_{M-d-6} (t_k ^N-t_i^N, z-\theta^N_{t_k ^N,t_i^N}(x) )
 \mathcal Q_{M-d-6} (t_j ^N-t_k ^N, y-\theta^N_{t_j ^N,t_k ^N}(z) )\mathrm d z\\
&& \qquad \leq  C \sum_{k=i}^{j-1} \frac {\sqrt{\gamma_{k+1}^N}}{t_j^N - t_k^N} \gamma_{k+1}^N \mathcal Q_{M-d-6} (t_j ^N-t_i^N, y-\theta^N_{t_j ^N,t_i^N}(x) )\\
&& \qquad \leq  C \sqrt{\gamma_1^N} \left ( \int_{\gamma_j^N}^{t_j^N -t_i^N} \frac {\mathrm du}{u} +1 \right ) \mathcal Q_{M-d-6} (t_j ^N-t_i^N, y-\theta^N_{t_j ^N,t_i^N}(x) )\\
&& \qquad \leq  C \sqrt{\gamma_1^N} \ln{\left ( \frac {e (t_j^N-t_i^N)}{\gamma_j^N}\right) } \mathcal Q_{M-d-6} (t_j^N-t_i^N, y-\theta^N_{t_j ^N,t_i^N}(x) )\\
&& \qquad \leq  C \sqrt{\gamma_1^N} \ln{\left ( \frac {C (t_j^N-t_i^N)}{\gamma_1^N}\right) } \mathcal Q_{M-d-6} (t_j ^N-t_i^N, y-\theta^N_{t_j ^N,t_i^N}(x) ).
\end{eqnarray*}
For $r>1$ we use the following recursion argument
 \begin{eqnarray}  \label{eq:iterMN}
&&\left | ( \tilde p_N \otimes_N \mathcal K_N^{(r+1)}- \tilde p_N \otimes_N  K_N^{(r+1)})(t_i^N,t_j^N, x, y) \right | \\
&& \leq \left | ( \tilde p_N \otimes_N \mathcal K_N^{(r)}- \tilde p_N \otimes_N  K_N^{(r)})\right | \otimes_N    |\mathcal K_N| 
(t_i^N,t_j^N, x, y) + \left | ( \tilde p_N \otimes_N  K_N^{(r)})\right | \otimes_N    |M_N| 
(t_i^N,t_j^N, x, y) . \nonumber 
\end{eqnarray}
For the second term on the right hand side of \eqref{eq:iterMN} we get with the help of \eqref{eq:lem3cent4}
 \begin{eqnarray*} 
&& \left | ( \tilde p_N \otimes_N  K_N^{(r)})\right | \otimes_N    |M_N| 
(t_i^N,t_j^N, x, y) \\
&& \leq    \sum_{k=i}^{j-1}\frac {(C(t_k^N-t_i^N))^r}{r!}  \frac {\sqrt{\gamma_{k+1}^N}}{t_j^N - t_k^N} \gamma_{k+1}^N\int_{\R^d} \mathcal Q_{M-d-6} (t_k ^N-t_i^N, z-\theta^N_{t_k ^N,t_i^N}(x) )\\
&&\qquad \times
 \mathcal Q_{M-d-6} (t_j ^N-t_k ^N, y-\theta^N_{t_i ^N,t_k ^N}(z) )\mathrm d z\\
 && \leq    \sum_{k=i}^{j-1}\frac {(C(t_k^N-t_i^N))^r}{r!}  \frac {\sqrt{\gamma_{k+1}^N}}{t_j^N - t_k^N} \gamma_{k+1}^N\mathcal Q_{M-d-6} (t_j ^N-t_i^N, y-\theta^N_{t_j ^N,t_i^N}(x) ) .
\end{eqnarray*}
This gives 
\begin{eqnarray*} 
&& \sum_{r=0}^\infty\left | ( \tilde p_N \otimes_N  K_N^{(r)})\right | \otimes_N    |M_N| 
(t_i^N,t_j^N, x, y) \\
&& \leq  C  \sum_{k=i}^{j-1} \frac {\sqrt{\gamma_{k+1}^N}}{t_j^N - t_k^N} \gamma_{k+1}^N\mathcal Q_{M-d-6} (t_j ^N-t_i^N, y-\theta^N_{t_j ^N,t_i^N}(x) ) \\
&& \qquad \leq  C \sqrt{\gamma_1^N} \ln{\left ( \frac {C (t_j^N-t_i^N)}{\gamma_1^N}\right) } \mathcal Q_{M-d-6} (t_j ^N-t_i^N, y-\theta^N_{t_j ^N,t_i^N}(x) ).
\end{eqnarray*}
 We now treat the first  term on the right hand side of \eqref{eq:iterMN}. First note that for $r=1$
  \begin{eqnarray*} 
&& \left |  \tilde p_N \otimes_N  (K_N +M_N)^{(1)}- \tilde p_N \otimes_N  K_N ^{(1)})\right | \otimes_N    |\mathcal K_N| 
(t_i^N,t_j^N, x, y) \\
&& \leq C \sqrt{\gamma_1 ^N}  \sum_{k=i}^{j-1}\gamma_{k+1}^N \ln{\frac {C(t_k^N-t^N_i)}{\gamma_{1}^N}} \int_{\R^d} \mathcal Q_{M-d-6} (t_k ^N-t_i^N, z-\theta^N_{t_k ^N,t_i^N}(x) )\\
&& \qquad \qquad \times
 \mathcal Q_{M-d-6} (t_j ^N-t_k ^N, \theta^N_{t_k ^N,t_j ^N}(y)-z )\mathrm d z\\
 && \leq C \sqrt{\gamma_1 ^N} \int_{0}^{t_j ^N-t^N_i} \ln{\left(\frac {Cu}{\gamma_{1}^N}\right )} \mathrm d u  \mathcal Q_{M-d-6} (t_j ^N-t_i^N, y-\theta^N_{t_i ^N,0}(x) )\\
  && = C \sqrt{\gamma_1 ^N} {(t_j ^N-t_i^N)} \left (\ln{\left (\frac {C(t_j^N-t^N_i)}{\gamma_{1}^N}\right )}-1\right )  \mathcal Q_{M-d-6} (t_j ^N-t_i^N, y-\theta^N_{t_j^N,t_i ^N}(x) )\\
 && \leq C \sqrt{\gamma_1 ^N} {(t^N_i-t_i ^N)} \ln{\left (\frac {C(t^N_j-t_i ^N)}{\gamma_{1}^N}\right )}  \mathcal Q_{M-d-6} (t_j ^N-t_i^N, y-\theta^N_{t_j ^N,t^N_i}(x) ).
\end{eqnarray*}
Similarly, we get that 
  \begin{eqnarray*} 
&& \left |  \tilde p_N \otimes_N  (K_N +M_N)^{(1)})- \tilde p_N \otimes_N  K_N ^{(1)})\right | \otimes_N    | M_N| 
(t_i^N,t^N_j, x, y) \\
&& \leq C \sqrt{\gamma_1 ^N}  \sum_{k=0}^{i-1}\gamma_{k+1}^N \frac {\sqrt{\gamma_{k+1}^N}}{t_j^N - t_k^N}\ln{\frac {C(t_k^N-t^N_i)}{\gamma_{1}^N}}  \mathcal Q_{M-d-6} (t_j ^N-t_i^N, y-\theta^N_{t_j ^N,t^N_i}(x) )\\
 && \leq C \sqrt{\gamma_1 ^N}
 \sum_{k=0}^{i-1} \gamma_{k+1}^N \frac 1 {\sqrt{t_j^N - t_k^N}}  \ln{\left(\frac {C(t_k^N-t^N_i)}{\gamma_{1}^N}\right )}  \mathcal Q_{M-d-6} (t_j ^N-t_i^N, y-\theta^N_{t_j ^N,t^N_i}(x) )\\
&& \leq C \sqrt{\gamma_1 ^N}
 \int _{t^N_i}^{t_j^N} \frac 1 {\sqrt{t_j^N - u}}  \ln{\left(\frac {C(u-t^N_i)}{\gamma_{1}^N}\right )}\mathrm d u \mathcal Q_{M-d-6} (t_j ^N-t_i^N, y-\theta^N_{t_j^N,t^N_i}(x) )\\
 && \leq 2 C \sqrt{\gamma_1 ^N}
 {\sqrt{t^N_j-t_i^N }}  \ln{\left(\frac {C(t_j^N-t^N_i)}{\gamma_{1}^N}\right )}  \mathcal Q_{M-d-6} (t_j ^N-t_i^N, y-\theta^N_{t_j ^N,t^N_i}(x) ).\end{eqnarray*}
 It follows that
  \begin{eqnarray*} 
&& \left |  \tilde p_N \otimes_N  (K_N +M_N)^{(1)})- \tilde p_N \otimes_N  K_N ^{(1)})\right | \otimes_N    | K_N +M_N| 
(t^N_i,t_j^N, x, y) \\
 && \leq 2 C \sqrt{\gamma_1 ^N}
 {\sqrt{t_i^N-t^N_j }}  \ln{\left(\frac {C(t^N_j-t_i^N)}{\gamma_{1}^N}\right )}  \mathcal Q_{M-d-6} (t_j ^N-t_i^N, y-\theta^N_{t_j ^N,t^N_i}(x) )\end{eqnarray*} 
 and 
  \begin{eqnarray*} 
&& \left |  \tilde p_N \otimes_N  (K_N +M_N)^{(2)})- \tilde p_N \otimes_N  K_N ^{(2)})\right | 
(t^N_i,t_j^N, x, y) \\
 && \leq 3 C \sqrt{\gamma_1 ^N}
 {\sqrt{t^N_j-t_i^N }}  \ln{\left(\frac {C(t^N_j-t_i^N)}{\gamma_{1}^N}\right )}  \mathcal Q_{M-d-6} (t_j ^N-t_i^N, y-\theta^N_{t_i ^N,0}(x) ).\end{eqnarray*} 
 Note that in the upper bound we now have the factor $ {\sqrt{t^N_j -t_i^N }}  \ln{\left(\frac {C(t^N_j-t_i^N)}{\gamma_{1}^N}\right )} $ instead of $  \ln{\left(\frac {C(t^N_j-t_i^N)}{\gamma_{1}^N}\right )} $. In the following iterations we make use of $$\int_0^{s} \sqrt {u} \ln {\left ( \frac {Cu}{ \gamma_1^N}\right )} \mathrm d u \leq \frac 2 3 s ^{\frac 3 2} \ln {\left ( \frac {Cs}{ \gamma_1^N}\right )}$$ and  $$\frac 2 3 \int_0^{s} u^{\frac 3 2} \ln {\left ( \frac {Cu}{ \gamma_1^N}\right )} \mathrm d u \leq \frac 2 3 \frac 2 5 s ^{\frac 5 2} \ln {\left ( \frac {Cs}{ \gamma_1^N}\right )}.$$ 
 Continuing in this way we obtain
  \begin{eqnarray*} 
&& \left |  \tilde p_N \otimes_N  (K_N +M_N)^{(r)})- \tilde p_N \otimes_N  K_N ^{(r)})\right | (t^N_i,t^N_j, x, y) \\
 && \leq C^r \frac {(t^N_j-t^N_i)^r}{(2r-1)!} \sqrt{\gamma_1 ^N}
  \ln{\left(\frac {C(t^N_j-t^N_i)}{\gamma_{1}^N}\right )}  \mathcal Q_{M-d-6} (t^N_j-t^N_i, y-\theta^N_{t^N_j,t^N_i}(x) ).\end{eqnarray*} 
  Summing this up we get the bound
 \begin{eqnarray*} 
&& \left | \sum_{r=0} ^N  \tilde p_N \otimes_N  (K_N +M_N)^{(r)}) (t^N_i,t^N_j, x, y)- \sum_{r=0} ^N \tilde p_N \otimes_N  K_N ^{(r)}(t^N_i,t^N_j, x, y)\right |  \\
 && \leq C \sqrt{\gamma_1 ^N}
  \ln{\left(\frac {1}{\gamma_{1}^N}\right )}  \mathcal Q_{M-d-6} (t_j ^N-t_i^N, y-\theta^N_{t^N_j,t^N_i}(x) ),\end{eqnarray*}  
  which concludes the proof of the lemma.
\end{proof}

\subsection {Proof of inequality \eqref{eq:bound MN}} \label{sec:bound MN}
We start by showing \eqref{eq:bound MN} for $j-i > 1$. The case $j= i+1$ will be treated afterwards. First we will show that
\begin{eqnarray} \label{eq:bound MN1}
M_N(t_i^N,t_j^N,x,y) = T_1 + T_2 + T_3 +T_4 , \end {eqnarray}
where with $  \Delta_i\theta^N(y)  =  \theta^N_{t_{i+1}^N,t_j^N}(y)- \theta^N_{t_i^N,t_j^N}(y)$ and $T_{4}= T_{4,1} - T_{4,2}$
\begin{eqnarray*}
T_1&=& \sum_{l=1} ^d \left [ F_N(t_i^N, \theta^N_{t_i^N,t_j^N}(y)\chi_N(\theta^N_{t_i^N,t_j^N}(y)) - \frac 1 {\gamma_{i+1}^N} \Delta_i\theta^N(y) \right ] \frac {\partial}{ \partial x_l} \tilde p_N(t_{i+1}^N,t_j^N,x,y)  ,\\
T_2&=& \frac 1 2 \sum_{l,m=1} ^d \left [  R_{l,m} \left (\bar \theta^N_{t_i}+\chi_N(x) \sqrt{ \gamma_i^N}\right )- R_{l,m} \left (\bar \theta^N_{t_i}+\chi_N(\theta^N_{t_i^N,t_j^N}(y))\sqrt{ \gamma_i^N}\right ) \right ] \\
&& \qquad \times \frac {\partial^2}{ \partial x_l \partial x_m} \tilde p_N(t_{i+1}^N,t_j^N,x,y)  ,\\
T_3&=& \frac {\gamma_{i+1}^N}  2 \sum_{l,m=1} ^d  \left (\left [  F_N \left (t_{i}^N,x\right )\chi_N(x) \right ]_l \left [  F_N \left (t_{i}^N,x\right )\chi_N(\theta^N_{t_i^N,t_j^N}(y)
) \right ]_m - 
\left [  F_N \left (t_{i}^N,\theta^N_{t_i^N,t_j^N}(y)
\right )\right. \right .
\\
&& \qquad
 \left . \left . 
\times
\chi_N(\theta^N_{t_i^N,t_j^N}(y)
) \right ]_l \right .  \left.\left [  F_N \left (t_{i}^N,\theta^N_{t_i^N,t_j^N}(y)\right )\chi_N(\theta^N_{t_i^N,t_j^N}(y)
) \right ]_m \right )
 \frac {\partial^2}{ \partial x_l \partial x_m} \tilde p_N(t_{i+1}^N,t_j^N,x,y)  ,\\
 T_{4,1}&=& 3 \sqrt{\gamma_{i+1}^N}\int_{\R^d} f^N_{t_i^N,\chi_{N}(x)\sqrt{\gamma_{i}^{N}}}(v) \sum_{|\nu| =3} \frac {\left [v + F_N(t_i^N,x) \chi_N(x) \sqrt {\gamma_{i+1} ^N} \right ]^\nu} {\nu !}  \\
&& \qquad \times\int_0^1 ( 1- \delta) ^2 D^\nu \tilde p_N^y\left (t_{i+1}^N, t_j^N,  x + \delta \left (v \sqrt {\gamma_{i+1} ^N} + F_N(t_i^N,x) \chi_N(x) \gamma_{i+1} ^N\right ), y\right ) \mathrm d \delta \mathrm dv,\\
T_{4,2}&=&3 \sqrt{\gamma_{i+1}^N}\int_{\R^d} f^N_{t_i^N,\theta^N_{t_i^N,t_j^N}(y)}(v) \sum_{|\nu| =3} \frac {\left [v + \Delta_i\theta^N(y) \right ]^\nu} {\nu !}  \\
&& \qquad \times\int_0^1 ( 1- \delta) ^2 D^\nu \tilde p_N^y\left (t_{i+1}^N, t_j^N,  x + \delta \left(v \sqrt {\gamma_{i+1} ^N} + \Delta_i\theta^N(y)\right), y\right) \mathrm d \delta \mathrm dv,
\end{eqnarray*}
where $f^N_{t_i^N,\theta} $ is the density of $$\xi_{i,\theta}= H\left ( \bar \theta^N_{t_i} + \chi_N (\theta) \sqrt {\gamma_i^N} , \eta_{i+1}\right ) - \E \left [H\left ( \bar \theta^N_{t_i} + \chi_N (\theta) \sqrt {\gamma_i^N} , \eta_{i+1}\right ) \right ].$$ 
For $1 \leq k \leq 4$ we will show that
\begin{eqnarray} \label{eq:bound MN2}
&&\left | T_k \right | \leq C \frac { \sqrt{\gamma_{i+1}^N}}{t_j^N-t_i^N} 
 \mathcal Q_{M-d-6} (t_j ^N-t_i ^N, \theta^N_{t_i ^N,t_j ^N}(y)-x ),
\end{eqnarray}
for $j-i > 1$. Claim \eqref{eq:bound MN} follows then from \eqref{eq:bound MN1} and \eqref{eq:bound MN2}. We start with a proof of \eqref{eq:bound MN1}.
Note first that for the one-step transition densities we have
\begin{eqnarray*} 
p_N( t_{i}^ N, t_{i+1}^ N, x,z) &=& (\gamma_{i+1}^N)^{-d/2} f^N_{t_i^N,\chi_{N}(x)\sqrt{\gamma_{i}^{N}}} \left ( \frac {y-x-F_N(t_i^N,x) \chi_N(x) \gamma_{i+1}^N}{\sqrt{\gamma_{i+1}^N}}\right ),\\
\tilde p_N^y( t_{i}^ N, t_{i+1}^ N, x,z) &=& (\gamma_{i+1}^N)^{-d/2} f^N_{t_i^N,\theta^N_{t_i^N, t_j^N}(y)} \left ( \frac {y-x-\int
_{t_i^N}^{t_j^N} F_N(
u,
\theta^N_{u, t_j^N}(y)) \chi_N(\theta^N_{u, t_j^N}(y)) \gamma_{i+1}^N \mathrm d u}{\sqrt{\gamma_{i+1}^N}}\right ).
\end{eqnarray*} 
With these representations of the  one-step transition densities we get with  $\varphi = \tilde p^y_N$  that
\begin{eqnarray*} 
&&\mathcal K_N( t_{i}^ N, t_{j}^ N, x,y) = \left ( {\mathcal L}_N - \tilde {\mathcal L}_N\right ) \varphi( t_{i}^ N, t_{j}^ N, x,y)\\
&& \qquad =  \left ( {\mathcal L}_N - \tilde {\mathcal L}_N\right )_1 \varphi( t_{i}^ N, t_{j}^ N, x,y) -  \left ( {\mathcal L}_N - \tilde {\mathcal L}_N\right )_2 \varphi( t_{i}^ N, t_{j}^ N, x,y) ,
\end{eqnarray*} 
where 
\begin{eqnarray*} 
&&\left ( {\mathcal L}_N - \tilde {\mathcal L}_N\right )_1 \varphi( t_{i}^ N, t_{j}^ N, x,y)  =  \frac1 {\gamma_{i+1}^N} \int_{\mathbb R^d} f^N_{t_i^N,\chi_{N}(x)\sqrt{\gamma_{i}^{N}}} \left ( \frac {z-x-F_N(t_i^N,x) \chi_N(x) \gamma_{i+1}^N}{\sqrt{\gamma_{i+1}^N}}\right )\\
&& \qquad \times \left ( \sum_{1 \leq | \nu |\leq 2} \frac {(z-x)^\nu} {\nu!} (D^\nu\varphi)( t_{i+1}^ N, t_{j}^ N, x,y) \right. 
\\
&& \qquad \left. + 3 \sum_{ | \nu |=3} \frac {(z-x)^\nu} {\nu!}\int_0^1 (1 - \delta) ^2  (D^\nu\varphi)( t_{i+1}^ N, t_{j}^ N, x + \delta (z-x),y) \mathrm d \delta \right) \mathrm d z
\end{eqnarray*} 
and 
\begin{eqnarray*} 
&&\left ( {\mathcal L}_N - \tilde {\mathcal L}_N\right )_2 \varphi( t_{i}^ N, t_{j}^ N, x,y)  =  \frac1 {\gamma_{i+1}^N} \int_{\mathbb R^d} f^N_{t_i^N,\theta^N_{t_i^N,t_j^N(y)}} \left ( \frac {z-x-\Delta_i\theta^N(y)}{\sqrt{\gamma_{i+1}^N}}\right )\\
&& \qquad \times \left ( \sum_{1 \leq | \nu |\leq 2} \frac {(z-x)^\nu} {\nu!} (D^\nu\varphi)( t_{i+1}^ N, t_{j}^ N, x,y) \right. 
\\
&& \qquad \left. + 3 \sum_{ | \nu |=3} \frac {(z-x)^\nu} {\nu!}\int_0^1 (1 - \delta) ^2  (D^\nu\varphi)( t_{i+1}^ N, t_{j}^ N, x + \delta (z-x),y) \mathrm d \delta \right) \mathrm d z.
\end{eqnarray*} 
We now note that
\begin{eqnarray} \label{eq:bound MN3}
&&\left ( {\mathcal L}_N - \tilde {\mathcal L}_N\right )_1 \varphi( t_{i}^ N, t_{j}^ N, x,y)  =  \frac1 {\gamma_{i+1}^N} \int_{\mathbb R^d} f^N_{t_i^N,\chi_{N}(x)\sqrt{\gamma_{i}^{N}}} (v))\\
&& \qquad \times  \left( \sum_{1 \leq | \nu |\leq 2} \frac {\left (
v\sqrt{\gamma_{i+1}^N}
+F_N(t_i^N,x) \chi_N(x) \gamma_{i+1}^N
 \right )^\nu} {\nu!} (D^\nu\varphi)( t_{i+1}^ N, t_{j}^ N, x,y) \right. \nonumber 
\\
&& \qquad \left. + 3 \sum_{ | \nu |=3} \frac {(z-x)^\nu} {\nu!}\int_0^1 (1 - \delta) ^2  (D^\nu\varphi)( t_{i+1}^ N, t_{j}^ N, x + \delta \left (
v\sqrt{\gamma_{i+1}^N}
+F_N(t_i^N,x) \chi_N(x) \gamma_{i+1}^N
 \right ),y) \mathrm d \delta \right) \mathrm d z \nonumber 
 \end{eqnarray} 
 \begin{eqnarray}  \nonumber 
     && = \sum_{l=1} ^d \left [ F_N(t_i^N,x) \chi_N(x)\right ]_l \frac {\partial} {\partial x_l} \varphi( t_{i}^ N, t_{j}^ N, x,y)\\
     && \qquad + \frac 1 2  \sum_{l,m=1} ^d \left (R_{l,m}\left (\bar \theta^N_{t_i^N}+ \chi_N(x) \sqrt{\gamma_{i+1}^N}\right)
     + \frac {\gamma_{i+1}^N} 2 \left [ F_N(t_i^N,x) \chi_N(x)\right ]_l \left [ F_N(t_i^N,x) \chi_N(x)\right ]_m
     \right )\nonumber  \\ && \nonumber  \qquad \qquad \times  \frac {\partial^2} {\partial x_l \partial x_m} \varphi( t_{i}^ N, t_{j}^ N, x,y)\\ \nonumber 
     && \qquad + 3  \sqrt{\gamma_{i+1}^N} \int_{\mathbb R^d} f^N_{t_i^N,\chi_{N}(x)\sqrt{\gamma_{i}^{N}}} (v) \sum_{|\nu| = 3} \frac { \left (
v
+F_N(t_i^N,x) \chi_N(x) \sqrt{\gamma_{i+1}^N}
 \right )^\nu}{\nu!} \int_0^1 (1 - \delta) ^2\\ \nonumber 
 && \nonumber   \qquad \qquad \times (D^\nu\varphi) 
  \left ( t_{i+1}^ N, t_{j}^ N, x+ \delta \left (
v\sqrt{\gamma_{i+1}^N}
+F_N(t_i^N,x) \chi_N(x) \gamma_{i+1}^N,y
 \right ) \right ) \mathrm d \delta \mathrm d v. \nonumber 
\end{eqnarray} 
By similar arguments one obtains that
\begin{eqnarray} \label{eq:bound MN4}
&&\left ( {\mathcal L}_N - \tilde {\mathcal L}_N\right )_2 \varphi( t_{i}^ N, t_{j}^ N, x,y)   =\frac 1 {\gamma_{i+1}^N}  \sum_{l=1} ^d \left [ \Delta_i\theta^N(y)\right ]_l \frac {\partial} {\partial x_l} \varphi( t_{i+1}^ N, t_{j}^ N, x,y)\\
     && \qquad + \frac 1 2  \sum_{l,m=1} ^d \left (R_{l,m}\left (\bar \theta^N_{t_i^N}+ \chi_N(\theta^N_{t_i^N},t_j^N(y)) \sqrt{\gamma_{i+1}^N}\right)
    + \frac {\gamma_{i+1}^N} 2 \left [ \Delta_i\theta^N(y)\right ]_l \left [ \Delta_i\theta^N(y)\right ]_m
     \right )\nonumber  \\ && \nonumber  \qquad \qquad \times  \frac {\partial^2} {\partial x_l \partial x_m} \varphi( t_{i+1}^ N, t_{j}^ N, x,y)\\ \nonumber 
    && \qquad + 3  \sqrt{\gamma_{i+1}^N} \int_{\mathbb R^d} f^N_{t_i^N,\theta^N_{t_i^N,t_j^N(y)}} (v) \sum_{|\nu| = 3} \frac { \left (
v+ \Delta_i\theta^N(y)
 \right )^\nu}{\nu!}\\ \nonumber 
 && \nonumber   \qquad \qquad \times  \int_0^1 (1 - \delta) ^2 (D^\nu\varphi) 
  \left ( t_{i+1}^ N, t_{j}^ N, x+ \delta \left (
v\sqrt{\gamma_{i+1}^N}
+\Delta_i\theta^N(y),y
 \right ) \right ) \mathrm d \delta \mathrm d v. \nonumber 
\end{eqnarray}

Claim \eqref{eq:bound MN1} now follows for  $M_N = \mathcal K_N -K_N$ by using \eqref{eq:bound MN3}, \eqref{eq:bound MN4} and 
\begin{eqnarray*} 
&&K_N(t_i^N, t_j^N, z,y) = (L_{t_i^N}^N - \tilde L_{t_i^N}^N)  \tilde p_N(t_i^N, t_j^N,z,y)\\
&& \qquad =\sum_{l,m=1}^d  \left ([F_N(t_i^N,z)]_{l,m}[\chi_N(z)]_m
- [F_N(t_i^N,\theta_{t_i^N,t_j^N} ^N(y))]_{l,m}[\chi_N(\theta_{t_i^N,t_j^N} ^N(y))]_m \right )    \\
&& \qquad \qquad    \times
\frac {\partial} {\partial z_l} \tilde p_N(t_i^N, t_j^N,z,y).
\end{eqnarray*}

We now come to the proof of  \eqref{eq:bound MN2} for $1 \leq k \leq 4$. For  $k=1$ note that by application of \eqref{eqapp5} and \eqref{eq:boundstheta1}
\begin{eqnarray*}
|T_1|&\leq& \sum_{l=1} ^d \frac 1 {\gamma_{i+1}^N} \int_{t_i^N}^{t_{i+1}^N} \left |\left [ F_N(t_i^N, \theta^N_{t_i^N,t_j^N}(y))\chi_N(\theta^N_{t_i^N,t_j^N}(y)) - F_N(u, \theta^N_{u,t_j^N}(y))\chi_N(\theta^N_{u,t_j^N}(y))
 \right ] _l \right | \mathrm d u \\
 && \qquad \times \left |\frac {\partial}{ \partial x_l} \tilde p_N(t_{i+1}^N,t_j^N,x,y)  \right |
 \\
 &\leq& C\left ( \max_{t_i^N \leq u \leq t_{i+1}^N} \left |  \theta^N_{u,t_j^N}(y) -  \theta^N_{t_i^N,t_j^N}(y)\right | + |y| \gamma_{i+1} ^N \right ) \\
 && \qquad \times
 (t_j^N-t_i^N)^{-1/2}
 \mathcal Q_{M-d-6} (t_j ^N-t_i ^N, \theta^N_{t_i ^N,t_j ^N}(y)-x ) \\
 &\leq& C a_N \gamma_{i+1}^N
 (t_j^N-t_i^N)^{-1/2}
 \mathcal Q_{M-d-6} (t_j ^N-t_i ^N, \theta^N_{t_i ^N,t_j ^N}(y)-x ),
\end{eqnarray*}
where the last inequality follows directly if $|y| \leq a_N +1$. If  $|y| > a_N +1$ we get $\theta^N_{t,t_j^N}(y) \equiv y$ for $t \leq t_j^N$ is a solution of    $\frac {\mathrm d } { \mathrm dt } \theta^N_{t,t_j^N}(y)  =  F_N(t,  \theta^N_{t,t_j^N}(y) )\chi_N(\theta^N_{t,t_j^N}(y))$ with terminating value  $\theta^N_{t_j^N,t_j^N}(y) = y$. Thus in the latter case we have that $T_1=0$. This shows that in both cases \eqref{eq:bound MN2} holds for $k=1$.

For the proof of \eqref{eq:bound MN2} for $k=2$, 
\begin{eqnarray*}
|T_2|&\leq &C \sum_{l,m=1} ^d \left |  R_{l,m} \left (\bar \theta^N_{t_i}+\chi_N(x) \sqrt{ \gamma_i^N}\right )- R_{l,m} \left (\bar \theta^N_{t_i}+\chi_N(\theta^N_{t_i^N,t_j^N}(y))\sqrt{ \gamma_i^N}\right ) \right| \\
&& \qquad \times \left | \frac {\partial^2}{ \partial x_l \partial x_m} \tilde p_N(t_{i+1}^N,t_j^N,x,y)  \right | \\
&\leq &C \sqrt{ \gamma_i^N} \sum_{l,m=1} ^d \left |  \chi_N(x) -\chi_N(\theta^N_{t_i^N,t_j^N}(y)) \right| 
\  \left | \frac {\partial^2}{ \partial x_l \partial x_m} \tilde p_N(t_{i+1}^N,t_j^N,x,y)  \right |\\
&\leq& C  \sqrt{ \gamma_i^N} 
 (t_j^N-t_{i+1}^N)^{-1/2} \left | \frac {\theta^N_{t_i^N,t_j^N}(y)-x} {(t_j^N-t_{i+1}^N)^{1/2}} \right |
 \mathcal Q_{M-d-5} (t_j ^N-t_i ^N, \theta^N_{t_i ^N,t_j ^N}(y)-x )\\
&\leq& C  \sqrt{ \gamma_{i+1}^N} 
 (t_j^N-t_{i}^N)^{-1/2} \left | \frac {\theta^N_{t_i^N,t_j^N}(y) -x} {(t_j^N-t_{i}^N)^{1/2}} \right |
 \mathcal Q_{M-d-5} (t_j ^N-t_i ^N, \theta^N_{t_i ^N,t_j ^N}(y)-x ),
 \end{eqnarray*}
 where in the last inequality we used that for some $c>0$ small enough $c/(c+1) \leq (t_j^N-t_{i+1}^N)/(t_j^N-t_{i}^N)\leq 1$ because of 
 $(t_j^N-t_{i+1}^N)/(t_j^N-t_{i}^N) = 1 - \gamma_{i+1}^N / (t_j^N-t_{i}^N)$ and $\gamma_{i+1}^N / (t_j^N-t_{i}^N) \leq 1 / ( 1 + \gamma_{i+2}^N/\gamma_{i+1}^N) \leq 1 / (1+c) $ for $c > 0$ small enough. We conclude that 
 \begin{eqnarray*}
|T_2|&\leq &C  \sqrt{ \gamma_{i+1}^N} 
 (t_j^N-t_{i}^N)^{-1/2}
 \mathcal Q_{M-d-6} (t_j ^N-t_i ^N, \theta^N_{t_i ^N,t_j ^N}(y)-x ),
 \end{eqnarray*}
which shows \eqref{eq:bound MN2} for $k=2$. For $k=3$ we get by using \eqref{eqapp5} that 
\begin{eqnarray*}
T_3&\leq & \frac {a_N \gamma_{i+1}^N}  {t_j^N-t_{i+1}^N} \left | \theta^N_{t_i^N,t_j^N}(y)-x\right | \mathcal Q_{M-d-5} (t_j ^N-t_i ^N, \theta^N_{t_i ^N,t_j ^N}(y)-x ), 
\end{eqnarray*} which by arguments similar to the ones used for $k=2$ can be bounded by
\begin{eqnarray*}
 \frac {a_N \gamma_{i+1}^N}  {(t_j^N-t_{i+1}^N) ^{1/2}}  \mathcal Q_{M-d-6} (t_j ^N-t_i ^N, \theta^N_{t_i ^N,t_j ^N}(y)-x ).
\end{eqnarray*} 
This shows \eqref{eq:bound MN2} for $k=3$. For $k=4$ choose $\nu$ with $|\nu| =3$ and put
\begin{eqnarray*}
p_1(v) &=&  f^N_{t_i^N,\chi_{N}(x)\sqrt{\gamma_{i}^{N}}}(v), \\
p_2(v) &=& f^N_{t_i^N,\theta^N_{t_i^N,t_j^N}(y)}(v), \\
a_1(v) &=& (v + \delta_{1,N})^{\nu},\\
a_2(v) &=& (v + \delta_{2,N})^{\nu}
\end{eqnarray*}
with 
\begin{eqnarray*}
 \delta_{1,N}&=&   F_N(t_i^N,x) \chi_N(x) \sqrt {\gamma_{i+1} ^N}, \\
 \delta_{2,N}&=& \Delta_i\theta^N(y)\\
 &=&  \frac 1 {\sqrt{\gamma_{i+1}^N}} \int_{t_i^N}^{t_{i+1}^N} F_N(u, \theta^N_{u,t_j^N}(y))\chi_N(\theta^N_{u,t_j^N}(y))
 \mathrm d u.
\end{eqnarray*}
From the latter representation of $\delta_{2,N}$ one gets the following bounds with the help of \eqref{eqapp5}, of   \eqref{eq:lem3cent5b} in Lemma \ref{em2:boundstheta}, by using that $F_N$ is Lipshitz with respect to its first argument and that $\chi_N$ is absolutely bounded by $a_N$. 
\begin{eqnarray} \label{eq:mon1}
 \left | \delta_{2,N} -  \delta_{1,N}\right |&= & \tilde  \delta_{1,N} + \tilde  \delta_{2,N},\\
 \label{eq:mon2}  \left | \delta_{l,N}\right |&\leq & C a_N  \sqrt{\gamma_{i+1}^N} , \\
 |a_l(v)| &\leq & C (v +1)^{\nu}  \label{eq:mon3}
 \end{eqnarray}
 for $l=1,2$,
 where \begin{eqnarray*}
 \tilde  \delta_{1,N} &\leq &C \sqrt{\gamma_{i+1}^N} 
\left | x-\theta^N_{t_i^N,t_j^N}(y)\right |,\\  \tilde  \delta_{2,N} &\leq & Ca_N (\gamma_{i+1}^N)^{3/2}
 .\end{eqnarray*}
Furthermore, we put for $l=1,2$
\begin{eqnarray*}
b_l(v)&=&   \int_0^1 ( 1- u) ^2 D^\nu \tilde p_N^y\left (t_{i+1}^N, t_j^N,  x +u \sqrt {\gamma_{i+1} ^N} (v+\delta_{l,N}), y\right ) \mathrm d u .\end{eqnarray*}
We will show
\begin{eqnarray} \label{eq:sun1}
 \left |\int_{\R^d} (p_1(v) -p_2(v)) a_1(v) b_1(v) \mathrm d v \right |&\leq & C \frac { \sqrt{\gamma_{i+1}^N}}{t_j^N-t_i^N} 
 \mathcal Q_{M-d-6} (t_j ^N-t_i ^N, \theta^N_{t_i ^N,t_j ^N}(y)-x )\\ \label{eq:sun2}
 \left |\int_{\R^d} p_2(v) a_1(v) (b_1(v)- b_2(v))  \mathrm d v \right |&\leq & C \frac { \sqrt{\gamma_{i+1}^N}}{t_j^N-t_i^N} 
 \mathcal Q_{M-d-6} (t_j ^N-t_i ^N, \theta^N_{t_i ^N,t_j ^N}(y)-x ),\\ \label{eq:sun3}
 \left |\int_{\R^d} p_2(v) b_2(v) (a_1(v)-a_2(v)) \mathrm d v \right |&\leq & C \frac { \sqrt{\gamma_{i+1}^N}}{t_j^N-t_i^N} 
 \mathcal Q_{M-d-6} (t_j ^N-t_i ^N, \theta^N_{t_i ^N,t_j ^N}(y)-x ).\end{eqnarray}
 These three inequalities  imply that 
 \begin{eqnarray*}
 \left |\int_{\R^d} p_1(v)  a_1(v) b_1(v) \mathrm d v- \int_{\R^d} p_2(v)  a_2(v) b_2(v) \mathrm d v \right |&\leq & C \frac { \sqrt{\gamma_{i+1}^N}}{t_j^N-t_i^N} 
 \mathcal Q_{M-d-6} (t_j ^N-t_i ^N, \theta^N_{t_i ^N,t_j ^N}(y)-x ).\end{eqnarray*}
By summing both sides of the last inequality over $\nu$ with $|\nu| =3$ we get  \eqref{eq:bound MN2} for $k=4$. We now show \eqref{eq:sun1}--\eqref{eq:sun3}. We start with the proof of \eqref{eq:sun2}. For this proof we will use that
\begin{eqnarray} \label{eq:tue1}
 \left |b_1(v)- b_2(v) \right |&\leq & C \frac { \gamma_{i+1}^N}{(t_j^N-t_i^N) ^{3/2}}  (1+ |v| ) ^{M-d-5}
 \mathcal Q_{M-d-6} (t_j ^N-t_i ^N, \theta^N_{t_{i+1} ^N,t_j ^N}(y)-x ).\end{eqnarray}
 With this inequality and \eqref{eq:mon3} we get that 
 \begin{eqnarray*}
&& \left |\int_{\R^d} p_2(v) a_1(v) (b_1(v)- b_2(v))  \mathrm d v \right |\leq 
C \frac { \gamma_{i+1}^N}{(t_j^N-t_i^N) ^{3/2}}  \mathcal Q_{M-d-6} (t_j ^N-t_i ^N, \theta^N_{t_{i+1} ^N,t_j ^N}(y)-x )\\ && \qquad \times
\int_{\R^d}  f^N_{t_i^N,\theta^N_{t_i^N,t_j^N}(y)}(v) (1+ |v| ) ^{M-d-5+3}\mathrm d v \\
&& \leq C \frac { \gamma_{i+1}^N}{(t_j^N-t_i^N) ^{3/2}}  \mathcal Q_{M-d-6} (t_j ^N-t_i ^N, \theta^N_{t_{i+1} ^N,t_j ^N}(y)-x ),
\end{eqnarray*}
which implies \eqref{eq:sun2} because of $ \gamma_{i+1}^N \leq t_j^N-t_i^N$. For \eqref{eq:sun2} it remains to show \eqref{eq:tue1}. For this proof 
note that by \eqref{eq:mon1} and \eqref{eq:lem3cent2add} with $\delta_{N}(\tau^N) = \delta_{1,N} + \tau^N ( \delta_{2,N} - \delta_{1,N})$
\begin{eqnarray*} 
&& \left |b_1(v)- b_2(v) \right |\\
&& \qquad \leq 
  \int_0^1 ( 1- u) ^2 \left| D^\nu \tilde p_N^y\left (t_{i+1}^N, t_j^N,  x +u \sqrt {\gamma_{i+1} ^N} (v+\delta_{l,N}), y\right )\right . \\ && \qquad \quad  \left .  -D^\nu \tilde p_N^y\left (t_{i+1}^N, t_j^N,  x +u \sqrt {\gamma_{i+1} ^N} (v+\delta_{l,N}), y\right ) \right |\mathrm d u\\
  && \qquad \leq  \int_0^1 ( 1- u) ^2
  \left | u \sqrt{\gamma_{i+1}^N} \sum_{k=1} ^d (\delta_{2N}- \delta_{1,N})_k\right . \\
  && \qquad \qquad \left . \times  \int_0^1 D^{\nu+e_k} \tilde p_N^y\left (t_{i+1}^N, t_j^N,  x +u \sqrt {\gamma_{i+1} ^N} (v+\delta_{N}(\tau^N)), y\right )\mathrm d \tau^N \right | \mathrm d u\\
&&  \qquad \leq C \frac { \gamma_{i+1}^N}{(t_j^N-t_i^N) ^{2}}  \left ( \left |\theta^N_{t_{i+1} ^N,t_j ^N}(y)-x\right| + a_N  \gamma_{i+1}^N \right ) 
 \int_0^1 ( 1- u) ^2  \\ && \qquad \qquad \times \int_0^1 
 \mathcal Q_{M-d-6} (t_j ^N-t_i ^N, \theta^N_{t_{i+1} ^N,t_j ^N}(y)-x - u \sqrt {\gamma_{i+1} ^N} (v+\delta_{N}(\tau^N)) ) \mathrm d \tau^N\  \mathrm d u.\end{eqnarray*}
 Now by application of \eqref{eq:Qr1} we get that
 \begin{eqnarray*} 
&& \left |b_1(v)- b_2(v) \right |\\
&& \qquad \leq 
 C \frac { \gamma_{i+1}^N}{(t_j^N-t_i^N) ^{2}}  \left ( \left |\theta^N_{t_{i+1} ^N,t_j ^N}(y)-x\right| + a_N  \gamma_{i+1}^N \right )  
 \left ( 1+ |v|  + a_N  \sqrt{\gamma_{i+1}^N} \right )  ^{M-d-5}
 \int_0^1 ( 1- u) ^2  \\ && \qquad \qquad \times \int_0^1 
 \mathcal Q_{M-d-6} (t_j ^N-t_i ^N, \theta^N_{t_{i+1} ^N,t_j ^N}(y)-x - u \sqrt {\gamma_{i+1} ^N} \tau^N \tilde  \delta_{1,N}) ) \mathrm d \tau^N\  \mathrm d u.\end{eqnarray*}
 Claim \eqref{eq:mon3} now follows by application of  \eqref{eq:Qr2}.
 
 We now show  \eqref{eq:sun3}. Note that by application of \eqref{eq:lem3cent2add}, \eqref{eq:Qr1}
 
  \begin{eqnarray*} 
&&  \left |\int_{\R^d} p_2(v) b_2(v) (a_1(v)-a_2(v)) \mathrm d v \right |\\
&& \qquad \leq C \int_{\R^d} 
f^N_{t_i^N,\theta^N_{t_i^N,t_j^N}(y)}(v)
\int_0^1 ( 1- u) ^2 \left |D^\nu \tilde p_N^y\left (t_{i+1}^N, t_j^N,  x +u \sqrt {\gamma_{i+1} ^N} (v+\delta_{2,N}), y\right ) \right | \mathrm d u
 \\
 &&  \qquad \qquad \times \left | (v + \delta_{1,N})^{\nu} - (v + \delta_{2,N})^{\nu}  \right  | \mathrm d v\\
&& \qquad \leq C  (t_j^N-t_{i+1}^N) ^{-3/2} \int_{\R^d} 
f^N_{t_i^N,\theta^N_{t_i^N,t_j^N}(y)}(v) \mathcal Q_{M-d-5} (t_j ^N-t_i ^N, \theta^N_{t_{i+1} ^N,t_j ^N}(y)-x )  \\
 &&  \qquad \qquad \times
(1+|v |+ \delta_{2,N} ) ^{M-d-5} \left ( |v|^2 |\delta_{1N} - \delta_{2,N}|+ |v| |\delta_{1N} ^2- \delta_{2,N}^2|+ |\delta_{1N}^3- \delta_{2,N}^3|\right )
 \mathrm d v\\
&& \qquad \leq C  (t_j^N-t_{i+1}^N) ^{-3/2} 
\mathcal Q_{M-d-5} (t_j ^N-t_i ^N, \theta^N_{t_{i+1} ^N,t_j ^N}(y)-x )  \\
 &&  \qquad \qquad \times
\sqrt{\gamma_{i+1}^N}  \left ( |\theta^N_{t_{i+1} ^N,t_j ^N}(y)-x| + \gamma_{i+1}^N\right ),
\end{eqnarray*}
 where in the last step \eqref{eq:mon1}--\eqref{eq:mon2} has been applied. Claim  \eqref{eq:sun3} now follows by application of \eqref{eq:Qr2}. We now prove  \eqref{eq:sun1}.
 For the proof of this claim first note  that by our assumptions and by \eqref{eq:lem3cent2} and \eqref{eq:Qr1}:
 \begin{eqnarray*} 
&&|p_1(v)-p_2(v)|=  \left | f^N_{t_i^N,\chi_{N}(x)\sqrt{\gamma_{i}^{N}}}(v) - f^N_{t_i^N,\theta^N_{t_i^N,t_j^N}(y)}(v) \right |\\
&& \quad \leq C \sqrt{\gamma_{i+1}^N} 
 \left | \theta^N_{t_i^N,t_j^N}(y) - x \right |Q_m(v),
\\
&&|b_1(v)|\leq  \int_0^1 ( 1- u) ^2 \left |D^\nu \tilde p_N^y\left (t_{i+1}^N, t_j^N,  x +u \sqrt {\gamma_{i+1} ^N} (v+\delta_{l,N}), y\right ) \right | \mathrm d u \\
&& \quad \leq C (t_j^N-t_{i+1}^N) ^{-3/2}  \int_0^1 ( 1- u) ^2  \mathcal Q_{M-d-5} (t_j ^N-t_i ^N, \theta^N_{t_{i+1} ^N,t_j ^N}(y)-x - u \sqrt {\gamma_{i+1} ^N} (v+\delta_{2,N}))  \mathrm d u\\
&& \quad \leq C (t_j^N-t_{i+1}^N) ^{-3/2} (1+|v|)^ {M-d-5}   \mathcal Q_{M-d-5} (t_j ^N-t_i ^N, \theta^N_{t_{i+1} ^N,t_j ^N}(y)-x   ) .
\end{eqnarray*}
Claim \eqref{eq:sun1} now follows by combining these bounds with the bound \eqref{eq:mon3} for $a_1(v)$ and by applying \eqref{eq:Qr2}.

This concludes the proof of  \eqref{eq:bound MN} for $j-i > 1$. It remains to show \eqref{eq:bound MN} for the case $j= i+1$. 
 Because of \eqref{eq:boundKN} we have to show that 
  \begin{eqnarray} \label{eq:wed1}
\mathcal K_N (t_i^N, t_{i+1}^N, x,y) &\leq & C \frac 1 { \sqrt{\gamma_{i+1}^N}}
 \mathcal Q_{M-d-6} (t_{i+1} ^N-t_i ^N, \theta^N_{t_i ^N,t_{i+1} ^N}(y)-x ). 
 \end{eqnarray}
{Note that 
\begin{eqnarray*} 
&&\mathcal K_N (t_i^N, t_{i+1}^N, x,y) = (\mathcal L_N -\tilde {\mathcal L}_N) \tilde p_N (t_i^N, t_{i+1}^N, x,y)\\
&& \qquad = \frac 1 {\gamma_{i+1}^N} \left (  p_N (t_i^N, t_{i+1}^N, x,y) - \tilde p_N (t_i^N, t_{i+1}^N, x,y)\right )\\
&& \qquad = \frac 1 {\gamma_{i+1}^N} \left ( \left(\gamma_{i+1}^N\right) ^{-d/2} f^N_{t_i^N,\chi_{N}(x)\sqrt{\gamma_{i}^{N}}} \left ( \frac {y-x - \int_{t_i^N}^{t_{i+1} ^N} F_N\left ( u, \theta^N_{u, t_{i+1} ^N}(y)\right) \chi_N ( \theta_{u, t_{i+1} }^N(y)) \mathrm d u} {\sqrt{\gamma_{i+1}^N}}\right  ) \right .\\
&&  \qquad \qquad- \left .  \left(\gamma_{i+1}^N\right) ^{-d/2} f^N_{t_i^N,\chi_{N}(x)\sqrt{\gamma_{i}^{N}}} \left ( \frac {y-x -  F_N\left ( t_{i} ^N, \theta^N_{u, t_{i+1} ^N}(y)\right) \chi_N ( x) \gamma_{i+1}^N} {\sqrt{\gamma_{i+1}^N}}\right  ) \right ).
 \end{eqnarray*}
 We now use that $\int_{t_i^N}^{t_{i+1} ^N} F_N\left ( u, \theta^N_{u, t_{i+1} ^N}(y)\right) \chi_N ( \theta^N_{u, t_{i+1} ^N}(y)) \mathrm d u =  \theta^N_{t_{i+1} ^N, t_{i+1} ^N}(y) - \theta^N_{t_{i} ^N, t_{i+1} ^N}(y) =  y - \theta^N_{t_{i} ^N, t_{i+1} ^N}(y) $. Thus with 
 \begin{eqnarray*}  \Delta_i(x,y) = \frac 1 {\sqrt {\gamma_{i+1}^N}}   \int_{t_i^N}^{t_{i+1} ^N} \left [ F_N\left ( u, \theta^N_{u, t_{i+1} ^N}(y)\right) \chi_N ( \theta^N_{u, t_{i+1} ^N}(y)) - F_N\left ( t_{i} ^N,x\right) \chi_N ( x) \right ]\mathrm d u
\end{eqnarray*}
we can write 
\begin{eqnarray*}
\left| \mathrm{\Delta}_{i}(x,y) \right| &\leq& \frac{1}{\sqrt{\gamma_{i + 1}^{N}}}\int_{t_{i}^{N}}^{t_{i + 1}^{N}}{\left| F_{N}(u,\theta_{u,t_{i + 1}^{N}}^{N}(y))\chi_{N}(\theta_{u,t_{i + 1}^{N}}^{N}(y))\  - F_{N}(u,x)\chi_{N}(x) \right|du}\\
&& \qquad + \frac{1}{\sqrt{\gamma_{i + 1}^{N}}}\int_{t_{i}^{N}}^{t_{i + 1}^{N}}{\left| \chi_{N}(x) \right|  \left| F_{N}(u,x)\  - F_{N}(t_{i}^{N},x) \right|}du \\
& \leq& \frac{L}{\sqrt{\gamma_{i + 1}^{N}}}\int_{t_{i}^{N}}^{t_{i + 1}^{N}}{\left| \theta_{u,t_{i + 1}^{N}}^{N}(y)\  - x \right|du}\\
&& \qquad + C\ a_{N}{ (\gamma_{i + 1}^{N})}^{3/2}\\
&\leq& L\sqrt{\gamma_{i + 1}^{N}}\ \left| \theta_{t_{i}^{N},t_{i + 1}^{N}}^{N}(y)\  - x \right| + \frac{L}{\sqrt{\gamma_{i + 1}^{N}}}\int_{t_{i}^{N}}^{t_{i + 1}^{N}}{\left| \theta_{u,t_{i + 1}^{N}}^{N}(y)\  - \theta_{t_{i}^{N},t_{i + 1}^{N}}^{N}(y)\  \right|du} \\
&& \qquad + C\ a_{N}{ (\gamma_{i + 1}^{N})}^{3/2} \\
& \leq& L\sqrt{\gamma_{i + 1}^{N}}\ \left| \theta_{t_{i}^{N},t_{i + 1}^{N}}^{N}(y)\  - x \right| + \frac{Ca_{N}}{\sqrt{\gamma_{i + 1}^{N}}}\int_{t_{i}^{N}}^{t_{i + 1}^{N}}{(u\  - t_{i}^{N})du\  +}C\ a_{N}{ (\gamma_{i + 1}^{N})}^{3/2} \\
&\leq& L\sqrt{\gamma_{i + 1}^{N}}\ \left| \theta_{t_{i}^{N},t_{i + 1}^{N}}^{N}(y)\  - x \right| + C\ a_{N}{ (\gamma_{i + 1}^{N})}^{3/2}.\end{eqnarray*}
Now write
\begin{eqnarray*} 
\mathcal K_N (t_i^N, t_{i+1}^N, x,y) = A + B,
 \end{eqnarray*}
where
\begin{eqnarray*} 
A &=& \left(\gamma_{i+1}^N\right) ^{-1-d/2}\left (  f^N_{t_i^N,\chi_{N}(x)\sqrt{\gamma_{i}^{N}}} \left ( \frac {\theta^N_{t_{i} ^N, t_{i+1} ^N}(y)-x} {\sqrt{\gamma_{i+1}^N}}+ \Delta_i(x,y)\right  ) -  f^N_{t_i^N,\chi_{N}(x)\sqrt{\gamma_{i}^{N}}} \left ( \frac {\theta^N_{t_{i} ^N, t_{i+1} ^N}(y)-x} {\sqrt{\gamma_{i+1}^N}}\right  ) \right ),\\
B &=& \left(\gamma_{i+1}^N\right) ^{-1-d/2}\left (  f^N_{t_i^N,\chi_{N}(x)\sqrt{\gamma_{i}^{N}}} \left ( \frac {\theta^N_{t_{i} ^N, t_{i+1} ^N}(y)-x} {\sqrt{\gamma_{i+1}^N}}\right  ) -  f^N_{t_i^N,\chi_{N}(\theta ^N_{t_{i} ^N,t_{i+1} ^N}(y))\sqrt{\gamma_{i}^{N}}} \left ( \frac {\theta^N_{t_{i} ^N, t_{i+1} ^N}(y)-x} {\sqrt{\gamma_{i+1}^N}}\right  ) \right ). 
 \end{eqnarray*}
 Now, with similar arguments as for $j-i > 1$ one gets
 \begin{eqnarray*} 
|A| &\leq& {(\gamma_{i + 1}^{N})}^{- 1 - d/2}\left| \sum_{|\nu| = 1}^{}{\mathrm{\Delta}_{i}(x,y)}^{\nu}\int_{0}^{1}D^{\nu}f^N_{t_{i}^{N},x}\left( \frac{\theta_{t_{i}^{N},t_{i + 1}^{N}}^{N}(y)\  - x}{\sqrt{\gamma_{i + 1}^{N}}}\  + \delta\mathrm{\Delta}_{i}(x,y) \right)d\delta \right| \\ &\leq& C\ { (\gamma_{i + 1}^{N})}^{- 1 - d/2}\left( \sqrt{\gamma_{i + 1}^{N}}\ \left| \theta_{t_{i}^{N},t_{i + 1}^{N}}^{N}(y)\  - x \right| + \ a_{N}{ (\gamma_{i + 1}^{N})}^{3/2} \right)\\ && \qquad
\times \int_{0}^{1}{\mathcal{Q}_{M}\left( \frac{\left| \theta_{t_{i}^{N},t_{i + 1}^{N}}^{N}(y)\  - x \right|}{\sqrt{\gamma_{i + 1}^{N}}}\  + \delta\mathrm{\Delta}_{i}(x,y) \right)d\delta}\\
& \leq & C{ (\gamma_{i + 1}^{N})}^{- d/2}\ \left( \ \frac{\left| \theta_{t_{i}^{N},t_{i + 1}^{N}}^{N}(y)\  - x \right|}{\sqrt{\gamma_{i + 1}^{N}}} + \ a_{N}{ (\gamma_{i + 1}^{N})}^{1/2} \right)  \int_{0}^{1}{\mathcal{Q}_{M}\left( \frac{\left| \theta_{t_{i}^{N},t_{i + 1}^{N}}^{N}(y)\  - x \right|}{\sqrt{\gamma_{i + 1}^{N}}}\   (1 - L\delta\sqrt{\gamma_{i + 1}^{N}}\ ) \right)d\delta} \\ &\leq& C{ (\gamma_{i + 1}^{N})}^{- d/2}\left( \ \frac{\left| \theta_{t_{i}^{N},t_{i + 1}^{N}}^{N}(y)\  - x \right|}{\sqrt{\gamma_{i + 1}^{N}}} + \ a_{N}{ (\gamma_{i + 1}^{N})}^{1/2} \right)\mathcal{Q}_{M}\left( \frac{\left| \theta_{t_{i}^{N},t_{i + 1}^{N}}^{N}(y)\  - x \right|}{\sqrt{\gamma_{i + 1}^{N}}}\  \right) 
\\
&\leq& C{ (\gamma_{i + 1}^{N})}^{- d/2}\mathcal{Q}_{M - 1}\left( \frac{\left| \theta_{t_{i}^{N},t_{i + 1}^{N}}^{N}(y)\  - x \right|}{\sqrt{\gamma_{i + 1}^{N}}}\  \right)\\
&: =& C\ \mathcal{Q}_{M - 1}(t_{i + 1}^{N}\  - \ t_{i}^{N},\ \theta_{t_{i}^{N},t_{i + 1}^{N}}^{N}(y)\  - x).
 \end{eqnarray*}
 With our assumption that} $| f^N_{t,x}(z) - f^N_{t,y}(z) | \leq C  |x-y| Q_M(z)$ we get
 for $B$ the following bound:
 \begin{eqnarray*} 
&&|B |\leq  \left(\gamma_{i+1}^N\right) ^{-1-d/2}\left | \theta^N_{t_{i} ^N, t_{i+1} ^N}(y)-x  \right |
Q_M\left ( \frac {\theta^N_{t_{i} ^N, t_{i+1} ^N}(y)-x} {\sqrt{\gamma_{i+1}^N}}\right  )
\\
&& \qquad \leq C \left(\gamma_{i+1}^N\right) ^{-(d+1)/2} Q_{M-1}\left ( \frac {\theta^N_{t_{i} ^N, t_{i+1} ^N}(y)-x} {\sqrt{\gamma_{i+1}^N}}\right  )
\\
&& \qquad \leq C \left(\gamma_{i+1}^N\right) ^{-1/2}\mathcal Q_{M-1}\left (t_{i+1} ^N - t_{i} ^N, \theta^N_{t_{i} ^N, t_{i+1} ^N}(y)-x \right  )
.
 \end{eqnarray*}
Our bounds on $A$ and $B$ imply \eqref{eq:wed1}. Thus we have also that the inequality 
 \eqref{eq:bound MN}  holds for the case $j= i+1$.  \hfill $\square$

\end{document}